\documentclass[a4paper,12pt,reqno]{article}
\usepackage[english]{babel}
\usepackage{amsmath,amsfonts,amssymb,amsthm,bbm}
\usepackage{graphics,epsfig,psfrag} 
\usepackage{paralist,subfigure,multirow,float}
\usepackage{mathrsfs}

\usepackage{changes}

\usepackage[sort]{natbib}   
\usepackage{hyperref} 
\usepackage[active]{srcltx} 
\usepackage{xcolor,soul} 
\usepackage[latin1]{inputenc}
\usepackage[OT1]{fontenc}

\newcommand{\commentout}[1]{}

\DeclareMathOperator{\dist}{dist}

\DeclareMathOperator\supp{supp}

\DeclareMathOperator*{\essinf}{ess{\,}inf}

\def\N{\mathbb{N}}

\def\R{\mathbb{R}}

\def\bfS{\mathbb{S}}

\let\O=\Omega
\let\e=\varepsilon
\let\vp=\varphi

\let\t=\tilde
\let\ol=\overline
\let\ul=\underline
\let\.=\cdot
\let\0=\emptyset

\let\mc=\mathcal

\def\1{\mathbbm{1}}

\def\thm#1{Theorem~\ref{thm:#1}}
\def\seq#1{(#1_n)_{n\in\N}}

\newcommand{\loc}{{\text{loc}}}
\newcommand{\Vol}{{\text{Vol}}}

\newcommand{\glSurv}{\text{glSurv}}
\newcommand{\locSurv}{\text{locSurv}}

\newcommand{\Tm}{{\mathbb{T}}}

\newcommand{\bfA}{{\bf{A}}}

\newcommand{\bfP}{{\bf{P}}}

\newcommand{\calE}{{\mathcal{E}}}

\newcommand{\Nm}{{\mathbb{N}}}

\newcommand{\bfH}{{\mathbb{H}}}
\newcommand{\bfC}{{\bf{C}}}

\newcommand{\Rm}{{\mathbb R}}

\newcommand{\Pm}{{\mathbb P}}

\newcommand{\expE}{{\mathbb E}}
\newcommand{\Ind}{{\mathbbm{1}}}

\newtheorem{theo}{Theorem}[section]
\newtheorem{lem}[theo]{Lemma}
\newtheorem{defin}[theo]{Definition}
\newtheorem{question}[theo]{Question}

\newtheorem{prop}[theo]{Proposition}
\newtheorem{cor}[theo]{Corollary}
\newtheorem{rmk}[theo]{Remark}

\newtheorem{assum}[theo]{Assumption}
\newtheorem{prob}[theo]{Problem}
\newtheorem{example}[theo]{Example}

\newcommand{\disc}{{\text{disc}}}
\newcommand{\cou}{{\text{count}}}

\long\def\metanote#1#2{{\color{#1}\
\ifmmode\hbox\fi{\sffamily\mdseries\upshape [#2]}\ }}
\long\def\OT#1{\metanote{blue!70!black}{{\tiny OT} #1}}

\newcommand{\ra}{\rightarrow}

\newcommand{\be}{\begin{equation}}
\newcommand{\ee}{\end{equation}}
\newcommand{\bi}{\begin{itemize}}
\newcommand{\ei}{\end{itemize}}

\newcommand{\eps}{\mbox{$\epsilon$}}

\newenvironment{formula}[1]{\begin{equation}\label{#1}}
{\end{equation}\noindent}

\def\Fi#1{\begin{formula}{#1}}
	\def\Ff{\end{formula}\noindent}

\newenvironment{nota}[1]{\textcolor{red}
	*\marginpar{\footnotesize{\textcolor{red}{#1}}}}

\def\BS{\color{orange}}

\newenvironment{Luca}
{\begin{center}
		\begin{tabular}{|p{0.95\textwidth}|}\hline\color{blue}}
		{ \vspace{5pt}\color{orange}\\\hline\end{tabular} 
\end{center}}

\def\pe{principal eigenvalue}
\def\pf{principal eigenfunction}

\def\SMP{strong maximum principle}

\def\L{\mathcal{L}}

\def\l{\lambda_1}

\renewcommand{\epsilon}{\varepsilon}
\newcommand{\br}{{\beta}}

\title{\bf{Sharp dimension threshold for unique FKPP steady states and BBM recurrence}}
\author{Luca Rossi\thanks{Istituto ``G.~Castelnuovo'', Sapienza Universit\`a di Roma, 
P.le A.~Moro, 5, Roma, Italy. E-mail: \texttt{l.rossi@uniroma1.it}} 
 \ and Oliver Tough\thanks{Department of Mathematical Sciences, Durham University, Upper Mountjoy Campus, Stockton Rd, Durham DH1 3LE. E-mail: \texttt{oliver.kelsey-tough@durham.ac.uk}}}

\begin{document}

\maketitle

\begin{center}
\emph{Dedicated to the memory of Misha Menshikov.}
\end{center}

\begin{abstract}
We consider the existence and uniqueness of positive, bounded stationary solutions of the Fisher-KPP 
equation with Dirichlet boundary conditions. We establish that stationary solutions are unique, and exist if and only if the generalised principal eigenvalue is strictly positive (supercriticality), in dimension at most $6$. This is sharp: we exhibit examples both of non-uniqueness and of existence at criticality in dimension $7$ or higher, including for the classical reaction $f(u)=u-u^2$. This proves a conjecture of Berestycki and Graham in the affirmative for dimension at most $6$, and in the negative for dimension at least $7$. We thereby show that branching Brownian motion cannot survive globally without surviving locally in dimension $6$ or lower, but can do so in dimension $7$ or higher, mirroring the classical transition of Brownian motion from recurrence to transience from dimension $2$ to $3$. We further show that relaxing the strong-KPP condition results in a more general sharp dimension threshold for the uniqueness/non-existence result. These results are valid without any regularity assumptions on the domain boundary. As a corollary, we show that solutions of the evolution reaction-diffusion equation are always convergent in the long run in dimension at most $6$, whilst we exhibit 
non-convergent solutions in dimension larger than $6$.
\end{abstract}

\tableofcontents

\section{Introduction and main results}

The question of the existence and uniqueness of positive, bounded solutions of 
semilinear elliptic equations in divergence form,
under different types of boundary conditions and structural assumptions on the nonlinearity, has received a great deal of attention both classically and contemporaneously. We will answer these questions for the Dirichlet problem 
and KPP-type nonlinearity. We will then apply this to 
the long-time behaviour of the corresponding 
reaction-diffusion equation and branching Brownian motion, see Sections \ref{sec:evolution}, \ref{section:probabilistic results} respectively for the results in this direction.

More precisely, we deal with the Dirichlet problem
\Fi{s-a}
\left\{\begin{array}{ll}
\partial_i(a_{ij}(x)\partial_j u)+f(x,u)=0, & x\in D,\\[5pt]
u(x)\overset{\xi}{=}0, & x\in\partial D.
\end{array}\right.
\Ff 
The domain $D$ will typically be unbounded, with no regularity assumptions on $\partial D$. 
Because of this, the boundary condition is 
understood in a relaxed sense, that is the one of Berestycki-Nirenberg-Varadhan \cite{BNV1994}, by means of a barrier function $\xi$ (whence the notation
$\overset{\xi}{=}$), which we make precise in Section~\ref{section:Barrier}.
Suffice it to say, if $\partial D$ is locally Lipschitz, this reduces to $u$ vanishing continuously along $\partial D$. As a matter of fact, our results are new even when $D$ is the whole space, in which case one 
neglects the second line of \eqref{s-a}.

It has been conjectured by Berestycki and Graham in 
\cite[Conjecture 1.3]{Berestycki2025} that, when $(a_{ij})$ is the identity matrix and $f$ is independent of $x$ and satisfies the strong-KPP condition
(see \eqref{eq:strongKPP f x} below),
then the solution of \eqref{s-a} is unique. We will show that this is true when the dimension is up to an explicit threshold, which depends on the nonlinearity $f$. 
Conversely, we will exhibit multiplicity of solutions 
for the problem~\eqref{s-a} for any dimension above that threshold, including $(a_{ij})$ is the identity and $f(x,u)=u-u^2$.

We will impose the following standing assumption throughout.
\begin{assum}\label{assum:standing}
The domain $ D$ is a non-empty, open, connected subdomain\commentout{\nota{\BS subset? domain=connected for me. What is 
$C^1(\bar  D)$ if $D$ is not regular?}} of~$\Rm^d$. The coefficient $(a_{ij})$ is uniformly elliptic, bounded and $C^1(\bar  D)$. Regarding the reaction term, we assume that $f\in C^1(D\times \Rm_{\geq 0})$, and \commentout{\nota{Increased regularity, attempted to keep simple. Need $\partial_sf$ cts at $0$ just for persistence, don't need cts elsewhere. Need H{\"o}lder regularity of $f$ for Schauder theory. 
{\BS We could write $\|\partial_s f\|_{L^\infty(D\times(0,S'))}$.}
Old assumption: Concerning the regularity of $f$, we assume that $s\mapsto f(x,s)$ is in $C^1([0,\delta))$ for some $\delta>0$ (which doesn't depend upon $x$) and that it is locally Lipschitz continuous, uniformly with respect to $x\in D$. }}
 \Fi{0}f(x,0)=0\quad\text{ for
all}\quad x\in D,\Ff 
\Fi{S}\exists S>0\ \text{ such that}\quad
\sup_{x\in D}f(x,s)< 0\quad \text{ for all  \ $s> S$,}\Ff 
\Fi{fLip}\sup_{\substack{x\in  D,\\ 0\leq s< S'}}\lvert \partial_sf(x,s)\rvert 
 <\infty\quad\text{ for all \ $S'>0$.}\Ff 
\end{assum}

\begin{rmk}\label{rmk:generalising f Holder}
One can replace in all of our results the assumption that $f\in C^1(D\times \Rm_{\geq 0})$ with the more general but less straightforward assumptions that $D\times \Rm_{\geq 0}\ni (x,s)\ra f(x,s)$ is locally H{\"o}lder continuous (meaning that both the H{\"o}lder exponent and constant can depend upon the choice of non-empty, pre-compact neighbourhood), $s\mapsto f(x,s)$ is in $C^1(\Rm_{\geq 0})$ for all $x\in D$, and $\partial_sf\in C^0(D\times \Rm_{\geq 0})$, as these are all that we will use. We will employ the latter generalisation in Section \ref{section:probabilistic proofs}.
\end{rmk}

Any non-negative and bounded solution of \eqref{s-a} is either everywhere $0$, or everywhere positive, by the 
strong maximum principle. 
We focus on the latter case. 
Assumption \eqref{S} implies that bounded solutions are smaller than or equal to $S$, along
with the existence of a 
{\em maximal} solution, see Proposition \ref{prop:maximal solution} below.

Throughout the paper, we denote
\[
Lu:=\partial_{i}(a_{ij}(x)\partial_{j} u).
\]
To examine the question of existence, we introduce the linearised operator about $0$ associated with
\eqref{s-a},
$$\L u:=\partial_{i}(a_{ij}(x)\partial_{j} u)+\partial_s f(x,0)u,$$
so that $\L=L+\partial_sf(x,0)$. Note that these can readily be put into non-divergence form, as $a\in C^1(\bar D)$. 
We then consider the notion of the generalised principal eigenvalue, introduced by 
Berestycki, Nirenberg and Varadhan in \cite{BNV1994}, which, for a given
elliptic operator $\L$, reads \footnote{Opposing sign conventions for $\lambda$ are employed in PDE and probability, so that whereas \eqref{eq:gen prin evalue} is the definition common in PDEs, which we use here, the typical definition employed in probability differs from this by a factor of $-1$.}
\begin{equation}\label{eq:gen prin evalue}
\lambda(-\L):=\sup\{\lambda\in \Rm:\exists \phi\in W^{2,d}_{\text{loc}}( D)\text{ such that $-\L \phi\geq \lambda \phi$ and $\phi>0$ in $D$}\}.
\end{equation}
When $D$ is bounded and smooth, this notion coincides with the classical 
principal eigenvalue of $-\L$ in $D$, under Dirichlet boundary condition.
We will sometimes consider the generalised \pe\ on a subdomain $ D'$ of $D$, 
that we will denote by $\lambda(-\L; D')$.

\commentout{
\begin{rmk}[Generalised principal eigenvalue in PDEs versus probability]
The definition \eqref{eq:gen prin evalue} follows the sign convention adopted in the PDE literature, rather than the convention typically followed in the probabilistic literature. In the probabilistic literature, one instead defines \cite{}
\[
\lambda(\L):=\inf\{\lambda\in \Rm:\exists u\in W^{2,d}_{\text{loc}}( D)\text{ such that $\L u\leq \lambda u$, $u(x)>0$ for all $x\in  D$}\},
\]
which differs from \eqref{eq:gen prin evalue} by a factor of $-1$. The probabilistic literature follows this convention so that positive $\lambda$ corresponds to growth (loosely speaking) and negative $\lambda$ decay.

Since our main results are PDE results, we instead follow the PDE convention, i.e. the definition in \eqref{eq:gen prin evalue}. Therefore for us, $\lambda>0$ corresponds to decay and $\lambda<0$ corresponds to growth.
\end{rmk}}

\subsection{Dimension threshold for unique steady states}
In the following and throughout this article, $\ell>0$ is a parameter. We will specify in our uniqueness results that $\ell\geq 1$, since the condition there cannot be satisfied for $\ell\in (0,1)$. To characterise the existence of positive, bounded solutions to \eqref{s-a}, we will need to consider the following condition:
\commentout{
\begin{equation}\label{eq:pve fs}
\inf_{x\in  D}\partial_sf(x,0)>0,
\end{equation}
and}
\begin{equation}\label{eq:existence condition}
\text{there exists $\epsilon>0$ such that \;$f(x,s)\leq \partial_sf(x,0)s-\epsilon s^{1+\ell}$ \; for $x\in  D$,
$s>0$.}
\end{equation}
This strengthens the weak KPP condition, given here by \eqref{eq:weakKPP f},
and prevents in particular $f$ from being linear.

We prove the following.
\begin{theo}\label{theo:nonexistence at criticality}
Suppose that Assumption~\ref{assum:standing} holds.
Suppose further that $f$ satisfies \eqref{eq:existence condition} for
some $\ell>0$ and that the space dimension satisfies $d\leq 2+\frac{4}{\ell}$. 
Then \eqref{s-a} admits a positive, bounded solution if and only if $\lambda(-\mathcal{L})<0$.
\end{theo}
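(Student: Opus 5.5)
The plan has two directions. The \emph{sufficiency} of $\lambda(-\mathcal L)<0$ is classical and does not use the dimension. The \emph{necessity} is where $d\le 2+4/\ell$ enters, through an integral (energy) estimate driven by the nonnegativity of the quadratic form of $-\mathcal L$.

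\emph{Sufficiency.} Assume $\lambda(-\mathcal L)<0$. By the approximation property of the generalised principal eigenvalue (Berestycki--Nirenberg--Varadhan), there are a smooth bounded ball $B\Subset D$ and its Dirichlet principal eigenpair $(\lambda_B,\varphi_B)$ with $\lambda_B<0$. Since $f\in C^1$ and $f(x,0)=0$, for $\kappa>0$ small enough $\kappa\varphi_B$, extended by $0$ outside $B$, is a generalised subsolution of \eqref{s-a}. By \eqref{S}, the constant $S$ is a supersolution. Monotone iteration between these (or Proposition~\ref{prop:maximal solution}, whose maximal solution lies above $\kappa\varphi_B$) gives a positive bounded solution satisfying the relaxed boundary condition. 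This is routine.

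\emph{Necessity.} Let $u$ be a positive bounded solution and suppose, for contradiction, that $\lambda(-\mathcal L)\ge0$. By \eqref{eq:existence condition},
\[
\mathcal L u=Lu+\partial_sf(x,0)u=\partial_sf(x,0)u-f(x,u)\ge \varepsilon u^{1+\ell}\quad\text{in }D,
\]
so $u$ is a subsolution with a superlinear gain. The hypothesis $\lambda(-\mathcal L)\ge0$ yields, via the Agmon--Allegretto--Piepenbrink correspondence (a positive $\phi$ with $-\mathcal L\phi\ge0$, hence a ground-state identity), that the quadratic form is nonnegative:
\[
Q(v):=\int_D a_{ij}\partial_iv\,\partial_jv-\partial_sf(x,0)v^2\ge0 \quad\text{for all } v\in H^1_0 \text{ with compact support in } D.
\]

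The central computation takes $\zeta=\zeta_R^m$, with $\zeta_R$ a cutoff equal to $1$ on $B_R$, supported in $B_{2R}$, with $|\nabla\zeta_R|\le C/R$ and $m$ large. Testing $\mathcal Lu\ge\varepsilon u^{1+\ell}$ against $u\zeta^2$ and using the identity $\int a\nabla(u\zeta)\cdot\nabla(u\zeta)=\int a\nabla u\cdot\nabla(u\zeta^2)+\int u^2a\nabla\zeta\cdot\nabla\zeta$, I expect
\[
\varepsilon\int u^{2+\ell}\zeta^2\le -Q(u\zeta)+C\int u^2|\nabla\zeta|^2\le C R^{-2}\int_{B_{2R}\setminus B_R}u^2\zeta_R^{2m-2}.
\]
Hölder with exponents $\tfrac{2+\ell}{2}$ and $\tfrac{2+\ell}{\ell}$, with $m$ chosen so the powers of $\zeta_R$ match, then gives
\[
\int u^{2+\ell}\zeta^2\le C\Bigl(\int_{A_R} u^{2+\ell}\zeta^2\Bigr)^{\frac{2}{2+\ell}}R^{-2+\frac{d\ell}{2+\ell}},\qquad A_R:=B_{2R}\setminus B_R,
\]
and therefore $\int_{B_R}u^{2+\ell}\le CR^{\,d-2(2+\ell)/\ell}$. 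The exponent $d-2-4/\ell$ is $\le0$ exactly when $d\le 2+4/\ell$.

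If the exponent is negative, letting $R\to\infty$ gives $u\equiv0$ directly. In the borderline case it gives $u\in L^{2+\ell}(D)$. Then $\int_{A_R}u^{2+\ell}\to0$, and re-inserting this into the Hölder inequality, where the power of $R$ is now $O(1)$, again forces $\int_D u^{2+\ell}=0$. Either way this contradicts $u>0$, so $\lambda(-\mathcal L)<0$.

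\emph{Main obstacle.} The hardest step is justifying the test function $u\zeta^2$, and $u\zeta\in H^1_0$ in $Q$, when $\partial D$ is irregular and the boundary condition holds only in the barrier sense $\overset{\xi}{=}$. I would first replace $u$ by $u_\delta:=(u-\delta)^+$. The barrier condition should make $\operatorname{supp}u_\delta\cap\operatorname{supp}\zeta$ compact in $D$. Moreover $u_\delta$ is still a weak subsolution, since the max of subsolutions is one and $\mathcal L u\ge\varepsilon u^{1+\ell}\ge\varepsilon u_\delta^{1+\ell}$ on $\{u>\delta\}$ up to the harmless term $\partial_sf(x,0)\delta$, handled via a Kato-type inequality. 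One then runs the estimate for $u_\delta$ and lets $\delta\to0$. A secondary point is deriving $Q\ge0$ from $\lambda(-\mathcal L)\ge0$ in $W^{2,d}_{\mathrm{loc}}$ without regularity of $D$. Here I would use the ground-state identity $Q(\phi w)=\int\phi^2a\nabla w\cdot\nabla w-\int(\mathcal L\phi)\phi w^2$ with a positive $\phi$ satisfying $-\mathcal L\phi\ge0$, which is valid for compactly supported $w$.
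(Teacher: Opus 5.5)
Your core estimate is correct, and it is leaner than the paper's. Discarding $Q(u\zeta)\ge0$ gives $\varepsilon\int u^{2+\ell}\zeta^2\le\int u^2|\nabla\zeta|_A^2$. Taking $\zeta=\zeta_R^m$ with $m\ell\ge 2+\ell$ then lets H\"older close on the same weight, which yields $\int_{B_R}u^{2+\ell}\le CR^{d-2-4/\ell}$ and hence $u\equiv0$ (after the extra $L^{2+\ell}$ step in the borderline case). The paper instead keeps the ground-state term $\int\chi_R^2\vp_n^2|\nabla(u_n/\vp_n)|_A^2$ in \eqref{eq:inequality 1 for proof}; after Young's inequality that line is exactly your inequality for $u_n$ with $\zeta=\chi_R$. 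Because it uses a single power of the cut-off, it then needs the doubling Lemma~\ref{lem:technical lemma g}, a quadratic inequality, a telescoping argument in $R$, and a limiting ground state to show $u/\vp$ is constant. Your argument avoids all of that.

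The gap is the boundary step you flag: your proposed resolution fails in the generality of the statement. The condition $u\overset{\xi}{=}0$ only forces $u(x_n)\to0$ along bounded sequences with $\xi(x_n)\to0$, and near irregular boundary points $\xi$ need not become small. For example, take $L=\Delta$ and $D=\R^d\setminus\{0\}$ with $d\ge2$. Brownian motion never hits the origin, so $\xi\equiv1$ and the boundary condition imposes nothing there. Then $u\equiv1$ solves $\Delta u+u-u^2=0$ in $D$, and $\{u>\delta\}$ accumulates at $0\in\partial D$. So $(u-\delta)^+\zeta$ need not have compact support in $D$, and as written your proof only covers domains all of whose boundary points are regular (e.g.\ locally Lipschitz ones). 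A capacity argument (irregular points are polar, by Kellogg's theorem) might repair this, but you have not supplied it.

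The paper's Step~1 fits your scheme directly. With $g=f/s$ and $R_{\max}>2R$, let $u_n$ solve $(L+g(x,u))u_n=0$ in the smooth set $D_n\cap B_{R_{\max}}$, with $u_n=0$ on $\partial D_n$ and $u_n=u$ on $\partial B_{R_{\max}}$.
\begin{itemize}
\item Since $g(x,u)\le\partial_sf(x,0)-\varepsilon u^\ell$ and $0<u_n\le u$, you get $\L u_n=(\partial_sf(x,0)-g(x,u))u_n\ge\varepsilon u_n^{1+\ell}$.
\item Now $u_n\zeta^2$ is a genuine $H^1_0(D_n\cap B_{R_{\max}})$ test function, and $Q(u_n\zeta)\ge\lambda(-\L;D_n)\|u_n\zeta\|_2^2\ge0$. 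So your inequality holds for $u_n$.
\item Lemma~\ref{lem:convergence un to u} gives $u_n\uparrow u$; there, irregular points are handled by the Berestycki--Nirenberg--Varadhan maximum principle for barrier boundary conditions. Monotone convergence then transfers the inequality to $u$.
\end{itemize}
With this substitution the rest of your argument goes through unchanged.

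A minor point on sufficiency: you cannot always take a ball. Let $L=\Delta$ in a planar strip of width $w$, with $\partial_sf(\cdot,0)\equiv c_0\in(\pi^2/w^2,\,4j_{0,1}^2/w^2)$, where $j_{0,1}\approx2.405$ is the first zero of $J_0$. Then $\lambda(-\L)=\pi^2/w^2-c_0<0$. But every disc $B\Subset D$ has radius below $w/2$, hence $\lambda_B>4j_{0,1}^2/w^2-c_0>0$. Use instead a smooth bounded subdomain $D_n$ from an exhaustion, as in Proposition~\ref{prop:maximal solution}~$(ii)$.
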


\begin{rmk}
In this and all of our results, $D\subseteq \Rm^d$ may be replaced with $\lvert D\cap B(0,R)\rvert=O(R^d)$, with the same constraints on $d$ and no changes to the proofs.
\end{rmk}

To examine the question of uniqueness, we will need to strengthen assumption~\eqref{eq:existence condition}
to the following:
\Fi{concav1}
\left\{\begin{array}{l} \text{there exists a constant
$\sigma>0$ such that}\\[5pt]
\displaystyle  f(x,s_2)/s_2-f(x,s_1)/s_1\leq -\sigma (s_2-s_1)^{\ell}\\[5pt]
\text{for all } x\in D, \ \ 0<s_1<s_2.
\end{array}\right.
\Ff


\commentout{
To examine the question of uniqueness, we will need the following condition on the function $s\mapsto f(x,s)$,
\Fi{concav1}\left\{\begin{array}{l} \text{there exists a positive constant
$\sigma>0$ such that}\\
f(x,s_2)\leq f(x,s_1)+\partial_s f(x,s_1)(s_2-s_1)-\sigma (s_2-s_1)^{1+\ell}\\
\text{for all} \quad 0<s_1<s_2\leq S\quad\text{and}\quad x\in D.
\end{array}\right.
\Ff
\begin{rmk}
The condition $f(x,s_2)\leq f(x,s_1)+\partial_s f(x,s_1)(s_2-s_1)-\sigma (s_2-s_1)^{1+\ell}$ for $0<s_1<s_2\leq S$ and $x\in  D$ cannot be satisfied for any $\ell \in (0,1)$, hence we stipulate $\ell\geq 1$.
\end{rmk}
\begin{rmk}
A sufficient condition for \eqref{concav1} is that $s\mapsto f(x,s)$ is $C^2([0,S])$ for $x\in  D$, and that there exists a positive constant $\sigma>0$ such that $\partial_{ss}f(x,s)\leq -\sigma s^{\ell-1}$ for all $0\leq s\leq S$ and $x\in  D$. This can be checked from Taylor's theorem with the integral form of the remainder.
\end{rmk}}
\begin{theo}\label{theo:uniqueness}
Suppose that Assumption~\ref{assum:standing} holds.
Suppose further that $f$ satisfies \eqref{concav1} for
some $\ell>0$ and that the space dimension satisfies $d\leq 2+\frac{4}{\ell}$.
Then \eqref{s-a} admits at most one positive, bounded solution.
\end{theo}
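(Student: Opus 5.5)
We compare an arbitrary positive bounded solution $u$ with the maximal solution $\bar u$ of Proposition~\ref{prop:maximal solution}; it suffices to show $u\equiv\bar u$. We have $0<u\le\bar u\le S$. Subtracting $u$ times the equation for $\bar u$ from $\bar u$ times the equation for $u$ gives
\[
\partial_i\big(a_{ij}(u\partial_j\bar u-\bar u\partial_j u)\big)=u\bar u\Big(\frac{f(x,u)}{u}-\frac{f(x,\bar u)}{\bar u}\Big)\ge \sigma u\bar u(\bar u-u)^{\ell}
\]
by \eqref{concav1}. The plan is to test this identity against a cutoff adapted to $w:=\bar u-u\ge0$, obtaining an integral inequality that forces $w\equiv0$ when $d\le 2+4/\ell$. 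Dividing the identity by $u\bar u$ is awkward near $\partial D$, so I would instead work with $w$ directly: $w$ satisfies $Lw+c(x)w=0$ with $c=(f(x,\bar u)-f(x,u))/w$, and since $f(\bar u)/\bar u\le f(u)/u$ one has $Lw+\frac{f(x,u)}{u}w\le -\sigma \bar u w^{\ell}\cdot w$. I expect the cleaner route is to use $u$ as a positive supersolution-type weight: writing $w=u\psi$, one gets (Picone/ground-state transform) $\partial_i(u^2a_{ij}\partial_j\psi)\ge \sigma u\bar u\,w^\ell\psi\ge\sigma u^2\bar u\, w^{\ell}\psi$, i.e.\ $\psi\ge0$ is a subsolution of a weighted divergence-form equation with an absorbing power-type right-hand side.

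The key step is a Keller--Osserman / Liouville-type integral estimate. Multiply by $\psi\,\eta_R^{m}$ with $\eta_R$ a cutoff equal to $1$ on $B_R$, supported in $B_{2R}$, $|\nabla\eta_R|\le C/R$; the test function vanishes near $\partial D$ in the barrier sense because $u,\bar u$ do, and $\psi=\bar u/u-1$ is controlled in the BNV framework (here I would use the barrier $\xi$ of Section~\ref{section:Barrier} and approximate $D$ by smooth subdomains, which is where the lack of boundary regularity enters; alternatively test directly with $w$ rather than $\psi$ to avoid dividing by $u$). Actually the cleanest is to multiply the first displayed identity by $\eta_R^m$ only: integrating by parts gives
\[
\sigma\int u\bar u\,w^{\ell}\eta_R^m\le \frac{Cm}{R}\int_{B_{2R}\setminus B_R}|u\nabla \bar u-\bar u\nabla u|\,\eta_R^{m-1},
\]
with the boundary term on $\partial D$ having the right sign (since $u\partial_\nu\bar u-\bar u\partial_\nu u$ integrates favourably when both vanish; to be justified by approximation). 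Then $u\nabla\bar u-\bar u\nabla u=u\nabla w-w\nabla u$, and bounded solutions have bounded gradients (interior elliptic estimates, up to the boundary in the relaxed sense), so the right side is bounded by $\frac{C}{R}\int w\,\eta^{m-1}$-type quantities after a second integration by parts (a Caccioppoli estimate for $w$ to convert $|\nabla w|$ into $w$). Hölder's inequality with exponents $(\ell+1)$ and its conjugate, absorbing the weight $u\bar u$, gives $I(R):=\int u\bar u w^{\ell+1}\eta_R^m\lesssim R^{-2}\,I(2R)^{1/(\ell+1)}\,|B_{2R}|^{\ell/(\ell+1)}$, i.e.\ roughly $I(R)\lesssim R^{\,d-2(\ell+1)/\ell}$ through iteration. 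When $d<2+2/\ell\cdot\!$ (with the correct exponent bookkeeping yielding the threshold $2+4/\ell$ because the weight $u\bar u$ contributes a second factor, i.e.\ we apply Hölder twice: once on the flux term and once on the Caccioppoli term) this tends to $0$ and $w\equiv0$; in the critical case $d=2+4/\ell$ the bound shows $I$ is finite, whence the annular integrals tend to zero and a refined version of the same inequality gives $I=0$.

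The main obstacle is getting the exponents to produce exactly $2+4/\ell$ rather than a worse threshold: the weight $u\bar u$ may decay (solutions need not be bounded below), so the Hölder step must be arranged so that only $w\le\bar u$ and $u\le \bar u$ are used, never a lower bound. I would try first the test function $w^{\,\gamma}\eta_R^m/\bar u^{\beta}$-free version above, optimizing the auxiliary powers, and handle the borderline case by the standard ``finite integral implies vanishing annular contribution'' argument.
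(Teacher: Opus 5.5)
There is a genuine gap, and it sits exactly where the dimension threshold has to come from. The route you finally settle on does not close: you integrate the flux identity against $\eta_R^m$ alone, then control $|u\nabla\bar u-\bar u\nabla u|$ by gradient bounds and a Caccioppoli estimate for $w$. To see why, note that $w$ satisfies $(L+c)w=\bar u\,\big(f(x,u)/u-f(x,\bar u)/\bar u\big)\ge\sigma\bar u\,w^{\ell}\ge 0$ with $c=f(x,u)/u$. So $w$ is a subsolution, and your stated inequality for $Lw+\frac{f(x,u)}{u}w$ has the wrong sign. The coefficient $c$ is bounded but of unit size, so a Caccioppoli estimate leaves the term $\int c\,w^2\eta_R^2$, which carries no factor $R^{-2}$. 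Interior estimates only give $|\nabla w|\lesssim w$ at unit scale, never $w/R$, and they give nothing up to an irregular $\partial D$. Moreover, your left-hand side $\sigma\int u\bar u\,w^{\ell}\eta_R^m$ is weighted by the smaller solution $u$, which has no lower bound relative to $w$, so it cannot absorb quantities expressed through $w$ alone. Consistently with this, your own iteration yields only $d<2+2/\ell$. The upgrade to $2+4/\ell$ (``apply H\"older twice'') is asserted, not derived.

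The missing idea is the one you mention and then abandon. Test the ground-state form $\partial_i(u^2a_{ij}\partial_j\psi)=u\bar u\,\big(f(x,u)/u-f(x,\bar u)/\bar u\big)$ with $\eta_R^2\psi$ rather than with $\eta_R^m$. Because the weight solves the linearised equation, the zero-order term cancels exactly and one gets
\[
\int\eta_R^2u^2|\nabla\psi|_A^2+\sigma\int\eta_R^2\,\bar u\,w^{1+\ell}\le 2\int\eta_R\,u\,w\,|\nabla\eta_R|_A\,|\nabla\psi|_A ,
\]
where $\bar u\,w^{1+\ell}\ge w^{2+\ell}$ uses only $w\le\bar u$. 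Apply Cauchy--Schwarz on the annulus $B_{2R}\setminus B_R$ and H\"older in the form $\int_{B_{2R}}w^2\le\big(\int_{B_{2R}}w^{2+\ell}\big)^{2/(2+\ell)}|B_{2R}|^{\ell/(2+\ell)}$. This gives an inequality of the form \eqref{HR3}, whose coefficient $R^{-1+d\ell/(2(2+\ell))}$ stays bounded precisely when $d\le 2+4/\ell$. Together with the doubling Lemma~\ref{lem:technical lemma g}, this bounds the energies uniformly. In the borderline case $d=2+4/\ell$ one concludes by making the annular energy small (Step~6 of the proof of Theorem~\ref{thm:Liouv6}); that is the correct version of your ``finite integral'' remark once the right inequality is available.

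This is essentially what the paper does, organised as a reduction. By Proposition~\ref{pro:nonex-uniqueness}, $w=\bar u-u$ solves $Lw+h(x,w)=0$ with $h(x,s)\le\frac{f(x,u)}{u}s-\sigma s^{1+\ell}$. Testing with $u$ shows $\lambda(-(L+f(x,u)/u))\ge0$, which contradicts Theorem~\ref{thm:Liouv6}. In that proof the weight is not $u$ but the Dirichlet principal eigenfunction $\varphi_n$ on a smooth exhaustion $D_n$, and $w$ is replaced by linear approximants $u_n\uparrow w$ on $D_n\cap B_{R_{\max}}$. By Hopf's lemma, $u_n/\varphi_n$ is continuous up to $\partial D_n$, which is what justifies the integration by parts without any regularity of $\partial D$. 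Your quotient $\psi=w/u$, by contrast, may be unbounded near an irregular boundary.
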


\begin{rmk}
Some comments on the assumptions~\eqref{eq:existence condition},  \eqref{concav1} are in order.
\begin{enumerate}
\item
The condition \eqref{concav1} cannot be satisfied for any $\ell\in(0,1)$, hence we stipulate~$\ell\geq1$. Notice that the larger $\ell$ is, the weaker the condition.
\item
Letting $s_1\searrow0$ in \eqref{concav1}, under the Assumption \ref{assum:standing}, 
one recovers~\eqref{eq:existence condition}.
\item
If $f$ satisfies the assumption \eqref{0} and moreover $s\mapsto f(x,s)$ is of class $C^2$ with
\[\sup_{\Omega\times[0,+\infty)}  \partial_{ss}f<0,\]
then condition \eqref{concav1} holds  with $\ell=1$ (and hence~\eqref{eq:existence condition} too).
This is seen by computing, for  $s>0$,
 \[\partial_s\Big( \frac{f(x,s)}{s}\Big)
=\frac{\partial_s f(x,s)s-f(x,s)}{s^2}=\frac1{s^2}
\int_0^s \partial_{ss}f(x,\tau)\tau d\tau\leq\frac12 \sup\partial_{ss}f.\]
Thus \eqref{concav1} is a weak concavity condition
Under the uniform negativity of $\partial_{ss}f$, the dimensional threshold in 
Theorems \ref{theo:nonexistence at criticality} and \ref{theo:uniqueness} becomes $d\leq6$.
\end{enumerate}
\end{rmk}

Both Theorems \ref{theo:nonexistence at criticality} and \ref{theo:uniqueness} are sharp, as demonstrated by the following.

\begin{theo}\label{thm:c-e}
Suppose that the space dimension satisfies $d> 2+\frac{4}{\ell}$ for some $\ell>0$. Then there exists a positive,
bounded and radially symmetric $c\in C^{\infty}(\Rm^d)$ 
such that $\lambda(-\mathcal{L})=\lambda(-(L+c))=0$, but the equation
\begin{equation}\label{eq:counterexample intro}
\Delta u+c(x)u-u^{1+\ell}=0, \quad x\in\Rm^d,
\end{equation}
has infinitely many positive, bounded and radially symmetric solutions.
\end{theo}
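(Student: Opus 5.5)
We will construct the counterexample from an explicit power-law singular-type profile, regularised near the origin. The exponent $\alpha = 2/\ell$ is natural because $\Delta r^{-\alpha} = \alpha(\alpha+2-d) r^{-\alpha-2}$, and $(r^{-\alpha})^{1+\ell} = r^{-\alpha-2}$. So $w = A r^{-\alpha}$ satisfies $\Delta w - w^{1+\ell} = 0$ precisely when $A^{\ell} = \alpha(d-2-\alpha)$. This requires $d > 2+\alpha = 2 + 2/\ell$. The stronger threshold $d>2+4/\ell$ should enter through the potential term, as follows.

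\textbf{Step 1 (choice of $c$ and the eigenvalue).} Take $c(x)$ radial, smooth and positive, with $c(r) \sim \kappa r^{-2}$ at infinity. Choose $\kappa$ as large as possible while keeping $\lambda(-(\Delta+c)) = 0$, that is, $\kappa = (d-2)^2/4$, the Hardy constant. Near the origin we smooth $c$ and tune the profile (for instance through a one-parameter family and continuity) so that there is a positive radial generalised eigenfunction $\phi$ with $\Delta\phi + c\phi = 0$. Such a $\phi$ behaves like $r^{-(d-2)/2}\log r$, or like $r^{-(d-2)/2}$, at infinity. This gives $\lambda \geq 0$. For $\lambda \leq 0$ we use Hardy-type test functions: any $\lambda>0$ would force $-\Delta - c \geq \lambda$ outside a ball, and that contradicts the sharpness of Hardy's constant via truncated $r^{-(d-2)/2}$ test functions.

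\textbf{Step 2 (solutions).} We look for solutions $u \approx A r^{-\alpha}$ at infinity with $\alpha = 2/\ell$. Plugging in gives $A^{\ell} = \alpha(d-2-\alpha) + \kappa$, so the nonlinearity is compatible. Next we linearise around this tail. Setting $u = A r^{-\alpha}(1+v)$, the linearised operator has indicial roots solving
\[
\gamma^2 - (d-2-2\alpha)\gamma - \ell A^{\ell} + \ldots = 0 .
\]
The condition $d - 2 > 2\alpha = 4/\ell$ should make $r^{-\alpha}$ decay faster than the critical Hardy rate $r^{-(d-2)/2}$. This guarantees that $\phi \gg u$ at infinity is possible, so a small multiple of $\phi$ can be added without destroying boundedness. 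Concretely, we would solve the radial ODE
\[
u'' + \tfrac{d-1}{r}u' + c u - u^{1+\ell} = 0, \qquad u(0) = \beta,\; u'(0) = 0,
\]
by shooting in $\beta$. We then show that a whole interval of $\beta$ gives positive global solutions that are bounded and decay. Distinct $\beta$ give distinct solutions, hence infinitely many.

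\textbf{Step 3 (global positivity and boundedness for the shooting).} We would use sub- and supersolutions. The function $\epsilon\phi$ is, for small $\epsilon$, close to a solution since $u^{1+\ell}$ is negligible where $\phi$ is small. Then $\bar u = \min(S, M\phi)$-type supersolutions, together with a subsolution $\max(\epsilon\phi - \text{correction}, 0)$, trap the solutions and yield a continuum indexed by the behaviour at infinity. Equivalently, for each small $\epsilon$ we find a solution asymptotic to $\epsilon\phi$ in a suitable sense, via a fixed point in weighted spaces. The key integrability condition for the perturbation to be controlled is
\[
\int^{\infty} \phi^{\ell} \, r \, dr < \infty, \qquad \text{i.e.}\quad \ell\,\frac{d-2}{2} > 2 ,
\]
which is exactly $d > 2 + 4/\ell$. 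Under it, a Green's-function contraction around $\epsilon\phi$ gives distinct solutions $u_\epsilon = \epsilon\phi + o(\phi)$, which are positive, bounded and radial.

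The main obstacle is Step 1 combined with Step 3: arranging $c$ so that $\lambda = 0$ exactly while having a positive bounded $\phi$ with the precise decay $r^{-(d-2)/2}$, without the logarithm, or at least one mild enough that the integrability above still holds. We would first try the explicit choice $\phi = (1+r^2)^{-(d-2)/4}$ and define $c := -\Delta\phi/\phi$. This $c$ is smooth, radial, positive and $\sim ((d-2)^2/4)\, r^{-2}$ at infinity, and $\lambda = 0$ then follows from $\phi$ being a positive solution together with the Hardy argument above.
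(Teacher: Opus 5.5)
Your Step~1 is sound. With $\phi=(1+r^2)^{-(d-2)/4}$ one gets $c=-\Delta\phi/\phi=\frac{d-2}{2}\,\frac{d+\frac{d-2}{2}r^2}{(1+r^2)^2}$. This is smooth, positive, bounded, radial and equal to $H_dr^{-2}+O(r^{-4})$. Moreover $\lambda(-(\Delta+c))=0$: the positive solution $\phi$ gives $\lambda\geq0$, and $c\geq0$ gives $\lambda\leq\lambda(-\Delta;B_R^c)=0$.

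The gap is in Steps~2--3. For this $c$ there is \emph{no} positive bounded solution with $u=\epsilon\phi+o(\phi)$, nor any positive bounded solution lying below $M\phi$. To see this, take a positive bounded radial solution and put $w=u/\phi$. Since $\Delta\phi+c\phi=0$, one has $\nabla\cdot(\phi^2\nabla w)=\phi\,u^{1+\ell}$, that is
\[
\bigl(r^{d-1}\phi^2w'\bigr)'=r^{d-1}\phi\,u^{1+\ell}>0,\qquad \bigl(r^{d-1}\phi^2w'\bigr)(0)=0.
\]
Hence $r^{d-1}\phi^2w'\geq\eta>0$ on $[1,\infty)$. Since $r^{d-1}\phi^2\sim r$, this gives $w\geq w(1)+\eta'\log r\to\infty$. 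So $u/\phi\to\infty$ for every positive bounded radial solution. The reason is that $-(\Delta+c)$ is critical ($\int^\infty dr/(r^{d-1}\phi^2)=\infty$); this is the same Liouville mechanism as in the proof of Theorem~\ref{thm:Liouv6}.

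This breaks your construction in three ways.
\begin{itemize}
\item The subsolution $\max(\epsilon\phi-\text{correction},0)$ of Step~3 cannot exist, since it would trap a radial solution below $M\phi$.
\item The Green's-function contraction around $\epsilon\phi$ only produces solutions on exterior regions. These cannot be continued to bounded solutions on $\mathbb{R}^d$, because they have $u/\phi\to\epsilon$.
\item The same identity shows that those exterior solutions lie \emph{above} $\epsilon\phi$, so your correction has the wrong sign.
\end{itemize}
The heuristic of Step~2 is also reversed. The inequality $2/\ell<(d-2)/2$ means $r^{-2/\ell}$ decays \emph{slower} than $r^{-(d-2)/2}$, so $u\gg\phi$, not $\phi\gg u$. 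Also, the tail constant is $A^\ell=\kappa-\alpha(d-2-\alpha)$, not $\kappa+\alpha(d-2-\alpha)$.

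The missing idea is that the family must be built around the slower-decaying solution of the linearised problem at infinity, not around a ground state of minimal growth. The paper does this by staying strictly below the Hardy constant. It takes $V\sim c|x|^{-2}$ with $H_d-(\sqrt{H_d}-2/\ell)^2<c<H_d$ and $V\leq H_d|x|^{-2}$. Hardy's inequality then gives $\lambda\geq0$, and $V\geq0$ near infinity gives $\lambda\leq0$. In this range $g(a)=a^2+(d-2)a+c$ has distinct roots $a_*<a^*$. The solutions $u_\kappa\sim\kappa r^{a^*}$ are obtained between the sub- and supersolutions $\kappa r^{a^*}\mp Cr^{a}$ with $a\in(a_*,a^*)$, where $g(a)<0$ supplies the strict sign. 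The condition $d>2+4/\ell$ is exactly what allows $a^*<-2/\ell$, which makes $u^{1+\ell}$ lower order.

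Your critical potential can be rescued in the same spirit. With $m=(d-2)/2$ one has $(\Delta+H_dr^{-2})\bigl(r^{-m}f(\log r)\bigr)=r^{-m-2}f''(\log r)$. So $r^{-m}\bigl(\kappa\log r\mp C(\log r)^\beta\bigr)$ with $0<\beta<1$ are sub- and supersolutions at infinity, using $m\ell>2$ and $c-H_dr^{-2}=O(r^{-4})$. This gives solutions $u_\kappa\sim\kappa r^{-m}\log r$. Ironically, the logarithm you tried to avoid is precisely the behaviour the solutions must have.
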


The function $c$ is explicit and it is a Hardy-type potential. 
A more detailed statement is given by Theorem \ref{thm:Hardy_spercritical} below. 
We also show there that multiplicity of solutions can occur even in the case
$\lambda(-\mathcal{L})<0$.

We will extend these counterexamples in Section \ref{section:homogeneous counterexample} by constructing a locally (but not globally) analytic domain on which we have non-uniqueness of positive steady states. We will do this by the probabilistic method: we construct a random domain on which this occurs with positive probability.
\begin{theo}\label{theo:existence nonuniq d geq 7}
For $d\geq 7$ there exists a locally analytic domain $D\subseteq\Rm^d$ on which~\eqref{s-a} has multiple positive, bounded solutions for the reaction $f(x,s)=s-s^2$.
\end{theo}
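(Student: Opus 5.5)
Since the reaction $f(x,s)=s-s^2$ is homogeneous, the potential $c$ of Theorem~\ref{thm:c-e} must be produced by the geometry of $D$. The plan is to build $D$ as $\Rm^d$ with many small obstacles removed, so that the Dirichlet condition on the holes acts as an effective Hardy-type absorption. For $\ell=1$ we have $d\ge 7>6=2+4/\ell$, which is exactly the regime of Theorem~\ref{thm:c-e}. That theorem gives a radial $c$ with $\lambda(-(\Delta+c))=0$ and infinitely many bounded positive radial solutions of $\Delta u+cu-u^2=0$. Because $1-c(x)\to 1$ as $|x|\to\infty$ while $c$ itself decays like $|x|^{-2}$, the homogenised domain should produce the effective equation $\Delta u+u-\mu(x)u-u^2=0$, where the capacity density $\mu(x)$ of the obstacles near $x$ equals $1-c(x)$ up to a small error.

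\textbf{Step 1: random obstacle domain.} Take a Poisson cloud of centres with intensity $\rho(x)$, and around each centre remove a ball (or a smooth analytic bump) of radius $r(x)$. The radii and intensities are chosen at local mesoscale $\epsilon(x)\to 0$ so that $\rho(x)\,\mathrm{cap}(B_{r(x)})\approx 1-c(x)$. This is the classical ``strange term from nowhere'' scaling of Cioranescu--Murat (or Papanicolaou--Varadhan). Since the holes are finitely many in every compact set and are unions of balls, $D$ is locally analytic. To make $D$ a connected domain, I would first discard overlapping holes, which by the Borel--Cantelli lemma happens only finitely often with positive probability. I would also take the holes to be bounded by analytic surfaces, slightly smoothed where needed.

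\textbf{Step 2: sub- and supersolutions.} Rather than proving full homogenisation, I would transfer the two distinct radial solutions $u_1<u_2$ of Theorem~\ref{thm:c-e}; I assume they can be chosen with decay rates $|x|^{-2}$ that differ at infinity, via the more detailed Theorem~\ref{thm:Hardy_spercritical}. On $D$, I would build a supersolution $\bar u=u_1(1+\delta)\,w$ and subsolutions $\underline u_i$ close to $u_i$. Here $w$ is a corrector solving $\Delta w=\mu w$-type cell problems near the holes, with $w=0$ on the holes and $w\approx1$ at distance, as in the capacity construction. With positive probability, the errors in the corrector equations are controlled by a summable sequence over dyadic annuli, using concentration for Poisson sums of capacities. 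I then plan to show that the maximal solution of Proposition~\ref{prop:maximal solution} on $D$ is at least $(1-\delta)u_2\,w$ at infinity. I would also produce a second solution lying below $(1+\delta)u_1 w$, by monotone iteration from a subsolution of the same order between these barriers. Since $u_1/u_2$ does not tend to $1$ at infinity, these two solutions differ.

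\textbf{Main obstacle.} The delicate step is the quantitative corrector estimate at large $|x|$, uniformly over infinitely many dyadic annuli. The effective potential $1-c$ has to be reproduced with relative error $o(|x|^{-2})$, because the criticality and the separation of $u_1,u_2$ live at the Hardy scale $|x|^{-2}$. I would first try choosing $\epsilon(x)=|x|^{-\kappa}$ with large $\kappa$, so that fluctuations per annulus are of order $|x|^{-2-\eta}$, and then use a Borel--Cantelli argument to get the bound on all but finitely many annuli with positive probability. The finitely many bad annuli can then be absorbed by the strict ordering of the barriers, after slightly perturbing $\delta$.
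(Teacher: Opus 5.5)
\textbf{There is a genuine gap: Step~2 carries the entire content of the theorem, and as sketched it does not go through.}

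\emph{Sign of the corrector.} The holes must act as absorption, so you need $w>0$ in $D$, $w=0$ on the holes and $\Delta w\approx-\mu w$, for instance the principal eigenfunction of a perforated cell, whose eigenvalue is the capacity density. With $\Delta w=\mu w$ the product would see a source rather than absorption.

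\emph{The product ansatz fails pointwise.} Even with the right sign, $\bar u=(1+\delta)u_1w$ is not a supersolution. Write $\Delta(u_1w)=w\Delta u_1+2\nabla u_1\cdot\nabla w+u_1\Delta w$. At the boundary of every hole $w=0$, while $\nabla w\neq0$ points into $D$. So on the side of each hole facing the origin, the residual of $\bar u$ tends to $2(1+\delta)\nabla u_1\cdot\nabla w>0$. Every slack term you have (the $\delta$-terms, an eigenvalue gap) carries a factor $w$ and vanishes there. Your subsolutions fail in the same way on the far side of each hole. Repairing this needs a corrector adapted to the drift $2\nabla\log u_1$, i.e.\ a genuine two-scale construction, with the effective potential controlled to accuracy $O(|x|^{-2-\eta})$. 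You do not provide this.

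\emph{Poisson fluctuations.} For a Poisson cloud, the number of holes per cell is Poisson with mean of order one. So the local capacity density fluctuates by order one at the cell scale throughout every annulus. Concentration of capacity sums over dyadic annuli controls only annular averages, whereas a barrier must satisfy its differential inequality at every point of $D$. What you would actually need is quantitative stochastic homogenisation of a perforated domain with pointwise error bounds. Randomness buys you nothing on this route; a deterministic array of holes with slowly varying spacing would remove the concentration step.

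\emph{The precision target is self-imposed.} The accuracy $O(|x|^{-2-\eta})$ is forced only by your choice to transfer $u_\kappa$, whose barriers have slack of lower order. It suffices that the effective potential lie between $c_1|x|^{-2}$ and $c_2|x|^{-2}$ with $2d-8<c_1<c_2<H_d$ (cf.\ Theorem~\ref{theo:bound on solns l=1}). Then $\eta|x|^{-2}-|x|^{b}$ bounds the maximal solution from below. Also $|x|^{a}$, with $a<-2$ and $a^2+(d-2)a+c_2<0$, bounds the minimal solution from above. Both barriers have slack of the same order as their leading term.

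\textbf{How the paper avoids this.} The paper uses no homogenisation at all. It tunes the intensity of the Poisson balls, via the Kesten--Spitzer--Whitman asymptotics of the Wiener sausage, so that a path meets them at rate $1-V$. Here $V$ is piecewise constant, between $\min(c_1|x|^{-2},1)$ and $\min(c_2|x|^{-2},1)$. It then couples, lifetime by lifetime, a free binary BBM killed on $\partial D$ with a BBM killed at rate $1-V$ in $\Rm^d$. Choosing the radii small annulus by annulus, and using that independent Brownian paths do not meet in $d\ge4$, the coupling fails with probability at most $6\epsilon$. The window $2d-8<c_1<c_2<H_d$ gives $\Pm_x(\glSurv\setminus\locSurv)>0$ for the killed process, so the same holds in the annealed sense on the random domain. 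Kolmogorov's zero-one law upgrades this to almost every domain. The correspondence of Theorem~\ref{theo:PDE survival connection} turns it into non-uniqueness or existence at criticality. Finally, with positive probability no balls overlap and no balls lie in a large ball, which gives analyticity and $\lambda(-(\Delta+1);D)<0$, hence two solutions.

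Your route, if completed (ideally aiming at the coarse window above), would give a more explicit, possibly deterministic, domain together with the decay of both solutions. As written, however, its central estimate is missing.
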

This provides a counterexample to a conjecture of Berestycki and Graham for $d\geq 7$ \cite{Berestycki2025,Berestycki2026}.

\begin{example}
Consider the case of classical KPP nonlinearities,
\[
\left\{\begin{array}{ll}
\partial_i(a_{ij}(x)\partial_j u)+c(x)u-u^2=0, & x\in D,\\
u(x)=0, & x\in\partial D.
\end{array}\right.
\]
Then we have the existence of a unique positive, bounded solution when $\lambda(-(\Delta+c))<0$, as well
as non-existence when $\lambda(-(\Delta+c))\geq0$, for $d\leq 6$, but counterexamples to both of these for $d\geq 7$.



\end{example}
\begin{example}
Consider the case of Allen-Cahn nonlinearities,
\[
\left\{\begin{array}{ll}
\partial_i(a_{ij}(x)\partial_j u)+c(x)u-u^3=0, & x\in D,\\
u(x)=0, & x\in\partial D.
\end{array}\right.
\]
Then we have uniqueness of positive, bounded solutions, and non-existence when $\lambda(-(\Delta+c))=0$, for $d\leq 4$, but counterexamples to both of these for $d\geq 5$.
\end{example}

\begin{example}\label{ex:dim3}
Consider in dimension $d=3$ the equation
\[
\left\{\begin{array}{ll}
\partial_i(a_{ij}(x)\partial_j u)+c(x)u-u^{1+\ell}=0, & x\in D,\\
u(x)=0, & x\in\partial D.
\end{array}\right.
\]
Then we have uniqueness of solutions, and non-existence when $\lambda(-(\Delta+c))=0$, for $\ell\leq 4$, but counterexamples to both of these for $\ell>4$.
\end{example}

Example \ref{ex:dim3} 
demonstrates that if $f$ is only assumed to be strictly convex, even uniformly in $x\in  D$, 
i.e.~$\sup_{x\in  D}\partial_{ss}f(x,s)<0$ for all $s>0$,
then one can have non-uniqueness and existence at criticality even when $d=3$. 

Conversely, in dimension $d\leq2$, the concavity-type assumption on $f$ can be further relaxed, and reduced to
the standard strong-KPP condition. This is essentially due to Nordmann \cite{Nord}: we obtain what follows by combining his critical maximum principle with some simple arguments in \ref{sec:uniq from non-exis}. More precisely, one needs
%
\begin{equation}\label{eq:mediumKPP f}
(0,+\infty)\ni s\mapsto f(x,s)/s \quad \text{is non-increasing for} \; x\in  D.
\end{equation}
along with the strong-KPP condition
\begin{equation}\label{eq:strongKPP f x}
(0,+\infty)\ni s\mapsto f(x,s)/s \quad \text{is strictly decreasing},
\end{equation}
for $x$ on a set of positive measure.
We will also need the following technical assumption.
\begin{assum}\label{assum: extra assumption for Nordmann}
The coefficient $a^{ij}$ is $C^{1,\alpha}_{\loc}(D)$, the boundary $\partial D$ is $C^{2,\alpha}_{\loc}$, and the reaction $f$ is $C^{0,\alpha}_{\loc}(D\times \Rm_{\geq 0})$, for some H{\"o}lder exponent $\alpha\in (0,1)$.
\end{assum}
The above differ from being locally H{\"o}lder $C^k$ in that the H{\"o}lder exponent $\alpha$ is uniform, even while the H{\"o}lder constant depends on the neighbourhood.
\begin{rmk} \commentout{\nota{\BS I would rather remove $f\in C^1(D)$ from \ref{assum: extra assumption for Nordmann}
and explain this matter after Theorem {theo:existence and uniqueness two dimensions.
We should also say that this is Nordmann's result}.}
}In light of the assumption $f\in C^1(D)$, the above assumption on the reaction term is unnecessary, but we keep it so that Remark \ref{rmk:generalising f Holder} remains valid.
\end{rmk}

\begin{theo}\label{theo:existence and uniqueness two dimensions}
Suppose that Assumptions~\ref{assum:standing} and \ref{assum: extra assumption for Nordmann} hold, 
and that 
$d\leq 2$. Suppose further that $f$ satisfies \eqref{eq:mediumKPP f} and also the
strong-KPP condition~\eqref{eq:strongKPP f x} for all $x$ in some set $E\subseteq  D$ with positive measure.
Then \eqref{s-a} admits a positive, bounded solution if and only if $\lambda(-\mathcal{L})<0$, and this 
is unique when it exists.
\end{theo}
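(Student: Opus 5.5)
We prove the three assertions in turn: existence when $\lambda(-\L)<0$, non-existence when $\lambda(-\L)\geq0$, and uniqueness. Throughout, the paper's phrase that this is ``essentially due to Nordmann'' tells us which tool carries the weight. The key ingredient is Nordmann's critical maximum principle for $d\leq 2$. Under Assumption~\ref{assum: extra assumption for Nordmann}, it states: if $\lambda(-\L)\geq0$ and $\phi\geq0$ is bounded with $-\L\phi\geq0$ in $D$, then $\L\phi=0$. In that case $\phi$ is a multiple of the generalised principal eigenfunction, since in dimension $\le2$ a bounded positive supersolution at criticality must be the (unique) ground state.

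\textbf{Existence.} Suppose $\lambda(-\L)<0$. By the characterisation of $\lambda$ as a limit of Dirichlet eigenvalues on an exhausting sequence of bounded smooth domains, there is a ball $B\Subset D$ with $\lambda(-\L;B)<0$. Let $\phi_B$ be its principal eigenfunction, extended by zero. For $\eta$ small, $\eta\phi_B$ is a generalised subsolution, because $\partial_sf$ is continuous at $0$. Meanwhile $S$ is a supersolution by \eqref{S}. The maximal solution of Proposition~\ref{prop:maximal solution} therefore lies above $\eta\phi_B$, so it is positive.

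\textbf{Non-existence.} Suppose $\lambda(-\L)\geq0$ and $u$ is a positive bounded solution. By \eqref{eq:mediumKPP f}, $f(x,u)\leq\partial_sf(x,0)u$, so $-\L u\geq0$. Nordmann's principle then gives $\L u=0$, that is, $f(x,u)=\partial_sf(x,0)u$ almost everywhere. On $E$, strict monotonicity of $f(x,s)/s$ forces $f(x,u)/u<\partial_sf(x,0)$ wherever $u>0$, which is a contradiction.

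\textbf{Uniqueness.} Let $u_1\leq u_2$, with $u_2$ the maximal solution; it suffices to compare any solution with the maximal one. Set $c_i=f(x,u_i)/u_i$, so that $Lu_i+c_iu_i=0$ and $c_1\geq c_2$ by \eqref{eq:mediumKPP f}. Then $u_2>0$ solves $(L+c_2)u_2=0$, hence $\lambda(-(L+c_2))\geq0$. Moreover $-(L+c_2)u_1=(c_1-c_2)u_1\geq0$, and $u_1$ is bounded and positive. Applying Nordmann's principle to the operator $L+c_2$ gives $(c_1-c_2)u_1\equiv0$. On $E$, if $u_1<u_2$ this is strict, so $u_1=u_2$ on $E$.

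The remaining step is to conclude from this that $u_1=u_2$ everywhere. Nordmann's principle also says that $u_1$ is proportional to the ground state $u_2$, so $u_1=\kappa u_2$ with $\kappa\in(0,1]$. If $\kappa<1$, then $u_1<u_2$ everywhere, contradicting the strict inequality on $E$. Hence $u_1=u_2$.

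\textbf{Main obstacle.} The hard step is applying Nordmann's result to $L+c_2$. This potential is only locally Hölder, and the maximum principle must be used in exactly the form above (a supersolution is a multiple of the ground state) together with the boundary regularity. This is why Assumption~\ref{assum: extra assumption for Nordmann} is imposed, and we need to check that $c_2$ meets its hypotheses, since $u_2$ is $C^{2,\alpha}$ and positive.
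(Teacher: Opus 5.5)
\textbf{Gap in the uniqueness step.} The problem is the direction of Nordmann's principle. Theorem~\ref{thm:Nord} concerns bounded non-negative \emph{sub}solutions, $(L+c)u\geq 0$, and concludes only that $(L+c)u=0$ and $\lambda=0$. It says nothing about supersolutions, and nothing about proportionality to a ground state.

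The version you state, with $-\L\phi\geq0$, is false. For $\L=\Delta-1$ one has $\lambda(-\L)=1$, yet $\phi\equiv1$ satisfies $-\L\phi=1\geq0$. Even at $\lambda=0$ with Dirichlet data it fails: $\min(\log|x|,1)$ on $\Rm^2\setminus\bar B_1$ is a bounded positive supersolution of $\Delta$ vanishing on the boundary, but it is not harmonic.

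In your non-existence step this does no harm, because you also mis-signed $-\L u$. In fact $-\L u=f(x,u)-\partial_sf(x,0)u\leq0$, so $u$ is a subsolution and the genuine theorem applies. The two errors cancel, and what remains is the paper's argument.

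In the uniqueness step, however, your computation $-(L+c_2)u_1=(c_1-c_2)u_1\geq0$ is correct. So $u_1$ really is a supersolution of $L+c_2$. Nothing you cite then gives $(c_1-c_2)u_1\equiv0$ or $u_1=\kappa u_2$.

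\textbf{How to fix it.} Swap the roles of the two solutions. Since $(L+c_1)u_2=(c_1-c_2)u_2\geq0$, $u_2$ is a bounded non-negative subsolution of $L+c_1$ vanishing on $\partial D$. Also $\lambda(-(L+c_1))\geq0$, because $u_1>0$ solves $(L+c_1)u_1=0$. Theorem~\ref{thm:Nord} then gives $(c_1-c_2)u_2\equiv0$, so $c_1\equiv c_2$.

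By strict monotonicity on $E$, this forces $u_1=u_2$ on $E$. Then $w=u_2-u_1\geq0$ solves $Lw+c_2w=0$ and vanishes at interior points, so $w\equiv0$ by the strong maximum principle. No ground-state proportionality is needed.

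This is equivalent to the paper's route. There the non-existence argument is applied to the difference $w=\ol u-u$, with reaction $h$ from \eqref{h}. That $w$ is a subsolution for $L+f(x,u)/u$, so the potential is built from the \emph{smaller} solution, exactly as in the fix above.

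\textbf{Minor point in the existence step.} $\lambda(-\L)<0$ gives a smooth bounded $D_n\Subset D$ with $\lambda(-\L;D_n)<0$, but not necessarily a ball. In a strip of width $w$ with $\pi^2/w^2<c<4j_{0,1}^2/w^2$, every inscribed ball has $\lambda(-(\Delta+c);B)>0$. Use $D_n$ as in Proposition~\ref{prop:maximal solution}~$(ii)$.
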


\begin{rmk}
In fact, is clear from the proof that
the existence part of the above theorem holds in greater generality. We say that $f$ is weak-KPP if it satisfies
\begin{equation}\label{eq:weakKPP f}
f(x,s)\leq f_s(x,0)s\quad \text{for  } \; x\in  D,\; s>0.
\end{equation}
Then if we replace the assumptions \eqref{eq:mediumKPP f}-\eqref{eq:strongKPP f x} 
with \eqref{eq:weakKPP f} along with the inequality in~\eqref{eq:weakKPP f} being strict 
for $x$ on a set of positive measure, the existence of a positive and bounded solution is still equivalent to $\lambda(-\mathcal{L})<0$.
\end{rmk}

It is evident that \eqref{eq:mediumKPP f} on its own, together with Assumptions \ref{assum:standing},
is insufficient for uniqueness, even in bounded domains. 
Indeed, for any given elliptic operator $-L$ on a bounded domain $D$, considering
$f(x,s):=\lambda(-L)s-((s-1)_+)^2$ one has that any sufficiently small multiple of the principal 
eigenfunction associated with $\lambda(-L)$ solves $-Lu=f(u)$.


\subsection{Long-time behaviour of reaction-diffusion 
equations}\label{sec:evolution}

The celebrated Fisher-KPP equation
\begin{equation}\label{eq:FKPP 1d}
\partial_t\eta=\Delta \eta+\eta(1-\eta),\quad (t,x)\in \Rm_{>0}\times \Rm,
\end{equation}
was introduced independently and at the same time by Fisher and Kolmogorov, Petrovskii and Piskunov in 1937 as a model for the spatial spread of an advantageous allele in a biological population \cite{KPP1937,Fisher1937}. This has since developed into the theory of reaction-diffusion equations, which in the self-adjoint heterogeneous setting reads:
\begin{equation}\label{eq:general reaction-diffusion}
\left\{\begin{array}{ll}
\partial_t\eta=\partial_i(a_{ij}(x)\partial_j \eta)+f(x,\eta), 
& (t,x)\in \Rm_{>0}\times  D,\\ 
\eta(t,x)= 0, & (t,x)\in\Rm_{>0}\times \partial  D,\\
\eta(0,x)=\eta_0(x),& x\in  D.
\end{array}\right.
\end{equation}
%
%
These model a wide variety phenomena, in fields such as chemistry, ecology, population genetics and sociology. 

\commentout{\begin{Luca}
\BS
For which domain, and in which sense the boundary condition?

Recall: 
$ D$ has locally Lipschitz boundary, by which we mean the following. For all $x\in \partial  D$ there exists $r>0$ (dependent upon $x$) such that we can write $B(x,r)\cap  D$ as
\[
\{(x',y)\in B(0,r):x'\in \Rm^{d-1},y\in \Rm, y>\phi(x')\},
\]
for some Lipschitz function $\phi$ (where $\phi$ and its Lipschitz constant may be dependent upon $x$), after some rotation and translation under which $x$ is mapped to $0$. Using this, we interpret the boundary condition in \eqref{s-a} to be that $u$ vanishes continuously along $\partial  D$.
\end{Luca}}

Here we assume that $ D$ has locally Lipschitz boundary, by which we mean the following. For all $x\in \partial  D$ there exists $r>0$ (dependent upon $x$) such that we can write $B(x,r)\cap  D$ as
\[
\{(x',y)\in B(0,r):x'\in \Rm^{d-1},y\in \Rm, y>\phi(x')\},
\]
for some Lipschitz function $\phi$ (where $\phi$ and its Lipschitz constant may be dependent upon $x$), after some rotation and translation under which $x$ is mapped to $0$. Using this, we interpret the boundary condition in \eqref{s-a} to be that $u$ vanishes continuously along $\partial  D$. 

The initial datum $\eta_0$ in \eqref{eq:general reaction-diffusion}
will always be assumed to be in $L^\infty(D)$, 
and it is understood to be attained in the $L^1_{loc}(\bar D)$ sense.
This guarantees that the solution exists and it is unique  
in the class of functions which are bounded in space, locally in time. 
This is the class of solutions we will always refer to.

The classification of positive, bounded steady states, i.e.~solutions of the elliptic equation \eqref{s-a},
is essential to understanding the long-run behaviour of the parabolic equation \eqref{eq:general reaction-diffusion}. 
It is natural to expect that under the strong Fisher-KPP condition, solutions  
always converge towards a steady state, and this is supported 
by all known results in  the literature, concerning specific cases (periodic setting, cylindrical domains, etc.).
Under the Assumption~\ref{assum:standing} and the 
weak-KPP condition \eqref{eq:weakKPP f}, 
the following are well-known, and can be deduced e.g.~using the proof of Proposition~\ref{prop:maximal solution} and the comparison principle.
\begin{enumerate}[$1.$]
\item If a solution 
of \eqref{eq:general reaction-diffusion} admits a long-time limit,
then the latter must be a steady~state.  
\item If there is no positive, bounded steady state (which is true in particular if ${\lambda(-\mathcal{L})>0}$), 
then $\eta(t,x)\ra 0$ locally uniformly as $t\ra\infty$, for any initial datum~$\eta_0\geq 0$.   
\item If $\lambda(-\mathcal{L})<0$ and the positive, bounded steady state is unique, then this 
is the attractor of \eqref{eq:general reaction-diffusion} for all 
non-negative and non-trivial initial data.
\end{enumerate}

Owing to the above properties, we have the following byproduct of our elliptic results
in not-too-high dimension, Theorems \ref{theo:nonexistence at criticality} and \ref{theo:uniqueness}.

\begin{cor}
Suppose that Assumption~\ref{assum:standing} holds and that $\partial  D$ is locally Lipschitz.
Suppose further that $f$ satisfies \eqref{concav1} 
for some $\ell>0$, and that the space dimension satisfies $d\leq 2+\frac{4}{\ell}$. 
Let $\eta$ be a bounded solution of~\eqref{eq:general reaction-diffusion} 
with $\eta_0\geq0$ not almost everywhere zero. 
Then the following occurs:
\begin{enumerate}[$(i)$]
\item if $\lambda(-\mathcal{L})\geq0$ then $\eta(t,x)$ converges to $0$
as $t\to+\infty$, locally uniformly in $x\in \bar D$;
\item if $\lambda(-\mathcal{L})<0$ then $\eta(t,x)$ converges to 
the unique positive bounded solution of~\eqref{s-a} as $t\to+\infty$, locally uniformly in $x\in \bar D$.
\end{enumerate}
\end{cor}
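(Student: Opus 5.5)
The plan is to combine Theorems~\ref{theo:nonexistence at criticality} and~\ref{theo:uniqueness} with the three well-known properties listed just before the corollary. By item 2 of the preceding remarks, \eqref{concav1} implies \eqref{eq:existence condition}, and hence also the weak-KPP condition \eqref{eq:weakKPP f}. So the listed properties 1--3 are available, and both elliptic theorems apply in dimension $d\leq 2+\frac4\ell$.

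\emph{Case (i): $\lambda(-\mathcal{L})\geq0$.} Theorem~\ref{theo:nonexistence at criticality} gives that \eqref{s-a} has no positive bounded solution. Property 2 then yields $\eta(t,\cdot)\to0$ locally uniformly. To make this concrete I would use the standard sandwich argument.
\begin{itemize}
\item Let $\bar\eta$ solve \eqref{eq:general reaction-diffusion} with initial datum $\max(S,\|\eta_0\|_\infty)$, which is a supersolution by \eqref{S}.
\item By the comparison principle, $\bar\eta$ is non-increasing in $t$. Parabolic estimates (Schauder interior estimates together with the barrier at the Lipschitz boundary) show that it converges locally uniformly on $\bar D$ to a non-negative bounded steady state, namely the maximal solution of Proposition~\ref{prop:maximal solution}.
\item Since no positive steady state exists, this limit is $0$ by the strong maximum principle. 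As $0\leq\eta\leq\bar\eta$, we conclude $\eta\to0$.
\end{itemize}

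\emph{Case (ii): $\lambda(-\mathcal{L})<0$.} Theorem~\ref{theo:nonexistence at criticality} gives existence of a positive bounded steady state, and Theorem~\ref{theo:uniqueness} gives that it is unique; call it $u^*$. Property 3 then gives convergence to $u^*$.

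Spelling out property 3 is where the main work lies. The upper bound comes from $\bar\eta$ as above, which decreases to the maximal steady state, here equal to $u^*$. For the lower bound I would argue as follows.
\begin{itemize}
\item Since $\lambda(-\mathcal{L})<0$, by the approximation of the generalised principal eigenvalue on large bounded smooth subdomains $D_R\Subset D$ (a standard BNV property), we have $\lambda(-\mathcal{L};D_R)<0$ for $R$ large.
\item Let $\phi_R$ be the Dirichlet eigenfunction on $D_R$, extended by $0$. Because $f(x,s)=\partial_sf(x,0)s+o(s)$ uniformly, by \eqref{fLip} and $C^1$ regularity, $\kappa\phi_R$ is a generalised subsolution for $\kappa$ small.
\item Since $\eta_0\not\equiv0$, the strong maximum principle gives $\eta(1,\cdot)>0$ in $D$, so $\eta(1,\cdot)\geq\kappa\phi_R$ after shrinking $\kappa$.
\item The solution $\underline\eta$ starting from $\kappa\phi_R$ is non-decreasing in $t$ and converges to a positive bounded steady state. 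By uniqueness this is $u^*$.
\end{itemize}
Comparison then squeezes $\eta(t+1,\cdot)$ between $\underline\eta(t,\cdot)$ and $\bar\eta(t+1,\cdot)$, both of which tend to $u^*$.

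The delicate points are the local uniform convergence up to $\bar D$, which uses the Lipschitz boundary and standard boundary regularity to pass to the limit in the boundary condition, and the approximation $\lambda(-\mathcal{L};D_R)\to\lambda(-\mathcal{L})$. I would take both as the standard facts invoked in the text ("well-known, via Proposition~\ref{prop:maximal solution} and the comparison principle").
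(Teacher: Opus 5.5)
Your proposal is correct and is essentially the paper's argument. The paper states the corollary as a direct byproduct of Theorems~\ref{theo:nonexistence at criticality} and~\ref{theo:uniqueness} combined with the three listed properties, and you justify those properties by the same monotone sub/supersolution constructions as in Proposition~\ref{prop:maximal solution}. One small precision: the uniform expansion $f(x,s)=\partial_sf(x,0)s+o(s)$ you need follows from continuity of $\partial_sf$ on the compact set $\bar D_R$, not from \eqref{fLip}.
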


This result completely describes the long-time behaviour of solutions to~\eqref{eq:general reaction-diffusion} 
when the dimension is smaller than or equal to the threshold: solutions of
are always convergent; whether they converge to  the 
positive steady state (persistence) or to 0 (extinction) 
is solely determined by the sign
of the principal eigenvalue.

Instead, in higher dimension, we show that there are solutions which do not converge
in the long run. 
In fact we construct such solutions as soon as there is existence at criticality
or multiplicity of positive steady states
--~which can occur independently of the sign of~$\lambda(-\mathcal{L})$,
according to Theorems~\ref{thm:c-e} and~\ref{thm:Hardy_spercritical}.
This is established in the following result, that we prove for locally Lipschitz 
domains, and without requiring \eqref{S}, that is, replacing
Assumption~\ref{assum:standing} with the following.
\begin{assum}\label{assum:standing_Lip}
The domain $ D$ is a non-empty, open, connected subdomain of $\Rm^d$
with $\partial D$ locally Lipschitz. 
The coefficient $(a_{ij})$ is uniformly elliptic, bounded and $C^1(\bar  D)$. The reaction term $f$ is in $C^1(D\times \Rm_{\geq 0})$ and satisfies
\[
f(x,0)=0\quad\text\quad\text{for
all}\quad x\in D,
\]
\[
\|\partial_s f\|_{L^\infty(D\times(0,S'))}
 <\infty\quad\text{for all $S'<\infty$.}
 \] 
\end{assum}

\begin{theo}\label{thm:oscillating}
    Suppose that Assumption~\ref{assum:standing_Lip} holds
    and that $f$ satisfies the KPP-type condition
    \eqref{eq:mediumKPP f}, together with the strong-KPP
    condition~\eqref{eq:strongKPP f x} for all $x$ in some set $E\subseteq  D$ with positive measure.
Suppose further that one of the following holds:
\begin{enumerate}[$a)$]
\item the stationary problem \eqref{s-a}
admits more than one positive bounded solution;
\item the stationary problem \eqref{s-a}
admits a positive bounded solution and moreover $\lambda(-\mathcal{L})=0$.
\end{enumerate}
Then there exists a positive bounded solution $\eta$
of~\eqref{eq:general reaction-diffusion} 
such that $\eta(t,x)$ does not converge as $t\to+\infty$, for any $x\in D$.
\end{theo}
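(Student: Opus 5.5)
The idea is to build, in each of the two cases, an initial datum whose solution keeps sweeping between two distinct ``attracting'' profiles. To do this we glue together, on larger and larger balls and at later and later times, pieces of the dynamics that approach two different limits. The basic tools are the comparison principle for \eqref{eq:general reaction-diffusion} and locally uniform convergence on compacts for initial data that are well-ordered with respect to steady states.

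In case $a)$, let $\bar u$ be the maximal positive bounded solution from Proposition~\ref{prop:maximal solution}, and let $u\neq \bar u$ be another positive bounded solution, so $u\le\bar u$ with $u<\bar u$ somewhere. The point of \eqref{eq:mediumKPP f} with strictness on $E$ is the following two facts.
\begin{enumerate}[(1)]
\item Any solution starting from $\eta_0\ge \bar u$ (for instance $\eta_0\equiv \max(S,\sup\bar u)$ truncated near the boundary) converges to $\bar u$ locally uniformly.
\item A solution starting from $\eta_0=u\Ind_{B_R}$, or any datum lying below $u$, stays $\le u$ for all time.
\end{enumerate}
In case $b)$, with $\lambda(-\mathcal L)=0$, the roles are played by $\bar u$ and $0$. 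For compactly supported data below $\bar u$, the solution with small compactly supported data tends to $0$ locally uniformly. The reason is that any positive limit would have to be a steady state below something. To make this rigorous I would argue that the $\omega$-limit of a monotone-in-time solution (starting from a small subsolution-free datum, e.g.\ a supersolution of the linearised problem of the form $\delta\phi$ with $\phi$ a generalised eigenfunction) is a steady state. The strong-KPP condition then forces this steady state to be $0$, by the usual comparison $\int (f(u)/u - \partial_sf(0)) u\phi<0$ argument localised on $E$.

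With these two ``attractors'' in hand, I build $\eta_0$ inductively. Fix a point $x_0$, the lower steady state $w$ ($w=u$ or $w=0$), and the upper one $\bar u$, with $\bar u(x_0)-w(x_0)=:3\delta>0$. Choose radii $R_1<R_2<\dots\to\infty$ and times $T_1<T_2<\dots$, and set $\eta_0$ equal to $\bar u$-like (large) on annuli $A_{2k}$ and equal to $w$ on annuli $A_{2k+1}$. By finite speed of propagation estimates (Gaussian bounds for the heat kernel of $L$ plus the Lipschitz bound \eqref{fLip} on $f$, giving a Gronwall-type control of the difference of two solutions on $[0,T]$ by data far away), the solution on a neighbourhood of $x_0$ up to time $T_k$ depends only on $\eta_0$ restricted to $B_{R_k}$, up to an error $\le\delta/2$ once $R_{k+1}\gg T_k$.

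Inductively, after choosing $\eta_0$ on $B_{R_k}$, I extend it on the next annulus by the opposite profile on a huge region. Using (1) or (2) for data coinciding with the full-space profile on a huge ball, and the comparison principle, I get $T_{k+1}$ such that $\eta(T_{k+1},\cdot)$ is within $\delta$ of the opposite steady state near $x_0$. Then I choose $R_{k+1}$ so large that later modifications do not affect times $\le T_{k+1}$. Hence $\eta(T_k,x_0)$ alternates between values $\ge\bar u(x_0)-\delta$ and $\le w(x_0)+\delta$, so $\eta$ does not converge. The same holds at every $x$ in place of $x_0$ if, at each step, one works on the growing compact set $\bar D\cap B_k$ instead of at the single point.

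The main obstacle is the claim that data equal to the lower profile $w$ on a huge ball (and possibly $\bar u$-like far away, which is bounded above by $\bar u$) still produce a solution close to $w$ near $x_0$ for a long prescribed time. For that I use the quantitative localisation estimate: the difference of two bounded solutions is bounded by $e^{Ct}$ times the heat-kernel mass of the region where the data differ, which is tiny when $R\gg \sqrt{t}\,$. I expect the second delicate point to be the convergence to $0$ (resp.\ to $\bar u$) from compactly supported (resp.\ large) data in case $b)$; this is where \eqref{eq:strongKPP f x} on $E$ is used.
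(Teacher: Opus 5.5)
There is a genuine gap. Your induction needs, at every step, that a compactly supported perturbation of a steady state is washed out in time, and the proposal never establishes this. Consider the passage from a $\bar u$-phase to a $w$-phase. The datum is the already fixed $\eta_0|_{B_{R_k}}$, which contains $\bar u$-annuli (in particular around $x_0$), glued to $w$ on a huge annulus. It does \emph{not} coincide with $w$ on a ball around $x_0$, contrary to what you write when you identify the ``main obstacle''.

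Localisation (Gaussian bounds, finite speed of propagation) only says that what lies beyond $B_{R_{k+1}}$ is irrelevant up to time $T_{k+1}$. It cannot remove the influence of the perturbation sitting on $B_{R_k}$, right next to $x_0$. For $\eta(T_{k+1},\cdot)$ to be close to $w$ near $x_0$, you need the solution with datum equal to $w$ outside a compact set to converge to $w$ as $t\to\infty$, locally uniformly. Your fact (2), that data below $u$ stay below $u$, is irrelevant here, because the datum exceeds $w$ on $B_{R_k}$. Symmetrically, in the $\bar u$-phases the datum lies below $\bar u$ on the earlier $w$-annuli. So fact (1), which concerns data $\ge\bar u$, only gives the upper bound; the lower bound requires convergence to $\bar u$ of compact perturbations from below.

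Your case $b)$ sketch also fails, for two reasons. First, it treats only small data, whereas the perturbation to be absorbed has size $\bar u$. Second, the decreasing solution from $\delta\phi$ converges to some steady state below $\delta\phi$, and nothing forces that limit to be $0$. An identity of the form $\int(f(x,p)-\partial_sf(x,0)p)\phi=0$ for a steady state $p$ requires integrability and vanishing of boundary terms at infinity. If it held for all steady states, it would contradict the very existence of $\bar u$ at $\lambda(-\mathcal{L})=0$, which is your hypothesis in case $b)$.

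The missing ingredient is the paper's Theorem~\ref{thm:cptperturb}~$(i)$ together with Corollary~\ref{cor:stability}. For compactly supported $\eta_0$ and $\lambda(-\mathcal{L})\ge0$, one uses the Lyapunov functional $\gamma(t)=\int_D\eta(t,x)\varphi(x)\,dx$, where $\varphi$ is a generalised principal eigenfunction. It has three properties:
\begin{itemize}
\item it is finite precisely because $\eta_0$ has compact support;
\item it is non-increasing by self-adjointness and the weak-KPP inequality;
\item it drops by a fixed amount on every time interval where $\eta$ is not small on a compact $K\subset E$, by interior and boundary Harnack estimates.
\end{itemize}
This forces $\eta\to0$. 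For a positive steady state $u$, apply the same argument to $z=\pm(\eta-u)$. Using \eqref{eq:mediumKPP f}--\eqref{eq:strongKPP f x}, both are subsolutions of a KPP-type problem whose linearisation $L+f(x,u)/u$ has non-negative generalised principal eigenvalue (test with $u$ itself). This is where the self-adjoint and KPP structure is really used; without it the inner annuli need not be forgotten.

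With this stability result in hand, your gluing scheme goes through. The paper implements it slightly differently: it uses the exact comparison sandwich $\underline\eta^n\le\eta\le\overline\eta^n$, with data agreeing with $\eta_0$ on a ball and equal to $u$ (resp.\ $v$) outside. Continuous dependence on the data as the radius grows then replaces your quantitative Gaussian localisation estimates.
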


The above theorem, combined with Theorem~\ref{thm:c-e},
gives an example of non-convergent solution.
To our knowledge, this had never been observed before for Fisher-KPP equations.
Examples of non-convergent \commentout{(nor even {\em quasi-convergent}) }positive solutions have been obtained
by Pol\'a\v cik and Yanagida in~\cite{PolYan} for the equation $\partial_t\eta=\Delta \eta +\eta^p$ in $\R^3$, for suitable $p>1$.
Those solutions are localised in space, i.e.~converge to $0$ as $|x|\to\infty$.

The solution we construct in the proof of Theorem \ref{thm:oscillating} is also localised in space,
and oscillates in time
between two distinct steady states, namely, its {\em $\omega$-limit set} contains 
the two steady~states.
The construction works 
as soon as the two steady states are (neutrally) stable,
in a sense that will be made precise in Section \ref{sec:uniq from non-exis}.
Then the conclusion holds because positive solutions are always stable in the Fisher-KPP case, as we show in
Section \ref{sec:uniq from non-exis} below.
In the case $b)$, one of the two steady states is the $0$ function,
therefore, in such case, the solution 
is locally close to $0$ (extinction) in some large time intervals, and locally close to 
a positive steady state (persistence) in some others.
From the modelling perspective, this means that even if the observed density $\eta$ is very low
for a long time, in high dimensions the mass may eventually concentrate and rise up by drawing from infinity.

For the proof of Theorem \ref{thm:oscillating} we exploit 
the stability of the steady states to design an initial datum for which the solution
is attracted by each one of them along a suitable sequence of time intervals. 
The key point is that stable steady states
(i.e.~such that~the linearised operator has non-negative principal eigenvalue) are locally attractive for
compactly supported perturbations.
The stability result relies on 
the self-adjoint structure of the operator $L$, and might not
hold in the non-self-adjoint case. 
We actually completely describe the long-time
dynamics of compactly supported perturbations, locally in space, as follows:

\begin{theo}\label{thm:cptperturb}
    Suppose that Assumption~\ref{assum:standing_Lip} holds.
Let $\eta$ be a solution of~\eqref{eq:general reaction-diffusion},
with $\eta_0\geq0$ not almost everywhere zero and
having compact support in $\bar D$.
Then the following occurs:
\begin{enumerate}[$(i)$]
\item under the weak-KPP assumption \eqref{eq:weakKPP f},
with a strict inequality for all $x$ in some set $E\subseteq  D$ with positive measure, 
if $\lambda(-\L)\geq0$ then $\eta(t,x)$ converges to $0$
as $t\to+\infty$, locally uniformly in $x\in \bar D$;
\item 
under the KPP-type condition
    \eqref{eq:mediumKPP f} and the strong-KPP
    condition~\eqref{eq:strongKPP f x} for all $x$ in some set $E\subseteq  D$ with positive measure, if $\lambda(-\mathcal{L})<0$
then $\eta(t,x)$ converges to 
the minimal positive bounded solution of~\eqref{s-a} as $t\to+\infty$, locally uniformly in $x\in \bar D$.
\end{enumerate}
\end{theo}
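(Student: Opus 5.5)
We prove the two parts separately, each by comparison with suitable sub- and supersolutions. Throughout, $\eta$ stays bounded: by comparison it lies below $\max(\|\eta_0\|_\infty,M)$ for a suitable constant $M$. In part (ii) this uses \eqref{eq:mediumKPP f}; in part (i) the weak-KPP bound gives at most exponential growth, which is enough locally in time.

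\emph{Part (i).} We build an upper barrier. Because $\eta_0$ has compact support, it is dominated by a multiple of a positive generalised eigenfunction $\phi$ of $-\L$ in a large ball, extended by a suitable supersolution. The simplest choice is the following.

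\begin{itemize}
\item Let $\bar\eta$ solve the parabolic problem with initial datum $\bar\eta_0:=\max(\|\eta_0\|_\infty,S')$ on $D$, for a constant $S'$ larger than the threshold past which $f(x,s)<0$ (if such a threshold exists). Otherwise use the weak-KPP bound to compare with the linear flow.
\item Under \eqref{eq:weakKPP f}, the function $\bar\eta$ is nonincreasing in $t$ whenever $\bar\eta_0$ is a supersolution. It therefore converges to a nonnegative bounded steady state $u^*\ge\limsup\eta$.
\item If $\lambda(-\L)\geq0$, we show $u^*\equiv0$ using the weak-KPP inequality with strict inequality on $E$. Suppose $u^*>0$. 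Then $-\L u^*=f(x,u^*)-\partial_sf(x,0)u^*\le0$, with strict inequality on $E$.
\end{itemize}

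The self-adjoint structure now gives a contradiction, via the standard characterisation that $\lambda(-\L)\geq0$ forbids such a strict positive subsolution. Concretely, a positive bounded $u^*$ with $-\L u^*\le0$, not an equality, would force $\lambda(-\L)<0$; this is the analogue of a critical maximum principle of Nordmann type. This step uses only boundedness of $u^*$ and compactness, not dimension.

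Without assumption \eqref{S}, we instead start from a compactly supported datum. We compare $\eta$ with solutions in large balls $B_R\cap D$. On these balls $\lambda(-\L;B_R\cap D)>\lambda(-\L)\geq0$, so the linear Dirichlet flow decays. We then pass to the limit using monotonicity in $R$. This is the main obstacle for (i): obtaining local decay on all of $D$ rather than on $B_R$. Our plan is to use the first approach, namely the decreasing solution from a supersolution, and to bound the compactly supported $\eta_0$ by a supersolution built from the Dirichlet eigenfunction plus the $L^\infty$ bound.

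\emph{Part (ii).} For the lower bound, $\lambda(-\L)<0$ implies $\lambda(-\L;B_R\cap D)<0$ for $R$ large. Take the principal eigenfunction $\varphi_R$ and $\epsilon$ small. Then $\epsilon\varphi_R$, extended by $0$, is a compactly supported subsolution. By the strong maximum principle, $\eta(1,\cdot)>0$, so $\eta(1,\cdot)\geq\epsilon\varphi_R$. The solution starting from $\epsilon\varphi_R$ increases to the minimal steady state above it, which is a positive steady state. It is therefore at least the minimal positive bounded solution $\underline u$; the existence of $\underline u$ follows from the analogue of Proposition~\ref{prop:maximal solution}. Hence $\liminf\eta\geq\underline u$.

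For the upper bound we use stability of $\underline u$. Since $\eta_0$ has compact support, $\eta_0\le\underline u+w_0$ for some compactly supported $w_0$. The linearisation of $\underline u$ has principal eigenvalue $\geq0$, which is where self-adjointness and \eqref{eq:mediumKPP f} enter, as indicated in Section~\ref{sec:uniq from non-exis}. We then aim to show that the perturbation $\eta-\underline u$ dies out locally. For this we use a supersolution of the form $\underline u+\delta(t)\psi$, or an energy estimate: compactly supported $L^2$ data under a nonpositive self-adjoint operator decay locally.

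This local attractivity of $\underline u$ for compactly supported perturbations is the hardest step of part (ii). Our first attempt will be the $L^2$/energy argument, which gives $\int_{B_R}(\eta-\underline u)_+^2\to0$, upgraded to local uniform convergence by parabolic estimates.
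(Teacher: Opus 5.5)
Your argument for (i) rests on the claim that a positive bounded steady state $u^*$ with $-\mathcal L u^*\le 0$, strictly on $E$, forces $\lambda(-\mathcal L)<0$, ``using only boundedness, not dimension''. This is false. It is exactly the critical maximum principle, which holds on bounded domains and, by Nordmann's result, for $d\le 2$, but fails in general for $d\ge 3$. For instance, $u=2-(1+|x|^2)^{-1/2}$ on $\R^3$ is positive and bounded and satisfies $\Delta u=3(1+|x|^2)^{-5/2}>0$, although $\lambda(-\Delta)=0$. In the nonlinear setting, Theorem~\ref{thm:c-e} gives, for $d>2+4/\ell$, positive bounded solutions of $\Delta u+c(x)u-u^{1+\ell}=0$ with $\lambda(-\mathcal L)=0$. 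That reaction is weak-KPP with strict inequality everywhere.

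In that situation your barrier $\bar\eta$ decreases to the maximal steady state, which is positive, so $\limsup\eta\le u^*$ yields nothing. Starting instead from $\min(C\varphi,M)$ does not help, since you would still need the limiting steady state to vanish, which is the same false principle. Comparison with Dirichlet problems on $B_R\cap D$ gives lower bounds, not upper bounds. The conclusion of (i) holds only because $\eta_0$ has compact support: in the setting of Theorem~\ref{thm:oscillating} $b)$ a positive steady state coexists with $\lambda(-\mathcal L)=0$, yet compactly supported data still die out. Your argument never exploits the compact support.

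The missing idea is a Lyapunov functional that is finite precisely because of the compact support. The paper uses $\gamma(t)=\int_D\eta(t,x)\varphi(x)\,dx$, with $\varphi$ the generalised principal eigenfunction of $-\mathcal L$. Self-adjointness and $\lambda(-\mathcal L)\ge0$ give $\gamma'(t)\le\int_D(f(x,\eta)-\partial_sf(x,0)\eta)\varphi\,dx\le 0$, which also yields local bounds on $\eta$ via Harnack. If $\eta\not\to0$ locally uniformly, the interior and boundary Harnack inequalities (this is where the Lipschitz boundary enters) give $\eta\ge\varepsilon/C$ on a compact $K\subset E$ of positive measure for $t\in[t_n+1,t_n+2]$. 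Then $\gamma'\le-\delta<0$ on infinitely many unit intervals, contradicting $\gamma\ge0$. The energy $\int_D\eta^2$, together with the Rayleigh-quotient characterisation of $\lambda(-\mathcal L)$ available in the self-adjoint case, could serve the same purpose.

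In (ii), your lower bound coincides with the paper's. For the upper bound, the paper compares $\eta$ with the solution $\bar\eta$ starting from $\max(\eta_0,\underline u)$ and applies (i) to $z=\bar\eta-\underline u\ge0$. This $z$ is compactly supported at $t=0$ and is a subsolution for a reaction $h$ that is weak-KPP with linearisation $L+f(x,\underline u)/\underline u$. That linearisation has generalised principal eigenvalue $\ge0$, as seen by testing with $\underline u$, and $h$ is strictly below it on $E$ thanks to \eqref{eq:strongKPP f x}; this is Corollary~\ref{cor:stability}.

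Your proposed justification, that compactly supported $L^2$ data decay locally under a nonpositive self-adjoint operator, is false for the linearised flow. Indeed $\underline u$ lies in the kernel of $L+f(x,\underline u)/\underline u$, so, e.g., on a bounded domain that flow converges to a nonzero multiple of $\underline u$. The decay must come from the strict KPP dissipation on $E$, that is, from the same Lyapunov-plus-Harnack argument as in the corrected (i). Once (i) is proved that way, (ii) follows as in the paper.
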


The definition and the existence of the minimal solution are given in 
Section~\ref{sec:extremal}.
We will deduce from Theorem \ref{thm:cptperturb} $(i)$
that positive bounded steady states are locally attractive with respect 
to compact perturbations, see Corollary \ref{cor:stability}
below.
\commentout{
\bigskip

{\bf Questions:} 
\begin{itemize}
\item
When $\lambda(-\mathcal{L})\geq0$, is there
Global extinction or can there be Global survival
by "escaping to infinity"?

\item
can we prove that if there is a unique stationary solution 
then all solutions are convergent?

\end{itemize}

}

\subsection{Overview of the literature}

To our knowledge, the first Liouville-type result for KPP-type equations was due to Rabinowitz \cite{Rabinowitz1971}, who established uniqueness for strong-KPP $f$ on bounded domains. This was later extended by Berestycki to non-self-adjoint operators \cite{Berestycki1981}. \commentout{Here the boundedness of the domain precludes the development of structure ``at infinity'', changing the phenomenological picture one expects. }On the other hand, if one considers the classical FKPP equation \eqref{eq:FKPP 1d} in a moving frame of reference (speed $c\geq 2$), then both the constant $1$ function and the travelling wave for speed $c$ are positive and bounded stationary solutions, so one cannot similarly remove the self-adjointness assumption on unbounded domains.\label{discussion of bounded domains}\commentout{ In subsection \ref{}, we shall outline an argument that the assumption of self-adjointness can be replaced with a Lyapunov-type criterion \OT{Detail/come back to}.}

Much of the literature has focused on the case of homogeneous equations, so that $f=f(u)$ (and $a=\text{Id}$), possibly with an inhomogeneous domain. In this context, we say a reaction $f$ is \textit{positive} if $f(0)=f(1)=0$, $f_{\lvert_{(0,1)}}>0$ and $f_{\lvert_{(1,\infty)}}<0$, which generalises the notion of KPP reaction in a homogeneous setting. If the reaction term also satisfies \eqref{eq:strongKPP f x} at some (hence all) $x\in  D$, then we say it is a strong-KPP reaction. 

For positive reactions on the whole space $\Rm^d$, the constant $1$ function is the unique positive, bounded steady state because of the ``hair-trigger effect'' due to Aronson and Weinberger \cite{Aronson1978}. Uniqueness has similarly been established for strong-KPP reactions with Neumann boundary conditions by Berestycki, Hamel and Nadirashvili in \cite{BHN2010} under a geometric condition on the domain, and then later by the first author without this geometric condition \cite{Rossi2020}. 


The situation becomes much trickier in the presence of a killing boundary, or with inhomogeneity in the reaction term. The first Liouville results for structured, unbounded domains were due to Berestycki, Caffarelli and Nirenberg, who established uniqueness for strong-KPP reactions on cylinders \cite{BCN1}, and for positive reactions on uniformly Lipschitz epigraphs \cite{Berestycki1997}, as part of a broader, extensive study of the qualitative properties of steady states of reaction-diffusion equations \cite{Berestycki1993,BCN1,Berestycki1997}. These are complemented by the much more recent work of Luo and Lu \cite{Lou2019}, who established the uniqueness of steady states for KPP reactions on cones. Recently, Berestycki and Graham \cite{Berestycki2026} studied the effect of geometry on uniqueness for positive reactions, establishing sufficient geometric conditions for uniqueness and counterexamples to the contrary, using the ``stable-compact'' method they introduce. We note that the positive reactions for which they establish non-uniqueness are not strong-KPP. These works are orthogonal to our own, in that they address the question of how uniqueness depends upon the geometry, whilst we answer the question of how uniqueness depends upon the relationship between the dimension and the imposed KPP assumption. 


There has been much work on periodic media, focusing on the asymptotic propagation speed \cite{Gartner1979,Weinberger2002,BHR2005,BHN2005}. In this context, Berestycki, Hamel and Roques have established uniqueness of the stationary solution for strong-KPP reactions \cite{BHR2005}. Other kinds of inhomogeneity have been considered. Berestycki, Hamel and Nadin have established, in the context of general time-dependent inhomogeneous reactions in the whole space \cite{BHN2008}, the uniqueness of a uniformly-positive entire solution that is analogous to steady states in this context. 

Berestycki, Hamel and the first author \cite{BHRossi} considered inhomogeneous equations on the whole space, establishing spectral sufficient conditions under which the steady state is unique, for strong-KPP type reactions. In recent work \cite{Berestycki2025}, Berestycki and Graham have established uniqueness of the steady state under a general condition extending the aforementioned spectral condition, for homogeneous reactions but with an inhomogeneous Dirichlet boundary being the source of inhomogeneity. They conjectured that this spectral assumption is unnecessary in this and their subsequent paper (\cite[Conjecture 1.4]{Berestycki2025}, \cite[Open Question 3]{Berestycki2026}). The sharp threshold established in this paper shows that, for arbitrary dimension, their assumption is necessary, and that it's only for dimension below the threshold that their assumption can be completely dispensed with. Indeed, even in the homogeneous setting they considered, Theorem \ref{theo:existence nonuniq d geq 7} shows that their conjecture is false in $d\geq 7$ for the classical reaction $f(u)=u-u^2$. In fact, the counterexamples in Section \ref{section:infinite} show that this condition is also not sharp in dimensions above the threshold, see Question \ref{question:sharp characterisation}. 

We note that \cite{Berestycki2025} treats strong-KPP reactions for both Dirichlet and Robin boundary conditions in a unified framework. The latter share much of the same difficulties as the former, with the particular added difficulty that one loses the comparison principle with arguments that involve moving the boundary. Consequentially, the results of \cite{Berestycki2026} are restricted to Dirichlet boundary conditions, with the corresponding questions for Robin boundaries being posed as open problems (see \cite[Section 8.2]{Berestycki2026}). Whilst we restrict our attention to Dirichlet boundary conditions, our theorems and proofs would remain valid for Robin or Neumann (locally $C^1$) boundaries.

One can also ask what kind of boundary regularity is needed. The classical results of Rabinowitz on bounded domains \cite{Rabinowitz1971}, along with the recent work of Berestycki and Graham on unbounded domains \cite{Berestycki2025}, require a uniformly $C^{2,\alpha}$ boundary. This gives an interior ball condition, and hence the Hopf lemma, which is important to their arguments. One would also run into problems formulating the spectral condition in \cite{Berestycki2025} without some appropriate uniform regularity. It is natural to ask to what extent this is simply a technical obstruction. Berestycki and Graham conjectured that steady states are unique on bounded Lipschitz domains, which they proved in \cite[Theorem 7.1]{Berestycki2026} using their stable-compact method. This was later proven for general bounded domains for the classical nonlinearity $u(1-u)$ by the second author and Maillard using probabilistic methods \cite[Theorem 4.19]{Maillard2025}. We extend uniqueness to general strong-KPP reactions on general bounded domains in Section \ref{section:bounded}; this is essentially given by the critical maximum principle in \cite{BNV1994}, combined with some simple arguments in Section \ref{sec:uniq from non-exis}. \commentout{{\BS we should say that this is essentially proved in \cite{BNV1994}. I wouldn't state those as
our Theorems}\OT{I think saying it's essentially proved in \cite{BNV1994} is a bit strong, since Henri wrote this as a conjecture in \cite{Berestycki2025}. I may be wrong, but I think the issue is they didn't have the non-existence $\ra$ uniqueness argument because they were working along the proof strategy of Rabinowitz? Maybe the correct thing to say is something like `We show that, in fact, uniqueness for general strong-KPP reactions on general bounded domains follows as a consequence of the critical maximum principle in \cite{BNV1994}.} }

The question of unbounded domains with nonsmooth boundaries seems more subtle. Even if the boundary is locally smooth, this may naturally be non-uniform, which one might imagine could cause problems ``at infinity''. Our proofs (see Section \ref{thm:Liouv6}) use novel arguments avoiding any terms in our estimates coming from the boundary regularity, hence we are able to prove our results for totally general open domains in Section \ref{section:Liouville}.

Finally, we note that other reactions can exhibit wholly different phenomena. Recently, Berestycki, Graham and Wei \cite{Berestycki2025b} considered bistable and certain field-type nonlinearities on epigraphs. For bistable reactions, they established a transition from there being a unique stable solution when the epigraph is bounded from below, to there being infinitely many solutions once the aperture is greater than $\pi$. For the field-type reactions they consider, the transition is from no solution to infinitely many. The steady states of bistable reactions is a deep and extensive topic, for instance being the subject of the celebrated De Giorgi conjecture \cite{DeGiorgi1979,Savin2009,DelPino2008}. 

\subsection{Dimension threshold for the recurrence of BBM}\label{section:probabilistic results}

Branching processes are a staple of probability theory, whose study goes back to the 19th century work of Bienaym{\'e} \cite{bienayme1845loi} and Galton and Watson \cite{GaltonWatson1875} on the survival of single-type branching processes. Bienaym{\'e} stated without proof that such a (non-trivial) branching process survives with positive probability if and only if the mean offspring is greater than $1$, the first correct proof of which was due to Haldane \cite{Haldane1927}; see Kendall \cite{Kendall1975} for a historical account. Branching processes have since found applications into a wide array of areas, including random graphs, quantum gravity, turbulence, front propagation, disordered systems, stochastic geometry, nuclear fission, time series, and population biology. Whereas Bienaym{\'e}-Galton-Watson processes are well-understood since the foundational work of Kolmogorov, Yaglom and Sevast'yanov \cite{Kolmogorov1938,Yaglom1947,Sevastyanov1951}, the modern theory of branching processes now focuses on spatial branching processes, of which branching Brownian motion (BBM) is the canonical example; see \cite{Arguin2011,Bramson1978,Lalley1987,Berestycki2024,Aidekon2013,Arguin2013} for some landmark results on BBM. 

We consider a BBM $\bar X_t=(X^1_t,\ldots,X^{N_t}_t)$ on a non-empty, connected, open domain $D\subseteq \Rm^d$, where particles evolve as independent Brownian motions (with diffusivity $\sqrt{2}$) and are killed instantaneously upon contact with $\partial D$, i.e. when $X^i_{t-}\in \partial D$. Thus $N_t$ is the number of particles alive at time $t$. We make no assumptions on the boundary $\partial D$. To specify the branching mechanism, we take bounded and locally H{\"o}lder continuous
\[
r(x),\; p_n(x), \quad 0\leq n\leq n_{\max},
\]
for some $n_{\max}<\infty$, such that 
\begin{equation}\label{eq:pn equiv 1}
\sum_{n=0}^{n_{max}}p_n(x)\equiv 1.
\end{equation}
Then independently at Poisson rate $r(x)$, each particle ($X^i_{t}$ say) splits into $n$ particles born at its location, with $n$ chosen independently from the distribution $(p_0,\ldots,p_{n_{\max}})(x)$. The particle that branches is removed, so that with probability $p_0(x)$ we are simply removing it without replacement, and with probability $p_1(x)$ essentially nothing happens. We write $\Pm_x$ for the probability measure under which $\Pm_x(\bar X_0=(x))=1$, for $x\in D$ (i.e. $\bar X_t$ initially consists of a single particle at $x$). 
\begin{defin}\label{defin:nontrivial and unif pve branching}
We say that the branching mechanism is somewhere branching if
\begin{equation}\label{eq:non-trivial branching}
\lvert \{x\in D:\sum_{n=2}^{n_{\max}}p_n(x)>0\}\rvert>0.
\end{equation}
We say that the branching mechanism is \textit{uniformly branching} if
\begin{equation}\label{eq:unif branching}
    \inf_{x\in D}r(x)\sum_{n=2}^{n_{\max}}p_n(x)>0.
\end{equation}
\end{defin}
\begin{rmk}
The boundedness of the offspring distribution is a simplifying assumption, and is not essential. The essential requirement is that the corresponding moment generating function (given in \eqref{eq:nonlinearity f}) satisfies Assumption \ref{assum:standing}.
\end{rmk}

We have the following standing assumption.
\begin{assum}\label{assum:standing prob}
The domain $D\subseteq \Rm^d$ is non-empty, open, and connected. Regarding the branching, $r(x)$ and $(p_n(x),\; 0\leq n\leq n_{\max}$) ($n_{\max}<\infty$), are bounded and locally H{\"o}lder continuous, and satisfy \eqref{eq:pn equiv 1} and \eqref{eq:non-trivial branching}.
\end{assum}

\begin{defin}[Global survival]
We say $\bar X_t$ undergoes global survival if $N_t\geq 1$ for all $t<\infty$. We denote the event of global survival by $\glSurv$.
\end{defin}

\begin{defin}[Local survival]\label{defin:local suvival}
Given a non-empty, open, pre-compact subset $V\subset\subset D$, we say that the branching process $\bar X_t$ undergoes \textit{$V$-local survival} if 
\[
\sup\{t:X^i_t\in D\text{ for some $1\leq i\leq N_t$}\}=+\infty.
\]
We say that $\bar X_t$ undergoes \textit{local survival}, denoted by $\locSurv$, if it undergoes $V$-local survival for every non-empty, open, pre-compact $V\subset\subset D$.
\end{defin}

Then $\locSurv$ is an event, and is equal to $V$-local survival for any non-empty, open, pre-compact $V\subset\subset D$ up to null sets, by a simple Borel-Cantelli argument (see e.g.~\cite[p.20]{Maillard2025}). The events of \textit{global extinction} and \textit{local extinction} are the complements of global and local survival, respectively. We will answer the question of whether global survival without local survival is possible.

The event of global survival is simply that the branching Brownian motion doesn't die out, i.e. that there will always be a particle somewhere. The event of local survival, on the other hand, is the equivalent of recurrence for branching processes - it is the event that an arbitrarily chosen neighbourhood will see a particle visit infinitely often. Clearly $\locSurv\subseteq \glSurv$. The event $\glSurv\setminus \locSurv$ is the event that our branching Brownian motion doesn't die out, but it eventually vacates any given neighbourhood. 

If the branching mechanism is trivial (e.g.~by taking $r\equiv 0$) and $D=\Rm^3$, we obtain three-dimensional Brownian motion. Then whilst we have almost-sure global survival, Brownian motion is transient, so we also have almost-sure local extinction. If we add a branching rate supported on a compact set, or decaying sufficiently quickly to $0$, then it's not hard to obtain essentially the same possibility of global survival without local survival in $d\geq 3$. We will instead be interested in the case of uniformly positive branching, and will instead find a threshold between dimensions $6$ and $7$. 

The question of when branching processes survive locally and globally is fundamental \cite{Menshikov1997,Comets1998,Grigoryan2003,Pemantle2001,Lalley1997,Harris1963,Englander2004,Hering1978b,Maillard2025}. However, the possibility of global survival without local survival is a subtle one. Indeed, criteria for the survival of branching processes have largely focused on settings in which the process can be approximated by the process on a bounded subdomain. Whereas this can successfully illuminate the issue of local survival (e.g.~as in \cite{Englander2004}), it cannot illuminate the event of global survival without local survival, wherein particles ``drift off to infinity'', so that the behaviour ``at infinity'' is crucial.


To understand whether local survival and global survival are possible, it will be necessary to consider the generalised principal eigenvalue of the operator
\begin{equation}\label{eq:L prob}
\L=\Delta+c\quad\text{where}\quad c(x):=r(x)\sum_{n=0}^{n_{\max}}p_n(x)(n-1).
\end{equation}
The generalised principal eigenvalue is defined as in \eqref{eq:gen prin evalue},
\begin{equation}\label{eq:gen prin evalue prob section repetition}
\lambda(-\L):=\sup\{\lambda\in \Rm:\exists u\in W^{2,d}_{\text{loc}}(D)\text{ such that $-\L u\geq \lambda u$ and $u>0$ in $D$}\}.
\end{equation}

Engl{\"a}nder and Kyprianou established that local survival has positive probability if and only if $\lambda(-L)<0$ \cite{Englander2004}\footnote{Recall that opposing sign conventions for $\lambda$ are employed in PDE and probability, so that whereas \eqref{eq:gen prin evalue prob section repetition} is the definition common in PDEs, which we use here, the typical definition employed in probability differs from this by a factor of $-1$.}. Recently, Maillard and the second author \cite{Maillard2025} established the same relationship between global survival and another notion of generalised principal eigenvalue, $\lambda'(-L)$ (introduced by Berestycki and Rossi \cite{BR4}), except that this situation is ambiguous when $\lambda'(-L)=0$. In the present self-adjoint setting the two are equal \cite{BR4}. We obtain the following by putting together results from \cite{Maillard2025,Englander2004,BR4}.
\begin{theo}\label{theo:basic facts survival}
Suppose that Assumption~\ref{assum:standing} holds. Then we have:
\begin{enumerate}
\item Each of $\Pm_x(\locSurv)$, $\Pm_x(\glSurv)$ and $\Pm_x(\glSurv\setminus\locSurv)$ is either strictly positive for all $x\in D$, or $0$ for all $x\in D$. \label{enum:strict pve or 0 everywhere}
\item If $\lambda(-\L)<0$, then $\Pm_x(\locSurv)>0$ and $\Pm_x(\glSurv)>0$ for all $x\in D$.\label{enum:nve evalue surv}
\item If $\lambda(-\L)=0$, then $\Pm_x(\locSurv)=0$.\label{enum:0 evalue no loc surv}
\item If $\lambda(-\L)>0$, then $\Pm_x(\locSurv)=0$ and $\Pm_x(\glSurv)=0$ for all $x\in D$.\label{enum:pve evalue no surv}
\end{enumerate}
\end{theo}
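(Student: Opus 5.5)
The plan is to assemble the four items from the cited literature, using the PDE--probability dictionary through the extinction probability. Let $f$ be the nonlinearity associated with the branching mechanism,
\[
f(x,s):=r(x)\Big(1-s-\sum_{n=0}^{n_{\max}}p_n(x)(1-s)^n\Big)\qquad (s\in[0,1]),
\]
extended suitably for $s>1$ so that Assumption~\ref{assum:standing} holds (as allowed by Remark~\ref{rmk:generalising f Holder}). Then $\partial_sf(x,0)=r(x)\sum_np_n(x)(n-1)=c(x)$, so the operator in \eqref{eq:L prob} is exactly the linearisation of $f$ at $0$. Moreover $u(x):=\Pm_x(\glSurv)$ and $v(x):=\Pm_x(\locSurv)$ are bounded, non-negative solutions of $\Delta u+f(x,u)=0$ in $D$, vanishing at the boundary in the barrier sense. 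This follows from the branching Markov property at small times together with the McKean representation, as in \cite{Maillard2025}.

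For item~\ref{enum:strict pve or 0 everywhere}, the strong maximum principle applied to these non-negative solutions gives that each of $u$ and $v$ is either identically $0$ or everywhere positive. For $\Pm_x(\glSurv\setminus\locSurv)=u(x)-v(x)$ I would argue directly with the Markov property. Suppose $\Pm_{x_0}(\glSurv\setminus\locSurv)>0$ for some $x_0$, and take any $y\in D$. Because $D$ is connected, the initial particle started at $y$ has positive probability of reaching a small ball around $x_0$ before branching or being killed. The event $\glSurv\setminus\locSurv$ is measurable with respect to the future evolution, and it is preserved when the single particle at time $t$ is replaced by the whole process. Combining this with the positivity and continuity in $x$ of $\Pm_x(\glSurv\setminus\locSurv)$ near $x_0$, we get $\Pm_y(\glSurv\setminus\locSurv)>0$. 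For the continuity in $x$ I would use the Harnack inequality for $u$ and $v$ separately, or else couple the paths locally.

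Items~\ref{enum:nve evalue surv}, \ref{enum:0 evalue no loc surv} and \ref{enum:pve evalue no surv} on local survival are exactly the Engländer--Kyprianou criterion \cite{Englander2004}: $\Pm_x(\locSurv)>0$ if and only if $\lambda(-\L)<0$, with our sign convention. Their result is stated on general domains with Hölder coefficients, and the Hölder continuity of $r$ and $p_n$ in Assumption~\ref{assum:standing prob} supplies this. Since $\locSurv\subseteq\glSurv$, we then get $\Pm_x(\glSurv)>0$ whenever $\lambda(-\L)<0$, which completes item~\ref{enum:nve evalue surv}. For the global extinction part of item~\ref{enum:pve evalue no surv}, I would invoke \cite{Maillard2025}: global survival has probability $0$ when $\lambda'(-\L)>0$. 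In the self-adjoint case \cite{BR4} gives $\lambda'(-\L)=\lambda(-\L)$, so $\lambda(-\L)>0$ implies $\Pm_x(\glSurv)=0$.

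I expect the main obstacle to be checking hypotheses rather than any new argument. The cited results must apply with no boundary regularity and with $c$ possibly of either sign, and the equality $\lambda=\lambda'$ from \cite{BR4} must hold in this generality, since it is usually stated for bounded coefficients on general domains. If the equality needs some regularity, I would fall back on a direct argument. A positive supersolution $\phi$ with $-\L\phi\geq\lambda\phi$ and $\lambda>0$ makes $e^{\lambda t}\sum_i\phi(X^i_t)$ a non-negative supermartingale. This gives local extinction, but it does not give global extinction unless $\phi$ is bounded below. So the genuinely needed input is the bounded-below test function behind $\lambda'$, and I would quote \cite{BR4} for it.
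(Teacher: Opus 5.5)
Your assembly is the same as the paper's. Local survival comes from \cite{Englander2004}, global survival from \cite{Maillard2025} through $\lambda'$, and an eigenvalue identity then passes from $\lambda'$ to $\lambda$. Getting the global half of item~2 from $\locSurv\subseteq\glSurv$ is fine. One hypothesis you omit: \cite{Englander2004} treats binary branching only, so its proofs must be checked for general offspring laws, as the paper remarks.

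For item~1 you use the strong Markov property at the hitting time of a small ball around $x_0$. The paper instead observes that $\Pm_x(\glSurv)$ and $\Pm_x(\locSurv)$ are the maximal and minimal solutions. Their difference is then a non-negative solution of a linear equation with bounded zeroth-order coefficient, and the strong maximum principle gives the dichotomy immediately. Your argument also works, and the continuity it needs is automatic since both functions are classical solutions.

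The genuine gap is the global half of item~4. You take $\lambda'(-\L)=\lambda(-\L)$ from \cite{BR4}, but there it is proved under a $C^{1,1}$ boundary, whereas the theorem allows an arbitrary open connected $D$. You notice this, but your fallback does not close it (as you say), and you end by quoting \cite{BR4} anyway. So the implication $\lambda(-\L)>0\Rightarrow\Pm_x(\glSurv)=0$ is unjustified in the stated generality. The paper closes it with \cite[Theorem 4.14]{Maillard2025}, which gives $\lambda=\lambda'=\lambda''$ by a proof needing no boundary regularity.

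The test function you ask for is also not the one ``behind $\lambda'$''. The eigenvalue $\lambda'$ is defined through \emph{bounded} positive subsolutions vanishing in the $\xi$-sense. The mechanism there is that $\Pm_x(\glSurv)$ is itself such a subsolution with eigenvalue $0$, because $f(x,s)\le c(x)s$ gives $\Delta w+cw\ge\Delta w+f(x,w)=0$; hence global survival forces $\lambda'(-\L)\le0$. Supersolutions with $\inf\phi>0$ are instead the test functions of $\lambda''$. Once $\lambda''(-\L)=\lambda(-\L)$ is known, your supermartingale argument does finish the proof. For $0<\mu<\lambda(-\L)$, take such a $\phi$ with $-\L\phi\ge\mu\phi$. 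Then $M_t:=e^{\mu t}\sum_i\phi(X^i_t)$ is a non-negative supermartingale, and $(\inf\phi)N_t\le e^{-\mu t}M_t\to0$ almost surely.
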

Note that $\glSurv$ and $\locSurv$ both being positive probability events doesn't imply they have the same probability. This leads to two natural questions:
\begin{enumerate}
\item Is $\Pm_x(\glSurv)$ positive or $0$ when $\lambda(-L)=0$?
\item Is $\Pm_x(\glSurv\setminus\locSurv)$ positive or $0$? 
\end{enumerate}
It turns out (see \cite{Maillard2025}) that the first question is equivalent to the question of existence at criticality of stationary solutions of the corresponding reaction-diffusion equation, whilst the second question is equivalent (when $\lambda(-L)<0$) to the uniqueness of stationary solutions. This connection along with our results on stationary solutions of the FKPP equations will allow us to prove the following. 


\begin{theo}\label{theo:BBM no glob surv without loc surv}
We suppose that the branching rate is uniformly positive and $d\leq 6$. Then we have:
\begin{enumerate}
    \item If $\lambda(-\L)=0$, then $\Pm_x(\glSurv)=0$ for all $x\in D$.
    \item $\Pm_x(\glSurv\setminus \locSurv)=0$ for all $x\in D$.
\end{enumerate}
\end{theo}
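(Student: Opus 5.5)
We reduce both statements to the elliptic results, Theorems \ref{theo:nonexistence at criticality} and \ref{theo:uniqueness} with $\ell=1$ (so $2+4/\ell=6$), through the standard duality between BBM and the FKPP equation. Let $G(x,s):=\sum_{n}p_n(x)s^n$ and set
\[
f(x,u):=r(x)\bigl(1-u-G(x,1-u)\bigr),
\]
the nonlinearity associated with the branching mechanism, as in \eqref{eq:nonlinearity f}. Then $\partial_uf(x,0)=r(x)(G'(x,1)-1)=c(x)$. Moreover, $w(t,x):=\Pm_x(N_t\geq1)$ solves \eqref{eq:general reaction-diffusion} with initial datum $\eta_0\equiv1$. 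Its decreasing limit $w_\infty(x)=\Pm_x(\glSurv)$ is therefore a bounded stationary solution of \eqref{s-a}; it is in fact the maximal solution of Proposition \ref{prop:maximal solution}. The boundary condition holds in the barrier sense, since particles started near a regular boundary point are killed quickly.

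The duality results of \cite{Maillard2025} also give that $\Pm_x(\locSurv)$ solves \eqref{s-a}; it is the minimal positive solution when $\lambda(-\L)<0$. We take these identifications from \cite{Maillard2025} and Theorem \ref{theo:basic facts survival}, checking only that $f$ meets the regularity required by Remark \ref{rmk:generalising f Holder}. Condition \eqref{S} holds with $S=1$.

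The key step is to verify \eqref{concav1} with $\ell=1$ from uniform branching. Since $1-s-G(x,1-s)$ has second derivative $-\partial_{ss}G(x,1-s)$, we get
\[
\partial_{ss}f(x,s)=-r(x)\sum_{n\geq2}n(n-1)p_n(x)(1-s)^{n-2}.
\]
For $s\in[0,1]$ this is at most $-2r(x)p_2(x)$. This bound alone is not uniform if $p_2$ vanishes, so we use the full sum. It is at most $-2r(x)\sum_{n\ge2}p_n(x)(1-s)^{n_{\max}-2}$, which is negative on $[0,1)$ but degenerates as $s\to1$.

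This degeneracy is the main obstacle. I would handle it through the integrated form of \eqref{concav1}:
\[
\partial_s\bigl(f(x,s)/s\bigr)=s^{-2}\int_0^s\tau\,\partial_{ss}f(x,\tau)\,d\tau .
\]
For $s\le1$ the integrand is at most $-2\delta(1-\tau)^{n_{\max}-2}\tau$, where $\delta:=\inf_x r\sum_{n\ge2}p_n>0$. The integral is then bounded above by $-c_0\,\delta\, s^2$ for some $c_0>0$ depending only on $n_{\max}$. Hence $\partial_s(f/s)\le -c_0\delta$ on $(0,1]$. Only values in $[0,1]$ matter: all relevant solutions are probabilities, and solutions are $\le S=1$. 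If needed, I would modify $f$ for $s>1$ to remain uniformly concave, which does not affect solutions valued in $[0,1]$. This gives \eqref{concav1} with $\ell=1$ and $\sigma=c_0\delta$, and hence \eqref{eq:existence condition}.

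For part 1, suppose $\lambda(-\L)=0$ and $d\le6$. Theorem \ref{theo:nonexistence at criticality} gives no positive bounded solution of \eqref{s-a}. Since $w_\infty\ge0$ solves \eqref{s-a} and is bounded, the strong maximum principle forces $w_\infty\equiv0$, that is, $\Pm_x(\glSurv)=0$.

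For part 2, if $\lambda(-\L)\ge0$ then $\Pm(\glSurv)=0$, by part 1 and Theorem \ref{theo:basic facts survival}(\ref{enum:pve evalue no surv}). If $\lambda(-\L)<0$, both $\Pm_x(\locSurv)$ and $\Pm_x(\glSurv)$ are positive bounded solutions by Theorem \ref{theo:basic facts survival}(\ref{enum:nve evalue surv}). Theorem \ref{theo:uniqueness} makes them equal, and since $\locSurv\subseteq\glSurv$ we get $\Pm_x(\glSurv\setminus\locSurv)=0$.
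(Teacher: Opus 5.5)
Your proposal is correct and is essentially the paper's proof. Both parts are reduced, through the BBM--FKPP correspondence of Theorem~\ref{theo:PDE survival connection}, to Theorems~\ref{theo:nonexistence at criticality} and~\ref{theo:uniqueness} with $\ell=1$. You also supply two steps the paper only asserts: the explicit verification of \eqref{concav1} on $(0,1]$ via $\partial_s(f/s)=s^{-2}\int_0^s\tau\,\partial_{ss}f\,d\tau$, and the modification of $f$ above $s=1$, which is genuinely needed (for purely ternary branching the original $f$ is positive for $s>2$, so \eqref{S} fails).
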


The latter tells us that, when $d\leq 6$, if branching Brownian motion doesn't die out, then it must visit any given neighbourhood at arbitrarily large times. The following theorem tells us that dimension $6$ is sharp. We consider a branching Brownian motion on $\Rm^d$ specified by a binary branching rate $\br(x)$ and killing rate $\kappa(x)$, so that
\[
r(x):=\br(x)+\kappa(x),\quad p_0(x):=\frac{\kappa(x)}{r(x)},\quad p_2(x):=\frac{\br(x)}{r(x)},\quad p_n(x)\equiv 0\text{ for $n\neq 0,2$.}
\]
Then $c=\br-\kappa$, so that $\L=\Delta+\br-\kappa$. The branching mechanism being uniformly positive is the statement that $\inf_{x\in \Rm^d}\br(x)>0$, which will be trivial in the following.

We let $V\in C^{\infty}(\Rm^d)\cap L^\infty(\R^d)$ be an asymptotic Hardy potential, that is,
\Fi{Hardy_potential prob}
\exists\e>0,\qquad
V(x) = \frac{c}{|x|^2} + o\left(|x|^{-2-\e}\right) \quad \text{ as }\; |x| \to \infty.
\Ff
We do not assume $V(x)$ to be radially symmetric, but only to have an asymptotic radial symmetry. We will assume that $c$ satisfies
\Fi{cbounds prob}
	2d-8=H_d-\Big(\sqrt{H_d}-2\Big)^2<c<H_d,
	\Ff
where $H_d$ is the critical Hardy constant
\[H_d:=\left(\frac{d-2}{2}\right)^2.\]

\begin{theo}\label{theo:counterexample prob}
Suppose that $d\geq 7$. We suppose that $\br\equiv 1$, and $\kappa=\max(1-V,0)$, where $V$ is given by \eqref{Hardy_potential prob} for $c$ satisfying \eqref{cbounds prob}. 

Then $\Pm_x(\glSurv\setminus \locSurv)>0$ for all $x\in \Rm^d$, and 
\begin{equation}
\Pm_x(\glSurv)\sim \Theta(\lvert x\rvert^{-1}).
\end{equation}
Moreover both $(1)$ $\Pm_x(\locSurv)\equiv 0$ and $(2)$ $\Pm_x(\locSurv)>0$ for all $x\in \Rm^d$ are possible. We always have possibility (1) when $V\leq H_d\lvert x\rvert^{-2}$ everywhere. On the other hand, when we have $(2)$, then
\begin{equation}
\Pm_x(\locSurv)\sim \Theta(\lvert x\rvert^{-\sqrt{H_d}-\sqrt{H_d-c}}).
\end{equation}

However, if instead of \eqref{cbounds prob} we assume that $0<c< 2d-8$, then
\begin{equation}
    \Pm_x(\glSurv\setminus \locSurv)\equiv 0.
\end{equation}

On the other hand, if $c>H_d$, then
\[
\Pm_x(\locSurv)>0\quad\text{and}\quad \Pm_x(\glSurv\setminus \locSurv)\equiv 0.
\]

\end{theo}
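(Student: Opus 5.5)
The plan is to go through the correspondence between survival probabilities and steady states recalled after Theorem~\ref{theo:basic facts survival} (from \cite{Maillard2025}). With $\br\equiv1$ and $\kappa=\max(1-V,0)$, the nonlinearity attached to the branching mechanism is $f(x,s)=\br s(1-s)-\kappa s=c(x)s-s^2$, where $c=\min(V,1)$; for $|x|$ large this is $V$. Under this correspondence the following identifications hold, up to the standard verification that the branching mechanism satisfies Assumption~\ref{assum:standing}.
\begin{itemize}
\item $x\mapsto\Pm_x(\glSurv)$ is the maximal bounded non-negative solution of $\Delta u+c(x)u-u^2=0$ in $\Rm^d$ (Proposition~\ref{prop:maximal solution}).
\item $x\mapsto\Pm_x(\locSurv)$ is the minimal positive solution of Section~\ref{sec:extremal} when $\lambda(-\L)<0$, and is $0$ otherwise.
\end{itemize}
Hence $\Pm_x(\glSurv\setminus\locSurv)>0$ exactly when the maximal solution is positive and differs from the minimal one, or when $\lambda(-\L)\ge0$ and a positive solution exists. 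Every statement of the theorem thus becomes a statement about the extremal solutions of $\Delta u+V u-u^2=0$ with an asymptotic Hardy potential, i.e.\ about Theorem~\ref{thm:Hardy_spercritical} in the case $\ell=1$.

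\emph{Step 1 (self-similar barriers).} For $A>0$ compute $\Delta(A|x|^{-2})=-2(d-4)A|x|^{-4}$. Then $A|x|^{-2}$ solves $\Delta u+\tfrac{c}{|x|^2}u-u^2=0$ away from the origin iff $A=c-(2d-8)$, which is positive precisely under the lower bound in \eqref{cbounds prob}. The radial solutions of the linearised equation are $|x|^{-\alpha_\pm}$ with $\alpha_\pm=\sqrt{H_d}\pm\sqrt{H_d-c}$, and the upper bound $c<H_d$ makes these exponents real.

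Perturbing $A|x|^{-2}$ by $(1\pm\delta)$ together with multiples of $|x|^{-\alpha_\pm}$, and using that $V-c|x|^{-2}=o(|x|^{-2-\e})$, gives a subsolution and a supersolution on an exterior domain $\{|x|>R\}$. Both are comparable to $|x|^{-2}$ and can be glued to bounded barriers on $B_R$. This yields a positive maximal solution $\Pm_x(\glSurv)$ whose decay rate is read off from these barriers. I would check carefully that this matches the rate claimed in the statement, since the barriers naturally produce $|x|^{-2}$ decay.

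\emph{Step 2 (local survival and the difference).} The sign of $\lambda(-\L)$ is governed by the compact part of $c$, since $c<H_d$ keeps the operator subcritical at infinity.
\begin{itemize}
\item If $V\le H_d|x|^{-2}$ everywhere, Hardy's inequality gives $\lambda(-\L)\ge0$, so $\Pm_x(\locSurv)\equiv0$ while $\Pm_x(\glSurv)>0$. This is possibility $(1)$, and it already gives $\glSurv\setminus\locSurv$ positive probability.
\item Adding a large compactly supported bump to $V$ makes $\lambda(-\L)<0$, which is possibility $(2)$. The minimal solution is then controlled near infinity by the linear equation. Because it lies below any multiple of $|x|^{-2}$, comparison with $|x|^{-\alpha_\pm}$ forces it to follow the fast branch, giving the rate $\Theta(|x|^{-\alpha_+})$.
\item The condition $c>2d-8$ is equivalent to $\alpha_+>2$. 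Hence the minimal solution decays strictly faster than the maximal one, so the two differ and $\Pm_x(\glSurv\setminus\locSurv)>0$ in this case too.
\end{itemize}

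\emph{Step 3 (the two complementary regimes).} For $0<c<2d-8$ I would redo the energy-type argument behind Theorem~\ref{theo:uniqueness}, localised near infinity. Now $\alpha_+<2$, so every positive solution decays like a solution of the linear equation rather than through the nonlinearity, and the quadratic term becomes integrable against the test functions. This should give $\lambda(-\L)\ge0\Rightarrow$ no positive solution, and uniqueness when $\lambda(-\L)<0$; in both cases $\glSurv=\locSurv$ a.s. For $c>H_d$, the potential is Hardy-supercritical at infinity, so $\lambda(-\L)<0$ and local survival has positive probability. Every positive solution then dominates a positive oscillating-free barrier at each scale, and I would use the Hardy-supercritical structure to show that the maximal and minimal solutions coincide.

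The main obstacle is Step 3, especially the uniqueness for $c<2d-8$ and for $c>H_d$. The cleanest route is probably a weighted version of the integral identity used for Theorem~\ref{theo:uniqueness}, with weight $|x|^{-\alpha_+}$ replacing the dimension-counting cutoff. The same estimates are also needed to pin down the precise decay exponents in Step 2.
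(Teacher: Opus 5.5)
For the regime $2d-8<c<H_d$ you follow the paper's route. You translate survival into extremal steady states of $\Delta u+\min(V,1)u-u^2=0$ via Theorem~\ref{theo:PDE survival connection}, then read everything off Theorem~\ref{thm:Hardy_spercritical} with $\ell=1$. You are right to distrust the stated rate: that theorem gives $\Pm_x(\glSurv)=u_M=\Theta(|x|^{-2/\ell})=\Theta(|x|^{-2})$, so the exponent $-1$ in the statement is a misprint.

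Several of your justifications need repair, however.
\begin{itemize}
\item The condition $c>2d-8$ is equivalent to $\alpha_->2$ (i.e.\ $a^*<-2$), not to $\alpha_+>2$. In fact $\alpha_+>\sqrt{H_d}\ge 5/2$ for every $d\ge7$ and every $c<H_d$. That is why your conclusion $u_m\neq u_M$ nevertheless holds.
\item The bound $u_m=O(|x|^{-\alpha_+})$ cannot be derived from $u_m$ lying below multiples of $|x|^{-2}$: small multiples of $|x|^{-2}$ are \emph{sub}solutions when $c>2d-8$. It comes from $u_m$ lying below every positive generalised supersolution. For $C$ large, $C(|x|^{-\alpha_+}-|x|^{a})$ with $a\in(-\alpha_+-\e,-\alpha_+)$, glued to a constant, is such a supersolution.
\item The effective coefficient is $\min(V,1)\le 1$. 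So the perturbation producing $\lambda(-\L)<0$ must make $V\ge1$ on a sufficiently large ball; a tall narrow bump is truncated away. This is exactly the check the paper performs.
\end{itemize}

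The genuine gap is Step 3: for the regimes $c<2d-8$ and $c>H_d$ you give no working mechanism.

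Your heuristic for $c<2d-8$ rests on ``$\alpha_+<2$'', which is false. What actually changes at $c=2d-8$ is that $\alpha_-$ crosses $2$, so the self-similar profile $(c-(2d-8))|x|^{-2}$ ceases to be positive. A weighted version of the identity in Theorem~\ref{thm:Liouv6} would close in any dimension if you knew $u_M=O(|x|^{-\beta})$ with $\beta>(d-2)/2$, since then the integral in \eqref{HR2} is $O(R^{d-2-2\beta})\to0$ and $H_R\to0$. Since $\alpha_+>(d-2)/2$, it would suffice to show that $u_M$ follows the fast branch. But that a priori decay is the whole difficulty, and nothing in your proposal provides it.

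The paper instead proves Theorem~\ref{theo:bound on solns l=1}.
\begin{itemize}
\item \emph{Case $c<2d-8$.} For $V=\min(c'|x|^{-2},A)$ with $c'>2d-8$, the rescalings $\alpha^2u_M(\alpha\,\cdot)$ are the maximal solutions for the potentials $\alpha^2V(\alpha\,\cdot)$. They are therefore nondecreasing in $\alpha$ and bounded, and their limit is a scale-invariant solution. This forces $u_M\le(c'-(2d-8))|x|^{-2}$. Letting $c'\downarrow 2d-8$ and comparing gives non-existence whenever $V\le(2d-8)|x|^{-2}$ everywhere. A probabilistic localisation then handles $V$ that is only eventually below this bound. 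On $\glSurv\setminus\locSurv$ the BBM eventually avoids some ball $B(0,R)$, outside of which it coincides with a BBM having no positive steady state.
\item \emph{Case $c>H_d$.} The barrier of Theorem~\ref{thm:Hardy_spercritical}(ii) gives $u_M=O(|x|^{-2})$, and the same scaling applied to $u_m$ gives $u_m=\Theta(|x|^{-2})$. Hence $u_M\le Cu_m$. On $\glSurv\setminus\locSurv$ every particle escapes to infinity. L\'evy's upward theorem, applied to $1-\prod_i(1-u_M(X^i_t))$ and $1-\prod_i(1-u_m(X^i_t))$, then forces $\sum_i u_M(X^i_t)\to\infty$ while $\sum_i u_m(X^i_t)\to0$. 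This contradicts $u_M\le Cu_m$.
\end{itemize}
Your ``oscillation-free barrier'' sketch does not supply a substitute for either the scaling bound or this comparability-plus-martingale argument.
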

\begin{rmk}[Double non-trivial phase transition]\label{rmk:double threshold}
Taking $V=\min(c\lvert x\rvert^{-2},1)$, we observe a \textit{double non-trivial phase transition} in $c$: neither local survival nor global survival are possible for $c<2d-8$, local survival is impossible but global survival is possible for $c\in (2d-8,H_d)$, and global survival and local survival are both possible and the same event for $c>H_d$.
\end{rmk}
Whilst these theorems are entirely probabilistic, their proof is via PDE arguments. Since there is a dimension threshold, any proof has to incorporate the dimension, and yet in the authors' point of view, there are fewer tools incorporating dimension in probability versus PDE theory. Therefore one might expect that any probabilistic proof should lead to interesting new tools, motivating the following.
\begin{prob}
Prove Theorems \ref{theo:BBM no glob surv without loc surv} and \ref{theo:counterexample prob} using probabilistic arguments.
\end{prob}
The description of the location of the tip, the extremal point process, and their genealogy have been a central object of study in the investigation of branching Brownian motion \cite{Berestycki2024,Arguin2011,Aidekon2013,Arguin2013,Bramson1978,Lalley1987}. If we consider the branching Brownian motion in Theorem \ref{theo:counterexample prob} with $V(x)=c\lvert x\rvert^{-2}$, for $c$ satisfying \eqref{cbounds prob}\commentout{\nota{should we allow $c=H_d$?}}, we obtain a BBM which survives globally with positive probability, but does not survive locally. Consequentially, on the event of global survival, we must have $\inf_i\lvert X^i_t\rvert\ra \infty$ as $t\ra\infty$. Thus we obtain not only an `outer' tip corresponding to the furthest particle from $0$, but also the new possibility of an `inner tip' corresponding to the particle closest to $0$.
\begin{prob}
Describe the location, the extremal point process, and the genealogy for the inner tip.
\end{prob}

The probability of local and global survival correspond to the minimal and maximal steady states of the FKPP equation in Theorem \ref{thm:Hardy_spercritical}. However Theorem \ref{thm:Hardy_spercritical} provides another family of stationary solutions, $(u_\kappa)_{\kappa>0}$, with asymptotics $u_\kappa\sim \kappa \lvert x\rvert^{-\sqrt{H_d}+\sqrt{H_d-c}}$. 
\begin{question}
Is there a natural probabilistic interpretation for $(u_\kappa)_{\kappa>0}$?
\end{question}
In fact, global survival without local survival is also possible on an inhomogeneous domain $D$, with no killing on the interior.
\begin{theo}\label{theo:gl surv loc surv inhom D}
Suppose that $d\geq 7$. Then there exists locally analytic $D\subseteq \Rm^d$ such that $\Pm_x(\glSurv\setminus\locSurv)>0$ for a simple binary branching Brownian motion killed instantaneously at $\partial D$.
\end{theo}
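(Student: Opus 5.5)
We plan to deduce Theorem~\ref{theo:gl surv loc surv inhom D} from the PDE counterexample of Theorem~\ref{theo:existence nonuniq d geq 7}, using the dictionary between BBM survival and FKPP steady states recalled before Theorem~\ref{theo:BBM no glob surv without loc surv} (from \cite{Maillard2025}). For simple binary branching at rate $1$ with no interior killing, take $r\equiv 1$, $p_2\equiv 1$. Then $c\equiv 1$, and the nonlinearity associated with the moment generating function is $f(s)=s-s^2$, up to the normalisation $s\mapsto 1-s$ of generating functions. Assumption~\ref{assum:standing prob} then holds trivially, and the branching is uniformly positive.

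\textbf{Step 1: identify the two probabilities with the extremal steady states.} Set $u_{\mathrm{gl}}(x):=\Pm_x(\glSurv)$ and $u_{\mathrm{loc}}(x):=\Pm_x(\locSurv)$. By \cite{Maillard2025}, each is a solution of $\Delta u+u-u^2=0$ in $D$ with the relaxed Dirichlet condition $\overset{\xi}{=}0$, whenever it is positive. Moreover, $u_{\mathrm{gl}}$ is the maximal positive bounded steady state of Proposition~\ref{prop:maximal solution}, and $u_{\mathrm{loc}}$ is the minimal one of Section~\ref{sec:extremal}. To check maximality of $u_{\mathrm{gl}}$, we use the fact that $1-u(X^i_t)$ gives a product martingale for any solution $u\le 1$. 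To check minimality of $u_{\mathrm{loc}}$, we use that a compactly supported perturbation converges to the minimal solution (Theorem~\ref{thm:cptperturb}(ii)), which matches the local survival probability. Since $\locSurv\subseteq\glSurv$,
\[
\Pm_x(\glSurv\setminus\locSurv)=u_{\mathrm{gl}}(x)-u_{\mathrm{loc}}(x).
\]

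\textbf{Step 2: apply the counterexample.} Take $D$ to be the locally analytic domain from Theorem~\ref{theo:existence nonuniq d geq 7}, on which $\Delta u+u-u^2=0$ has two distinct positive bounded solutions. The maximal solution then strictly exceeds the minimal one somewhere, so $u_{\mathrm{gl}}\neq u_{\mathrm{loc}}$. Hence $\Pm_x(\glSurv\setminus\locSurv)>0$ for some $x$, and by Theorem~\ref{theo:basic facts survival}(\ref{enum:strict pve or 0 everywhere}) for all $x\in D$.

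There is a subtlety when $\lambda(-\L)=0$. In that case $\Pm_x(\locSurv)=0$ by Theorem~\ref{theo:basic facts survival}, so the minimal object is $0$ and not a positive steady state. Any positive solution is then dominated by $u_{\mathrm{gl}}$, which is therefore positive, and we again get $\Pm_x(\glSurv\setminus\locSurv)>0$.

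\textbf{Main obstacle.} The delicate step is Step 1 on a domain with no uniform boundary regularity. We must check that the survival probabilities satisfy the barrier-sense boundary condition, and that the extremal steady states constructed analytically coincide with them. We would handle this by exhausting $D$ by smooth bounded subdomains $D_n$. On each $D_n$ the identification holds classically, with unique steady states by the bounded-domain results of Section~\ref{section:bounded}. Monotone convergence then gives the minimal solution as the limit of the $D_n$-solutions, which is the probability of survival in growing compacts and hence local survival. The maximal solution is obtained as the decreasing limit of solutions started from $1$, matching $\Pm_x(N_t\ge1)\downarrow\Pm_x(\glSurv)$. Since Theorem~\ref{theo:existence nonuniq d geq 7} was itself built probabilistically via a random domain, we could alternatively read the two survival probabilities directly off that construction.
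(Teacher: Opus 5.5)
Your proposal is correct and follows the paper's proof, which simply combines Theorem~\ref{theo:existence nonuniq d geq 7} with Theorem~\ref{theo:PDE survival connection}: the global and local survival probabilities are the maximal and minimal steady states, so two distinct solutions force $\Pm_x(\glSurv\setminus\locSurv)>0$; your separate treatment of the case $\lambda(-\L)=0$ is a harmless refinement. The one loose point is your side remark deriving minimality of $\Pm_x(\locSurv)$ from Theorem~\ref{thm:cptperturb}(ii), which only yields the easy inequality that $\Pm_x(\locSurv)$ dominates the minimal solution, but nothing rests on it since you cite \cite{Maillard2025} for the identification, exactly as the paper does.
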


\subsection*{Overview of the paper}

In Section \ref{section:prelim} we will establish a number of preliminary results, which will be essential for proving our main theorem. We highlight here Section \ref{sec:uniq from non-exis}, where we show how to obtain uniqueness of stationary solutions from non-existence at criticality (for a different reaction term). Section \ref{section:Liouville} is dedicated to the proofs of our main theorems on uniqueness of steady states and non-existence at criticality. This is followed by Section \ref{section:infinite}, where we provide our counterexamples above the dimension threshold. Finally in Section \ref{section:probabilistic results}, we will prove our results on branching Brownian motion using the PDE results proven in the previous sections.

\section{Preliminaries}\label{section:prelim}

\commentout{In this section we will establish a number of preliminary results, for proving our main theorems. We will accomplish the following:\OT{Rewrite}
\begin{enumerate}
\item Whereas we state Theorems \ref{theo:nonexistence at criticality} and \ref{theo:uniqueness} for locally Lipschitz boundaries, we will in fact prove generalisation of them without any boundary assumptions. This enables us to state Theorem \ref{theo:BBM no glob surv without loc surv} without any boundary assumptions. We will provide our formulation of the notion of Dirichlet boundary conditions using the barrier functions introduced in \cite{BNV1994}, with a minor adaptation to account for the domain being unbounded.
\item We will then provide the machinery allowing one to obtain uniqueness of steady states from non-existence at criticality, for an appropriately adapted reaction term. This is crucial, as our proof of Theorem \ref{theo:uniqueness} will proceed by proving non-existence at criticality (Theorem \ref{theo:nonexistence at criticality}) and then applying this machinery.
\item In our proof of Theorem \ref{theo:nonexistence at criticality}, it will be necessary to assume the domain is unbounded. We provide here a proof of the case of bounded domains for general boundary conditions. 
\end{enumerate}}

We assume that $ D$ and $a$ satisfy the conditions in Assumption \ref{assum:standing}, which we recall are that $ D$ is a non-empty, open, connected subdomain of $\Rm^d$, and $(a_{ij})$ is uniformly elliptic, bounded and $C^1(\bar D)$. We do not impose any regularity on $\partial D$. We recall that $Lu:=\partial_i(a_{ij}\partial_ju)$. We now explain the meaning of $u\overset{\xi}{=}0$.


\subsection{Dirichlet boundary conditions on general domains}\label{section:Barrier}


In \cite[Section 3]{BNV1994}, Berestycki, Nirenberg and Varadhan constructed on bounded domains a solution, $\varrho$, of
\[
L \varrho=-1,\quad x\in D,
\]
which is positive in $D$ and 
vanishes on $\partial D$ in some sense. They used this as a barrier function, saying that $u\overset{\varrho}{=}0$ on $\partial D$ if $\varrho(x_n)\ra 0$ implies $u(x)\ra 0$ as $n\ra\infty$. 

In fact, $\varrho$ has the following interpretation. Let $(X_t:0\leq t<\tau_D)$ be the diffusion with generator $L$, killed at the time $\tau_D:=\inf\{t>0:X_t\notin D\}$, with $\Pm_x(B_0=x)=1$. Then 
\begin{equation}\label{eq:u0 is exit time}
    \varrho(x)=\expE_x[\tau_D].
\end{equation}
To see this, we recall that $\varrho$ is constructed by taking an ascending sequence $D_j$ of smooth sub-domains of $D$ with union $D$, and defining $u_j$ to be the unique solution of $L u=-1$ on $D_j$ with Dirichlet boundary conditions defined in the usual way. They define $\varrho$ to be the pointwise non-decreasing limit of the $u_j$s. Note that $u_j(x)$ has the interpretation of being the expectation of the exit time started from $x$ of $D_j$, so \eqref{eq:u0 is exit time} follows from the monotone convergence theorem.

If we try to implement the above on an unbounded domain, however, it is no longer clear that $\tau_D$ is $\Pm_x$-integrable. To circumvent this problem, we employ the same procedure to see that 
\begin{equation}
\xi=\expE_x[1-e^{-\tau}]
\end{equation}
is the minimal non-negative solution to 
\begin{equation}
(L-1)\xi=-1,\quad x\in D.
\end{equation}
\begin{defin}
We say that $u\overset{\xi}{=} 0$ on $\partial D$ if, for any bounded sequence $(x_n)$ in $D$, $\xi(x_n)\ra 0$ implies $u(x_n)\ra 0$.
\end{defin}
Whereas $\overset{\xi}{=}$ avoids the above issues issues with $\overset{\varrho}{=}$ on unbounded domains and they are equivalent on bounded domains, we will still need $\varrho$ later in the proof of Proposition \ref{prop:xi vanishes a.s.}.

\commentout{
\begin{Luca}
\BS As far as I understand, the differences between the conditions $\overset{\xi}{=}$
and $\overset{=}{\rho}$ are that $\xi$ and $\rho$ are slightly different,
and the sequence for $\xi$ is bounded.
Do we need both?
This should be clarified.
\end{Luca}
}

\begin{rmk}[$\xi$ as a barrier function]\label{rmk:xi barrier function}
It is easy to see that $u\overset{\xi}{=} 0$ on $\partial D$ and $u\overset{\varrho}{=} 0$ on $\partial D$ are equivalent for bounded domains. On the other hand, if the domain is unbounded, let $\varrho_R$ be the barrier function on $D\cap B(0,R)$ constructed as in \cite{BNV1994}, for $R<\infty$. It is immediate that $u\overset{\xi}{=} 0$ is equivalent to $u(x_n)\ra 0$ for any sequence $(x_n)$ in $ D$ such that, for some $R<\infty$, $\sup_n\lvert x_n\rvert<R$ and $\varrho_R(x_n)\ra 0$. For any bounded or unbounded $D$, $\xi(x_n)\ra 0$ as $n\ra\infty$ for a sequence $(x_n)$ is equivalent to $\tau_{D,n}\ra 0$ in probability as $n\ra\infty$, where $\tau_{D,n}$ is the exit time started from $x_n$.
\end{rmk}

The following proposition is needed for connecting PDEs with Dirichlet boundary conditions with branching processes on general domains with instantaneous killing at the boundary, and is needed to follow the proof of \cite[Proposition A.5]{Maillard2025} (relying on \cite[Theorem A.2]{Maillard2025}) in order to prove Proposition \ref{prop:parabolic equation well-posedness} below.
\begin{prop}\label{prop:xi vanishes a.s.}
We assume Assumption \ref{assum:standing}. Then $\lim_{t\uparrow \tau_D}\xi(X_t)=0$ almost surely, on $\{\tau_D<t\}$. 
\end{prop}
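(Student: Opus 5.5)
We will prove that, almost surely on $\{\tau_D<\infty\}$, $\xi(X_t)\to0$ as $t\uparrow\tau_D$. The plan is a martingale argument using $M_t:=e^{-t}\xi(X_t)+\int_0^t e^{-s}\,ds$ for $t<\tau_D$. Since $(L-1)\xi=-1$ and $\xi\in W^{2,d}_{\loc}(D)$ (interior regularity), the It\^o--Krylov formula applied up to the exit times $\tau_{D_j}$ of the exhausting smooth subdomains $D_j$ shows that $M_{t\wedge\tau_{D_j}}$ is a bounded martingale. By the strong Markov property and the probabilistic representation $\xi(x)=\expE_x[1-e^{-\tau_D}]$, we have directly
\[
M_{t}=\expE_x\big[1-e^{-\tau_D}\,\big|\,\mathcal F_{t}\big]\quad\text{on }\{t<\tau_D\}.
\]
Indeed, on $\{t<\tau_D\}$,
\[
\expE[1-e^{-\tau_D}\mid\mathcal F_t]=1-e^{-t}+e^{-t}\expE_{X_t}[1-e^{-\tau_D}]=1-e^{-t}+e^{-t}\xi(X_t)=M_t.
\]
So $M$ is a closed, bounded martingale, which is the cleanest route and avoids It\^o calculus.

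By L\'evy's upward theorem (martingale convergence along $t\uparrow\tau_D$, using right-continuity of the filtration of the diffusion),
\[
M_t\to \expE_x[1-e^{-\tau_D}\mid\mathcal F_{\tau_D-}]\quad\text{a.s. as }t\uparrow\tau_D.
\]
Because $\tau_D$ is $\mathcal F_{\tau_D-}$-measurable (it is the limit of the predictable announcing times $\tau_{D_j}$, or of the hitting times of $\{\xi<1/n\}\cup D_j^c$), this limit equals $1-e^{-\tau_D}$. On $\{\tau_D<\infty\}$, combining this with $M_t=1-e^{-t}+e^{-t}\xi(X_t)$ and $1-e^{-t}\to1-e^{-\tau_D}$ gives
\[
e^{-t}\xi(X_t)\to0,\qquad\text{hence}\qquad \xi(X_t)\to0,
\]
since $e^{-t}\to e^{-\tau_D}>0$. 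The statement's ``$\{\tau_D<t\}$'' should be read as $\{\tau_D<\infty\}$.

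The main obstacle is justifying that $\tau_D$ is $\mathcal F_{\tau_D-}$-measurable, i.e.\ predictability of the exit time, for an arbitrary open $D$ with no boundary regularity. The first approach is to use the announcing sequence $\tau_{D_j}\uparrow\tau_D$ with $\tau_{D_j}<\tau_D$: the $D_j$ are pre-compact in $D$ and $X$ is continuous, so $X_{\tau_{D_j}}\in\partial D_j\subset D$ and hence $\tau_{D_j}<\tau_D$. Moreover $\tau_{D_j}\uparrow\tau_D$ because $\bigcup_j D_j=D$ and paths are continuous on compact time intervals. An announceable stopping time is predictable, which gives the required measurability. A secondary technical point is the strong Markov representation of $\xi$. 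This follows from the monotone construction described before the statement, with $\xi_j(x)=\expE_x[1-e^{-\tau_{D_j}}]$ and monotone convergence. It uses the Feller/strong Markov property of the diffusion generated by $L$, which holds since $a$ is $C^1$ and uniformly elliptic.
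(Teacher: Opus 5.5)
Your argument is correct, but it takes a different route from the paper. The paper first treats bounded $D$ using the expected exit time $\varrho=\expE_x[\tau_D]$:
\begin{itemize}
\item it observes that $\varrho(X_t)\Ind(\tau_D>t)+(t\wedge\tau_D)$ is a martingale, and takes a c\`adl\`ag modification $\rho_t$ of the supermartingale $\varrho(X_t)$;
\item it uses It\^o's formula with localisation and $L\varrho=-1$ to see that $\rho_{(t\wedge\tau_D)-}+(t\wedge\tau_D)$ is also a martingale;
\item subtracting the two, the nonnegative quantity $\rho_{\tau_D-}$ has zero mean and hence vanishes.
\end{itemize}
Unbounded $D$ is then handled by comparing $\xi$ with the barriers $\varrho_R$ of $D\cap B_R$ (Remark~\ref{rmk:xi barrier function}).

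You instead use that $1-e^{-\tau_D}$ is bounded even when $\tau_D$ is not integrable. So the closed martingale $\expE_x[1-e^{-\tau_D}\mid\mathcal F_t]$ lives directly on the possibly unbounded $D$. The Markov property identifies it with $1-e^{-t}+e^{-t}\xi(X_t)$ before $\tau_D$, and predictability of $\tau_D$ gives the correct left limit at $\tau_D$. This removes the It\^o calculus, the auxiliary function $\varrho$, and the bounded-to-unbounded reduction. The paper's It\^o-plus-localisation step is where it implicitly obtains the same ``no jump at $\tau_D$'' property that you get from the announcing sequence.

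One point of emphasis should be corrected. The announcing sequence is not what makes $\tau_D$ measurable with respect to $\mathcal F_{\tau_D-}$: every stopping time $T$ is $\mathcal F_{T-}$-measurable, since $\{s<T\}\in\mathcal F_{T-}$. What the announcing sequence genuinely justifies is $\lim_{t\uparrow\tau_D}M_t=\expE_x[1-e^{-\tau_D}\mid\mathcal F_{\tau_D-}]$, which can fail at non-predictable times. Your appeal to right-continuity of the filtration does not cover this. A clean write-up of this step:
\begin{itemize}
\item optional sampling gives $M_{\tau_{D_j}}=\expE_x[1-e^{-\tau_D}\mid\mathcal F_{\tau_{D_j}}]$;
\item the discrete upward theorem gives convergence to $1-e^{-\tau_D}$, which is $\bigvee_j\mathcal F_{\tau_{D_j}}$-measurable because $\tau_D=\lim_j\tau_{D_j}$;
\item the c\`adl\`ag version of $M$ has left limits and agrees on $[0,\tau_D)$ with the continuous process $1-e^{-t}+e^{-t}\xi(X_t)$ (by agreement at rational times and continuity of $\xi$ in $D$), so the convergence along $\tau_{D_j}$ upgrades to the full limit as $t\uparrow\tau_D$.
\end{itemize}
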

\begin{proof}[Proof of Proposition \ref{prop:xi vanishes a.s.}]
We firstly consider the case that $D$ is bounded. We then observe, by direct checking using conditional expectation rules, that
\begin{equation}\label{eq:mart M}
M_t:=\varrho(X_t)\Ind(\tau_D>t)-\varrho(X_0)+(t\wedge \tau_D)
\end{equation}
is a martingale. Then $\varrho(X_t)$ is a bounded supermartingale, hence Doob's regularity theorem tells us that is has a c\`adl\`ag modification, which we call $\rho_t$. Since $\varrho(X_t)$ is continuous on $t\neq \tau_D$, $\rho_t=\varrho(X_t)$ for $t\neq \tau_D$. Therefore, by Ito's lemma and a localisation argument,
\begin{equation}\label{eq:mart tilde M}
\tilde{M}_t=\rho_{(t\wedge \tau_D)-}-\int_0^{t\wedge \tau_D}L\varrho(X_s)ds-\rho_0
\end{equation}
is a martingale. Taking the difference of \eqref{eq:mart M} and \eqref{eq:mart tilde M}, we see that
\[
\varrho(X_t)\Ind(\tau_D>t)-\rho_{(t\wedge \tau_D)-}
\]
is a martingale. Here we are using that $\int_0^{t\wedge \tau_D}L\varrho(X_s)ds=-(t\wedge \tau_D)$. Since it is a bounded martingale starting at $0$, it has an almost-sure limit as $t\ra\infty$ with expectation $0$, by Doob's $L^1$-convergence theorem. But $\varrho(X_t)\Ind(\tau_D>t)\ra 0$ almost surely as $\ra\infty$, hence $\expE[\lim_{t\ra\infty}\rho_{(t\wedge \tau_D)-}]=\rho_{\tau_D-}=0$. This implies the desired claim on bounded domains.

For unbounded domains, we combine the above with Remark \ref{rmk:xi barrier function}.
\end{proof}

Whereas we won't work directly with parabolic equations, we will use results relying on them. In each of these cases, the arguments we rely on will apply with no changes when we replace the usual notion of Dirichlet boundary conditions on locally Lipschitz domains (i.e. vanishing continuously along $\partial  D\times \Rm_{>0}$) with the following notion for general domains.
\begin{defin}\label{defin:parabolic equations general domains}
Given a solution $u(t,x)$ of a parabolic equation (either linear or of FKPP type), we say that $u\overset{\xi}{=}0$ if $u(t_n,x_n)\ra 0$ for any bounded sequence $(t_n,x_n)$ in $\Rm_{>0}\times  D$ with $t_n\ra t\in \Rm_{>0}$ and $\varrho(x_n)\ra 0$. 
\end{defin}

We consider the parabolic equation,
\begin{equation}\label{eq:parabolic equation}
\begin{split}
&\partial_tu=Lu+f(x,u),\quad x\in D,\; t>0,\\
&u\overset{\xi}{=}0 \text{ on $D\times \Rm_{>0}$},\quad u(t,x)\ra u_0(x)\text{ as $t\ra 0$ in $L^1_{\loc}$.}
\end{split}
\end{equation}
\begin{prop}\label{prop:parabolic equation well-posedness}
Assume Assumption \ref{assum:standing}. There is a unique solution to \eqref{eq:parabolic equation} for all bounded $u_0\geq 0$.
\end{prop}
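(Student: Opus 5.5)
We prove existence by monotone approximation and uniqueness by comparison with the barrier $\xi$. Throughout, $\rho_j$ denotes the BNV barrier of $D_j$, the exit time function of $D_j$, to avoid a clash with $u_j$ below.

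\emph{Existence.} Take an exhaustion of $D$ by smooth bounded subdomains $D_j\uparrow D$. On each cylinder $(0,T)\times D_j$, solve the classical problem
\[
\partial_t u_j=Lu_j+f(x,u_j),\qquad u_j=0 \text{ on } \partial D_j,\qquad u_j(0,\cdot)=u_0\Ind_{D_j}.
\]
By \eqref{fLip} the reaction is Lipschitz on $[0,M]$, where $M:=\max(S,\|u_0\|_\infty)$. By \eqref{S} and $f(x,0)=0$, the constants $0$ and $M$ are sub- and supersolutions. The comparison principle therefore gives $0\le u_j\le M$, and $u_j$ is non-decreasing in $j$, since $u_{j+1}\ge 0=u_j$ on $\partial D_j$. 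Set $u:=\lim_j u_j$.

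Interior parabolic $W^{2,1}_p$ and Schauder estimates, which are uniform in $j$ on compact subsets, show that $u$ solves the equation in $D\times\Rm_{>0}$. The initial datum is attained in $L^1_{\loc}$ by comparison with the solutions of the linear heat-type problem with constant $\pm\|\partial_s f\|_{L^\infty(D\times(0,M))}$ potential, combined with dominated convergence.

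For the boundary condition, compare $u$ with a barrier built from $\xi$. Let $K:=\|\partial_s f\|_{L^\infty(D\times(0,M))}$. For bounded $(t_n,x_n)$ with $t_n\to t>0$ and $\xi(x_n)\to0$, use the representation $u_j(t,x)\le e^{Kt}\,\mathbb{E}_x\big[M\Ind(\tau_{D_j}>t)\big]\le e^{Kt}\,M\,\Pm_x(\tau_D>t)$. This follows from the Feynman--Kac formula for the linear supersolution problem with potential $K$. By Remark \ref{rmk:xi barrier function}, $\xi(x_n)\to0$ is equivalent to $\tau_{D}$ started from $x_n$ tending to $0$ in probability. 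Hence $\Pm_{x_n}(\tau_D>t_n)\to0$ uniformly in $j$, which gives $u\overset{\xi}{=}0$ in the sense of Definition \ref{defin:parabolic equations general domains}.

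\emph{Uniqueness.} Let $u,v$ be two solutions bounded by some $M$, and set $w:=u-v$. Then $w$ satisfies the linear inequality $\partial_t|w|\le L|w|+K|w|$ in the Kato sense, with $w(0)=0$ in $L^1_{\loc}$ and $w\overset{\xi}{=}0$. We show $w\equiv0$ via the probabilistic representation. Apply It\^o's formula to $e^{-Ks}|w|(t-s,X_s)$ up to $\tau_{D_j}\wedge(t-\delta)$, and let $j\to\infty$. Proposition \ref{prop:xi vanishes a.s.} guarantees $\xi(X_s)\to0$ as $s\uparrow\tau_D$, so the boundary contribution vanishes by the $\overset{\xi}{=}$ condition and bounded convergence. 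Letting $\delta\to0$, the $L^1_{\loc}$ initial condition kills the remaining term, since the law of $X_{t-\delta}$ has a locally bounded density. This yields $|w|(t,x)\le 0$.

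Unboundedness of $D$ is handled by localising to $B(0,R)$ and using that the Brownian-type diffusion reaches $\partial B(0,R)$ before time $t$ with probability tending to $0$ as $R\to\infty$. This is uniform on compacts, since $a$ is bounded.

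\emph{Main obstacle.} The delicate step is the boundary behaviour in both parts on a completely irregular $\partial D$. The plan is to route everything through the exit-time characterisation of $\xi$ (Remark \ref{rmk:xi barrier function}) and Proposition \ref{prop:xi vanishes a.s.}, exactly as in the proof of \cite[Proposition A.5]{Maillard2025}. This replaces any classical boundary regularity argument.
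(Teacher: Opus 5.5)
Your proposal is correct and follows the route the paper indicates, namely adapting the proof of \cite[Proposition A.5]{Maillard2025}, with Proposition~\ref{prop:xi vanishes a.s.} supplying the vanishing of the solution along diffusion paths at an arbitrary boundary; you also make explicit the details the paper leaves to that reference, namely the Feynman--Kac bound $u(t,x)\le e^{Kt}M\,\Pm_x(\tau_D>t)$ for the boundary condition and the pathwise limit $j\to\infty$ in the uniqueness step. One small slip: since $\partial_t|w|\le L|w|+K|w|$, the local submartingale is $e^{Ks}|w|(t-s,X_s)$, not $e^{-Ks}|w|(t-s,X_s)$, and this yields $|w|(t,x)\le \expE_x\big[e^{K\sigma}|w|(t-\sigma,X_\sigma)\big]$ for the relevant stopping time $\sigma$.
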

The proof of this can be obtained by following the proof of \cite[Proposition A.5]{Maillard2025}.

\subsection{Extremal solutions}\label{sec:extremal}

Here we derive some standard facts about {\em extremal} solutions
to \eqref{s-a}. A positive, bounded
solution $v$ is said to be {\em maximal} (resp.~{\em minimal}) if $v\geq u$
(resp.~$v\leq u$) for any positive, bounded solution $u$.

\begin{prop}\label{prop:maximal solution}
Under the Assumption \ref{assum:standing}, the following holds:
\begin{enumerate}[$(i)$]
\item If \eqref{s-a} admits a positive, bounded solution, then it also admits
a maximal positive, bounded solution $v$; moreover 
$v\leq S$, with $S$ from Assumption~\ref{assum:standing}.
\item If $\lambda(-\L)<0$ then \eqref{s-a} admits a positive, bounded minimal solution. 
\end{enumerate}
\end{prop}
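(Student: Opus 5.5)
For part $(i)$, I will build the maximal solution as the long-time limit of the parabolic problem \eqref{eq:parabolic equation} started from a large constant. Fix a bounded positive solution $u$ and choose a constant $M>\max(S,\sup u)$. By \eqref{S}, $M$ is a supersolution of \eqref{s-a}: $LM+f(x,M)=f(x,M)<0$, and $M\geq0$ on $\partial D$. Let $w(t,x)$ be the solution of \eqref{eq:parabolic equation} with $w(0,\cdot)=M$, which exists and is unique by Proposition~\ref{prop:parabolic equation well-posedness}. Since $M$ is a supersolution, the comparison principle gives $w(t,\cdot)\leq M$ for every $t$. Applying comparison again to $w(t+h,\cdot)$ and $w(t,\cdot)$ shows that $w$ is non-increasing in $t$. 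The same comparison with the stationary solution $u\leq M$ gives $w\geq u$ for all $t$.

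It follows that $w(t,x)$ decreases to a limit $v(x)$ with $u\leq v\leq M$. Local parabolic estimates, which use \eqref{fLip} and the $C^1$ regularity of $a$, then show that $v$ is a $W^{2,p}_{\loc}$ (indeed classical) stationary solution. To check the boundary condition $v\overset{\xi}{=}0$, I would use that $w(1,\cdot)\overset{\xi}{=}0$ in the sense of Definition~\ref{defin:parabolic equations general domains} and that $v\leq w(1,\cdot)$. Because the limit does not depend on $u$, every bounded positive solution lies below $v$, so $v$ is maximal. For the bound $v\leq S$, let $s^*=\sup v$ and suppose $s^*>S$. If the supremum is attained at an interior point, then $Lv\leq0$ there while $f(x,v)<0$, which contradicts the equation. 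If it is only approached along a sequence, I would translate along that sequence and pass to a limit, using the Omori--Yau type argument and the uniformity in $x$ of \eqref{S} together with \eqref{fLip}. I expect this to be the main technical point, since $\sup_x f(x,s)<0$ is not assumed to be uniform in $s$ near $S$. A simpler route is to run the argument with any $S'>S$: the constant $S'$ is a strict supersolution, and comparing $v$ with $w$ started from $S'$, using uniqueness of the limit, already gives $v\leq S'$. Letting $S'\downarrow S$ then yields $v\leq S$.

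For part $(ii)$, I will use that $\lambda(-\L)<0$ together with the BNV approximation by bounded smooth subdomains. This gives a smooth bounded domain $D_R\subset\subset D$ with Dirichlet principal eigenvalue $\lambda_R<0$ and eigenfunction $\phi_R>0$. For small $\delta>0$, the function $\delta\phi_R$, extended by $0$ outside $D_R$, is a generalised subsolution, since $\L\phi_R=-\lambda_R\phi_R$ and $f(x,s)=\partial_sf(x,0)s+o(s)$ uniformly in $x$ by \eqref{fLip} and the continuity of $\partial_sf$. I will solve the parabolic problem from this initial datum. The solution increases in $t$, stays below $M$, and converges to a positive bounded solution $\underline v$.

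To see that $\underline v$ is minimal, I must show $\underline v\leq u$ for any positive bounded solution $u$. The idea is to show $\delta\phi_R\leq u$ and then conclude by comparison. Since $u>0$ on $\overline{D_R}$, this holds for $\delta$ small, but $\delta$ depends on $u$. To handle this, I will show the limit is independent of $\delta$. If $\delta'<\delta$, the solution started from $\delta'\phi_R$ gives a limit $\underline v'\leq\underline v$. In the other direction, $\underline v'$ is itself positive, so it dominates $\delta\phi_R$ multiplied by a small constant, and by the same monotone argument this gives $\underline v'\geq\underline v$. I would confirm this independence using the KPP-free Lipschitz argument from \eqref{fLip}. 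Finally, taking $\underline v$ to be the infimum over all such limits as $R$ grows and $\delta\to0$ gives the minimal solution.
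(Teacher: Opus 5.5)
Your overall scheme is the paper's: a decreasing parabolic limit from a large constant for $(i)$, and an increasing limit from a small Dirichlet eigenfunction on a subdomain for $(ii)$. There are, however, two genuine gaps. In both, a bound that must be uniform over all positive solutions is not established.

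\textbf{Part $(i)$.} Maximality does not follow as written. You choose $M>\sup u$, so the limit $v=v_M$ depends on $u$ through $M$, and comparison only shows $v_M\geq\tilde u$ for solutions $\tilde u\leq M$. What is needed is the a priori bound that \emph{every} bounded solution is $\leq S$. With it you can start once and for all from the constant $S$, which is a supersolution since $f(x,S)\leq0$ by continuity.

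Your ``simpler route'' to $v\leq S'$ is circular. Comparing $v$ with the flow from $S'$ requires already knowing $v\leq S'$, or that the limits from $M$ and from $S'$ coincide, which is the same statement.

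The Omori--Yau/translation sketch is not a proof here. Extracting limits of translates needs compactness of the coefficients and of $D$ under translation, which Assumption~\ref{assum:standing} does not provide. In particular $\partial D$ has no regularity, and the supremum may be approached near irregular boundary points where $v$ need not vanish.

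The missing device is simple. The function $F(s):=\sup_{x}f(x,s)$ is locally Lipschitz by \eqref{fLip} and negative on $(S,\infty)$. So your worry about non-uniformity near $S$ is unfounded: $F\leq-\delta<0$ on any compact subset of $(S,\infty)$. The solution $y$ of $\dot y=F(y)$ with $y(0)=C\geq\sup u$ is a spatially constant supersolution of the parabolic problem, and $y(t)\leq S+\epsilon$ for large $t$. Comparison with the stationary $u$ then gives $u\leq S$ for every bounded solution.

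\textbf{Part $(ii)$.} Your independence-of-$\delta$ argument is circular. From $\underline v'\geq c\,\delta\phi_R$ you only get $\underline v'\geq\underline v_{c\delta}$, a limit from an even smaller datum, not $\underline v_\delta$. Taking an infimum over $R$ and $\delta$ does not help: it may be identically $0$, and in general it is only a supersolution.

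The missing idea is that $\epsilon\phi_R$ is a subsolution for \emph{all} $\epsilon\in(0,\epsilon_0]$, where $\epsilon_0$ depends only on $f$ and $\phi_R$, not on $u$. Combine this with a touching argument. Given any positive bounded solution $u$, set $k:=\max_{\bar D_R}\phi_R/u$; this is finite because $u>0$ on the compact set $\bar D_R$. Then $\phi_R/k\leq u$, with contact at an interior point of $D_R$, because $\phi_R=0<u$ on $\partial D_R$. If $1/k\leq\epsilon_0$, the strong maximum principle would force $u\equiv\phi_R/k$ in $D_R$, contradicting the boundary values. Hence $u>\epsilon_0\phi_R$ on $\bar D_R$ for every positive bounded solution $u$, and comparison gives $u\geq\underline v_{\epsilon_0}$. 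This is exactly the paper's argument, and it also yields the independence of the limit from $\delta$ that you wanted.
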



\begin{proof}
$(i)$ Consider the evolution problem \eqref{eq:general reaction-diffusion}.
Let $S$ be from condition \eqref{S}. 
By comparison with the ODE $\dot y=\sup_{x\in D}f(x,y)$, we 
see that any solution to \eqref{eq:general reaction-diffusion}
with initial datum $C>S$ 
will eventually be at most $S+\epsilon$ everywhere, for any $\epsilon>0$.
Therefore any bounded solution to~\eqref{s-a}, i.e.~stationary solution
to~\eqref{eq:general reaction-diffusion}, is at most $S$. 
Consider now the solution with initial datum $\eta_0\equiv S$.
Therefore, by the parabolic comparison principle, 
$\eta(t,\.)$ is larger than or equal to any bounded solution to~\eqref{s-a},
for any $t\geq0$.
It also follows from the strong comparison principle that 
$\eta(t,x)$ is non-increasing in $t$, hence
$\overline u:=\lim_{t\ra\infty}\eta(t,x)$ exists (and it is non-negative).
By parabolic estimates, $\overline u$ is a stationary solution, hence 
it is the maximal one.
By the elliptic strong maximum principle, it
is either positive or identically equal to $0$. 
%
%
%
%
%
Thus if \eqref{s-a} admits a positive and bounded solution, then 
$\overline u>0$.


$(ii)$ 
Using Sard's theorem, we take a  sequence of $C^{\infty}$ domains 
$(D_n)_{n\in\N}$ satisfying
\[D_n\subset D_{n+1}\subset\subset D \ \text{ for all $n\in\N$},
\qquad \bigcup_{n\in\N}D_n=D.\]
These can be taken to always be connected by reducing to the connected component containing some fixed $x_{\ast}\in D$.\commentout{\nota{\BS explain that $D_n$ can be taken connected}} We further consider the sequence of Dirichlet
principal eigenfunctions $\varphi_n$ for $-\L$ on $D_n$, i.e.,
positive solutions of
\[
\left\{
\begin{array}{rl}
-\L\varphi_n=\lambda(-\L;D_n)\vp_n
 & \text{in }D_n,\\[5pt]
\vp_n=0 & \text{on }\partial D_n.
\end{array}\right.
\]
It is well known that the sequence 
$(\lambda(-\L;D_n))_{n\in\N}$ is non-increasing, and that 
it converges to $\lambda(-\L)$, see \cite{BR4}.
Hence, by hypothesis, $\lambda(-\L;D_n)<0$ for $n$ large enough.
Taken such $n$, we get, for $\e>0$,
\[-L(\e\vp_n)=(f_s(x,0) +\lambda(-\L;D_n))\e\vp_n\qquad\text{in }\; D_n,\]
which is less than $f(x,\e\vp_n)$ for $\e$ smaller than or equal to some $\e_0$,
depending on the $C^1$ norm of $s\mapsto f(x,s)$.
By \eqref{S} we have $\e_0\max\vp_n\leq S$.
Take $\e\in(0,\e_0]$. The function $\e\vp_n$, extended by $0$ outside $D_n$,
is a generalised subsolution to \eqref{s-a} in $D$, hence if we take
it as an initial datum for~\eqref{eq:general reaction-diffusion},
we infer from the comparison principle that the emerging solution
is non-decreasing in time, thus it converges as $t\to+\infty$ towards
a stationary solution $0<\ul u\leq S$.
We claim that this is the minimal positive solution.
Let indeed~$u$ be a positive and bounded solution to \eqref{s-a} in $D$.
Then letting 
\[k:=\max_{\bar D_n}\frac{\vp_n}u,\] we have that
$\vp_n/k$ touches $u$ from below somewhere in the domain $D_n$.
But then, if $k\geq1/\e_0$ we would have that a positive subsolution 
touches a solution from below, and thus they would coincide by the strong maximum
principle, but this cannot happen because $\vp$ vanishes on $\partial D_n$ while
$u$ does not. This shows that $k<1/\e_0$, that is, $u>\e_0\vp_n$ on $\bar D_n$, 
whence, again by comparison, $u\geq \ul u$ in $D$.
\end{proof}

\subsection{Uniqueness from non-existence at criticality}\label{sec:uniq from non-exis}
In this subsection we show how to obtain the uniqueness of steady states from non-existence at criticality, for an adjusted reaction term, under the 
Assumption \ref{assum:standing} and the KPP-type condition~\eqref{eq:mediumKPP f}.

Suppose that \eqref{s-a} admits a positive, bounded solution $u$,
then it also admits a maximal one, owing to Proposition \ref{prop:maximal solution}, denoted by $\ol u$. We consider the difference $w:=\ol u-u$, and we now
show that $w$ is a subsolution of a stationary FKPP equation whose linearised operator has non-negative generalised principal eigenvalue.
We have indeed that $w$ is a bounded and non-negative solution of
\begin{equation}\label{eq:w-h}
Lw+h(x,w)=0,\quad x\in  D,\qquad 
w\overset{\xi}{=} 0,\quad x\in  \partial D,
\end{equation}
with the function $h$ defined by
\Fi{h}
h(x,s):=f(x,u(x)+s)-f(x,u(x)),\quad x\in  D,\; s\geq0.
\Ff

\begin{prop}\label{pro:nonex-uniqueness}
Suppose that Assumption \ref{assum:standing} holds and 
that $f$ satisfies \eqref{eq:mediumKPP f}.
Suppose further that 
$u$ is a positive, bounded 
solution of \eqref{s-a}.
Then the function $h$ defined by \eqref{h}
satisfies the conditions on the reaction term $f$
in Assumption \ref{assum:standing},
and moreover
\[h(x,s)\leq \frac{f(x,u(x))}{u(x)}\,s
\quad\text{for }\; x\in  D,\; s>0,\]
and the above inequality is strict at all $x$ where $f$ satisfies the strong-KPP condition~\eqref{eq:strongKPP f x}.

Finally, if $f$ satisfies \eqref{concav1} then $h$ satisfies
\[
h(x,s)\leq \frac{f(x,u(x))}{u(x)}\,s-\sigma s^{1+\ell}
\quad\text{for }\; x\in  D,\; s>0.
\]
\end{prop}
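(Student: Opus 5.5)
The plan is to verify each property of $h$ directly from its definition \eqref{h}, using only the pointwise monotonicity properties of $s\mapsto f(x,s)/s$ and the bound $0<u\leq S$ from Proposition~\ref{prop:maximal solution}.

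\emph{Assumption~\ref{assum:standing} for $h$.} We have $h(x,0)=0$ trivially, which is \eqref{0}. The regularity $h\in C^1(D\times\Rm_{\geq0})$ and the Lipschitz bound \eqref{fLip} follow from the chain rule: $\partial_s h(x,s)=\partial_s f(x,u(x)+s)$. Since $u\in W^{2,d}_{\loc}$ is continuous, this is continuous, and since $0<u\leq S$ we get
\[
\sup_{0\leq s<S'}|\partial_s h(x,s)|\leq \sup_{0\leq\tau<S+S'}|\partial_s f(x,\tau)|<\infty
\]
uniformly in $x$. For \eqref{S}, note that $f(x,u(x))/u(x)\leq \sup_{\tau>0}f(x,\tau)/\tau\leq \sup_x\partial_s f(x,0)$, so $f(x,u(x))\leq C$ uniformly in $x$. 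Also $f(x,\tau)/\tau$ is non-increasing and negative for $\tau>S$, so $f(x,\tau)\leq \tau f(x,2S)/(2S)$ for $\tau\geq 2S$. However, $\sup_x f(x,2S)$ is only known to be $<0$, not uniformly bounded away from $0$. I would handle this as follows. Fix $s_0>S$. Then $f(x,s_0)/s_0\leq \sup_x f(x,s_0)/s_0=:-\delta<0$, since the supremum in \eqref{S} is strictly negative for each fixed $s$. Hence $f(x,\tau)\leq -\delta\tau$ for all $\tau\geq s_0$, uniformly in $x$. It follows that
\[
h(x,s)\leq -\delta(u+s)-f(x,u)\leq -\delta s + \|u\|_\infty\|\partial_sf\|_{L^\infty(D\times(0,S))}<0
\]
for $s$ large, uniformly in $x$. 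Here $|f(x,u)|\leq u\sup|\partial_s f|$ by \eqref{fLip} and \eqref{0}. This is the only mildly delicate point, and the obstacle is simply making sure the bound is uniform in $x$.

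\emph{The KPP-type inequality.} For $s>0$, write
\[
h(x,s)=(u+s)\frac{f(x,u+s)}{u+s}-u\frac{f(x,u)}{u}.
\]
By \eqref{eq:mediumKPP f}, $f(x,u+s)/(u+s)\leq f(x,u)/u$, so
\[
h(x,s)\leq (u+s)\frac{f(x,u)}{u}-f(x,u)=\frac{f(x,u(x))}{u(x)}\,s.
\]
At points where \eqref{eq:strongKPP f x} holds, the first inequality is strict, and since $u+s>0$ the final inequality is strict as well.

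\emph{Under \eqref{concav1}.} Apply \eqref{concav1} with $s_1=u(x)$ and $s_2=u(x)+s$, which gives
\[
\frac{f(x,u+s)}{u+s}\leq \frac{f(x,u)}{u}-\sigma s^\ell.
\]
Multiplying by $u+s$ yields
\[
h(x,s)\leq \frac{f(x,u)}{u}\,s-\sigma s^\ell(u+s)\leq \frac{f(x,u)}{u}\,s-\sigma s^{1+\ell},
\]
where the last step uses $u>0$.
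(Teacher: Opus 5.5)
Your proof is correct and follows essentially the paper's route: everything reduces to the monotonicity of $g=f/s$ through the splitting $h=g(x,u)s+(u+s)\big(g(x,u+s)-g(x,u)\big)$, and your treatment of the KPP bound, its strictness, and the \eqref{concav1} bound matches the paper's. Two minor remarks. For \eqref{S}, the paper instead writes $h=g(x,u+s)s+u\big(g(x,u+s)-g(x,u)\big)\le s\,g(x,s)=f(x,s)$, which gives \eqref{S} with the same $S$ and avoids your $\delta$/\eqref{fLip} detour; your worry about uniformity was unfounded, since $\sup_x f(x,s)<0$ is already a uniform bound. Also, joint $C^1$ regularity of $h$ requires $u\in C^1$, which comes from elliptic (Schauder) regularity, as the paper notes, rather than from $u\in W^{2,d}_{\loc}$ alone.
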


\begin{proof}
The regularity of $h$ and property \eqref{fLip} 
are inherited by the corresponding ones of $f$, because $u\in C^2(D)$
by elliptic estimates. We turn to the other conditions.
Property \eqref{0} is straightforward.
In order to check \eqref{S}, we abbreviate $g(x,s):=f(x,s)/s$ and rewrite $h$ as
\[h(x,s)= g(x,u(x)+s)s+u(x)\big(g(x,u(x)+s)-g(x,u(x))\big).\] 
The function $g(x,s)$ is non-increasing in $s\in(0,+\infty)$ 
by \eqref{eq:mediumKPP f},
and it is negative for $s> S$ by \eqref{S}.
As a consequence, $h$ fulfils \eqref{S}.

To show the remaining properties, we now write
\[h(x,s)= g(x,u(x))s+\big(u(x)+s\big)\big(g(x,u(x)+s)-g(x,u(x))\big),\]
whence $h(x,s)\leq g(x,u(x))s$ for $x\in D$, $s>0$,
and the inequality is strict whenever $f$ satisfies~\eqref{eq:strongKPP f x}.
Moreover, if $f$ satisfies \eqref{concav1} then
\[
h(x,s)\leq g(x,u(x))s-(u(x)+s)\sigma s^\ell
<g(x,u(x))s-\sigma s^{1+\ell},
\quad\; x\in  D,\; s>0.
\]
The proof is concluded.
\end{proof}

We have shown that the difference $\ol u-u$ is a bounded and non-negative 
subsolution for the operator
\[
\L_u w:=Lw+\frac{f(x,u(x))}{u(x)}\,w,
\]
and clearly $\lambda(-\L_u)\geq 0$, as it is seen by taking $u$ in its definition.

\subsection{Bounded domains}\label{section:bounded}
Whereas we are concerned with self-adjoint $L$ under the standing assumptions, Assumption \ref{assum:standing}, the following results on bounded domains remain true with no changes to the proofs if we replace $L$ with the non-divergence form operator $a_{ij}(x)\partial_{ij}+b_i(x)\partial_i$, for $a$ uniformly elliptic, continuous and bounded, and $b$ bounded and measurable. Thus \cite[Conjecture 1.3]{Berestycki2025} is true for general bounded, open sets.

We will impose the following assumption.
\begin{assum}\label{assum:generalisation of weak kpp for non-existence for uniq bded}
There exists bounded and measurable $c:D\ra \Rm$ such that
\begin{equation}\label{eq:inequalities for general weak KPP bded}
\lambda(-(L+c))\geq 0\quad\text{and}\quad f(x,s)\leq sc(x)\quad\text{for}\quad x\in D,\; s\geq 0.
\end{equation}
Moreover there exists $E\subset D$ with positive measure
such that the latter inequality in \eqref{eq:inequalities for general weak KPP bded} is strict for $x\in E$.
\end{assum}
\begin{rmk}\label{rmk:generalisation to c}
Assumption \ref{assum:generalisation of weak kpp for non-existence for uniq bded} in the case $c(x)=f_s(x,0)$ recovers 
the weak-KPP condition \eqref{eq:weakKPP f},
with the strict inequality for $x\in E$, along with $\lambda(-\L)\geq 0$. We 
extend here those assumptions because, for the uniqueness result,
we will consider an adjusted reaction term, as done in 
Section \ref{sec:uniq from non-exis}, for which Assumption 
\ref{assum:generalisation of weak kpp for non-existence for uniq bded} 
holds with $c$ different from the linearisation at $0$. A similar generalisation will be needed for our main theorem on unbounded domains, Theorem \ref{thm:Liouv6}.
\end{rmk}
\commentout{

\begin{equation}\label{eq:weakKPP f prelim sec}
f(x,s)\leq f_s(x,0)s,\quad \text{for} \; x\in  D,\; s\in (0,S].
\end{equation}
In place of the weak-KPP $\lambda(-(L+f_s(x,0)))\geq 0$ a

\OT{Replace $f_s$ with $c$}

We prove the following on non-existence at criticality.}
\begin{theo}\label{theo:nonexistence at criticality bded domains}
Suppose that Assumptions \ref{assum:standing} and 
\ref{assum:generalisation of weak kpp for non-existence for uniq bded}
hold, and that $D$ is bounded. Then \eqref{s-a} does not admit a positive, bounded solution.
\end{theo}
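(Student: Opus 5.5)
Suppose for contradiction that $u$ is a positive, bounded solution of \eqref{s-a} on the bounded domain $D$. By Assumption \ref{assum:generalisation of weak kpp for non-existence for uniq bded} we have
\[
-(L+c)u = f(x,u)-c(x)u \leq 0 \quad\text{in } D,
\]
with strict inequality on the positive-measure set $E$, because $u>0$ there. Moreover $u\overset{\xi}{=}0$ on $\partial D$, which on bounded domains is the same as $u\overset{\varrho}{=}0$ (Remark \ref{rmk:xi barrier function}). Thus $u$ is a positive, non-trivial subsolution of $-(L+c)$ that vanishes on the boundary in the sense of Berestycki--Nirenberg--Varadhan. The plan is to play this off against $\lambda(-(L+c))\geq0$ through the refined maximum principle of \cite{BNV1994} on general bounded domains.

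\textbf{Case $\lambda(-(L+c))>0$.} Here the refined maximum principle of \cite[Theorem 1.1]{BNV1994} applies. Any $w$ which is bounded above, satisfies $-(L+c)w\leq 0$, and has $w\overset{\varrho}{=}0$ on $\partial D$ must satisfy $w\leq 0$. Taking $w=u$ gives $u\leq0$, a contradiction. The only technical points are $u\in W^{2,d}_{\loc}$, which follows from elliptic regularity, and boundedness of $c$, which is assumed.

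\textbf{Case $\lambda(-(L+c))=0$.} This is the critical case and the main obstacle. The tool is the BNV result at criticality: on a bounded domain with $\lambda_1:=\lambda(-(L+c))$, there exists a principal eigenfunction $\varphi>0$ with $-(L+c)\varphi=\lambda_1\varphi$ and $\varphi\overset{\varrho}{=}0$. Any positive $w$ with $-(L+c)w\leq \lambda_1 w$ in $D$ and $w\overset{\varrho}{=}0$ must then be a multiple of $\varphi$; this is \cite[Corollary 2.4]{BNV1994} and the simplicity statement in \cite[Theorem 2.1]{BNV1994}. Applying this with $w=u$ and $\lambda_1=0$ gives $u=t\varphi$ for some $t>0$. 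Consequently $-(L+c)u=0$ everywhere in $D$. This contradicts the strict inequality $-(L+c)u<0$ on $E$, which has positive measure.

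If one prefers not to rely on the exact form of the critical statement in \cite{BNV1994}, there is a fallback for this case. Since $D$ is bounded, the domain monotonicity of the principal eigenvalue is strict when the potential is perturbed on a positive-measure set. So one perturbs $c$ to $c-\delta\Ind_{E'}$, where $E'\subset E$ has positive measure and is chosen so that $u$ remains a subsolution for small $\delta$. This would require a quantitative strictness on a subset; one can arrange this via Lusin's theorem, by choosing $E'$ on which $c(x)u-f(x,u)\geq\eta>0$. After the perturbation the principal eigenvalue is strictly positive, which reduces the problem to the first case. The catch is that strict monotonicity of $\lambda$ under such perturbations on irregular bounded domains must itself be justified. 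It should follow from the existence of the BNV principal eigenfunction $\varphi>0$ together with a comparison of Rayleigh-type quotients for the self-adjoint operator, using that $\varphi>0$ on $E'$.
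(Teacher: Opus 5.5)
Your argument is correct and is essentially the paper's. The paper invokes \cite[Corollary 2.2]{BNV1994} once, which directly gives that either $\lambda(-(L+c))<0$, or $\lambda(-(L+c))=0$ and $u$ is a multiple of the principal eigenfunction; this subsumes your separate refined-maximum-principle case $\lambda(-(L+c))>0$ and leads to the same contradiction with the strict inequality on $E$, so your perturbation fallback is not needed.
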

\begin{proof}
We suppose that $u$ is a positive and bounded solution of \eqref{s-a}. Then \eqref{eq:weakKPP f} yields
\[
(L+c(x))u+[f(x,u)-c(x)u]\leq 0.
\]
Since $a\in C^1(\bar  D)$ and $ D$ is bounded, $L$ can be put into non-divergence form with coefficients satisfying the assumptions of \cite{BNV1994}, whence we can apply \cite[Corollary 2.2]{BNV1994}. The latter tells us that either $\lambda(-(L+c(x)))<0$ (which is not the case here), or $\lambda(-(L+c(x)))=0$ and $u$ is a multiple of the 
generalised principal eigenfunction $\phi$ for $-(L+c(x))$ (which exists by \cite{BNV1994}). 

We now suppose that we have the latter possibility, so that $u=k\phi$ for some $k\in \Rm_{>0}$. Then $(L+c(x))u=(L+f_s(x,0))(k\phi)\equiv 0$, hence $f(x,u)-c(x)u\equiv 0$. However, this contradicts that the inequality in \eqref{eq:weakKPP f} is strict for $(x,s)\in E\times (0,S]$.
\end{proof}


The uniqueness result on bounded domains immediately follows.

\begin{theo}\label{theo:uniqueness bded}
Suppose that Assumption \ref{assum:standing} holds and that 
$ D\subseteq \Rm^d$ is bounded. 
Suppose further that $f$ satisfies \eqref{eq:mediumKPP f} and also the
strong-KPP condition~\eqref{eq:strongKPP f x} for all~$x$ in some set $E\subseteq  D$ with positive measure.
Then \eqref{s-a} admits at most one positive, bounded solution.
\end{theo}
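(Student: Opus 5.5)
We argue by contradiction, reducing uniqueness to the non-existence result of Theorem~\ref{theo:nonexistence at criticality bded domains}, as outlined in Section~\ref{sec:uniq from non-exis}. Suppose \eqref{s-a} has two distinct positive, bounded solutions. By Proposition~\ref{prop:maximal solution}$(i)$ there is a maximal solution $\ol u\leq S$, and at least one of the two given solutions, call it $u$, is different from $\ol u$. The function $w:=\ol u-u$ is then non-negative, bounded and not identically zero. It solves \eqref{eq:w-h} with the reaction $h$ defined in \eqref{h}, and it satisfies $w\overset{\xi}{=}0$ on $\partial D$, because both $\ol u$ and $u$ do. By the strong maximum principle applied to the linear equation $Lw+\big(h(x,w)/w\big)w=0$, whose zeroth-order coefficient is bounded by \eqref{fLip}, we get $w>0$ in $D$.

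Next we check that $h$ satisfies the hypotheses of Theorem~\ref{theo:nonexistence at criticality bded domains}.

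1. By Proposition~\ref{pro:nonex-uniqueness}, $h$ satisfies the conditions on the reaction term in Assumption~\ref{assum:standing}.

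2. Take $c(x):=f(x,u(x))/u(x)$. This is bounded and measurable: $|f(x,s)|\leq Ks$ for $s\leq S$ by \eqref{0} and \eqref{fLip}, and $u$ is positive and continuous with $u\leq S$.

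3. Proposition~\ref{pro:nonex-uniqueness} gives $h(x,s)\leq c(x)s$ for $s>0$, with strict inequality for $x\in E$, since \eqref{eq:strongKPP f x} holds on $E$. Equality at $s=0$ is trivial.

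4. We have $\lambda(-(L+c))\geq 0$, because $u>0$ satisfies $-(L+c)u=0$ and is therefore admissible in \eqref{eq:gen prin evalue} with $\lambda=0$. The regularity needed here, $u\in W^{2,d}_{\loc}(D)$, follows from interior elliptic estimates.

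Hence Assumption~\ref{assum:generalisation of weak kpp for non-existence for uniq bded} holds for $h$ with this $c$. Since $D$ is bounded, Theorem~\ref{theo:nonexistence at criticality bded domains}, applied with $h$ in place of $f$, says that the problem \eqref{eq:w-h} has no positive bounded solution. This contradicts $w>0$, so the positive, bounded solution is unique.

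The main point needing care is that Theorem~\ref{theo:nonexistence at criticality bded domains} is phrased for the equation $Lw+h(x,w)=0$ with boundary condition $\overset{\xi}{=}0$. We must therefore make sure the difference $w$ inherits that boundary condition in the same sense. This is immediate from the definition: if $\xi(x_n)\to0$, then $\ol u(x_n)\to0$ and $u(x_n)\to0$, hence $w(x_n)\to0$.

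A second point is that the proof of Theorem~\ref{theo:nonexistence at criticality bded domains} uses only the inequality $(L+c)w\leq0$ together with the strictness of $h(x,s)<c(x)s$ on $E$ at the values $s=w(x)$. Both hold here because $w>0$ everywhere. The contradiction step in that proof, where $w=k\phi$ forces $h(x,w)=c(x)w$, therefore fails on $E$, which has positive measure, exactly as required.
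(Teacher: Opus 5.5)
Your proposal is correct and follows the paper's proof essentially verbatim: take the maximal solution $\ol u$, apply Proposition~\ref{pro:nonex-uniqueness} to $w=\ol u-u$ with $c(x)=f(x,u(x))/u(x)$, use $u$ itself to get $\lambda(-(L+c))\geq 0$, and conclude via Theorem~\ref{theo:nonexistence at criticality bded domains}. The extra checks you supply (positivity of $w$ by the strong maximum principle, inheritance of the $\overset{\xi}{=}0$ boundary condition, and boundedness of $c$) are details the paper leaves implicit, and they are all sound.
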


\begin{proof}
Suppose, for contradiction, that there exist two distinct solutions of \eqref{s-a}. Then by Proposition \ref{prop:maximal solution} there exist
$0<u< \ol u\leq S$ solutions of \eqref{s-a}, with $\ol u$ being the maximal solution. 
The difference $w:=\ol u-u$ satisfies \eqref{eq:w-h} with $h$ 
defined by \eqref{h}. 
Proposition \ref{pro:nonex-uniqueness} implies that 
the function $h$ fulfils the conditions 
in Assumption \ref{assum:standing}, and also in Assumption
\ref{assum:generalisation of weak kpp for non-existence for uniq bded},
with $c(x)=f(x,u(x))/u(x)$ (notice 
that $\lambda(-L+c)\geq 0$ just by taking $u$ in its definition).
Theorem \ref{theo:nonexistence at criticality bded domains} then gives a contradiction.
%
%
\end{proof}

\subsection{Proof of Theorem \ref{theo:existence and uniqueness two dimensions}}

Theorems \ref{theo:nonexistence at criticality bded domains} and \ref{theo:uniqueness bded} remain true if we replace the assumption that $ D\subseteq \Rm^d$ is bounded with the assumption that $d\leq 2$, and Assumption \ref{assum: extra assumption for Nordmann}. We note that, as discussed on Page \pageref{discussion of bounded domains}, self-adjointness then becomes necessary.

The key ingredient is the following critical maximum principle by S. Nordmann.
\begin{theo}[Critical maximum principle, Theorem $2$ of \cite{Nord}]\label{thm:Nord}
Suppose that $d\leq 2$, that $ D$ is non-empty, open and connected, $\partial D$ is $C^{2,\alpha}_{\loc}$ smooth, that $a$ is bounded, uniformly elliptic and $C^{1,\alpha}_{\loc}$, and $c$ is $C^{0,\alpha}_{\loc}$, for some $\alpha\in (0,1)$. Suppose further that $\lambda(-(L+c))\geq 0$. Then if $u$ is bounded, non-negative and satisfies $(L+c)u\geq 0$, then either (1) $u\equiv 0$, or (2) $u$ satisfies
\Fi{s-a_linear}
\begin{cases}
(L+c)u=0 & \text{in } D,\\
u=0 & \text{on }\partial D,
\end{cases}
\Ff
and $\lambda(-(L+c))=0$.
\end{theo}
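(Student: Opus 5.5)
The statement to prove is Nordmann's critical maximum principle in $d\le 2$. The approach is a ground-state (Agmon–Allegretto–Piepenbrink) substitution combined with a logarithmic cutoff, which is exactly where $d\le2$ enters.

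First, from $\lambda(-(L+c))\ge 0$, produce a positive function $\phi$ with $(L+c)\phi\le 0$ in $D$. Take an exhaustion $D_n\uparrow D$ by smooth bounded domains. The Dirichlet principal eigenvalues satisfy $\lambda(-(L+c);D_n)\searrow\lambda(-(L+c))\ge0$ (\cite{BR4}). Normalise the eigenfunctions $\phi_n(x_\ast)=1$ and pass to the limit using the Harnack inequality and Schauder estimates; the regularity hypotheses $a\in C^{1,\alpha}_{\loc}$, $c\in C^{0,\alpha}_{\loc}$ and $\partial D\in C^{2,\alpha}_{\loc}$ give local uniform bounds up to the boundary. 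The limit $\phi>0$ solves $-(L+c)\phi=\lambda\phi\ge0$ in $D$ with $\phi=0$ on $\partial D$.

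Second, given $u\ge0$ bounded with $(L+c)u\ge0$, write $u=\phi v$. The identity
\[
\partial_i(\phi^2 a_{ij}\partial_j v)=\phi(L+c)u-u(L+c)\phi
\]
gives $\partial_i(\phi^2a_{ij}\partial_jv)\ge0$. Multiply by $v\zeta^2$, with $\zeta$ a cutoff, and integrate by parts. Boundary terms vanish because $u=0$ on $\partial D$ in the appropriate sense: $v$ stays bounded near smooth boundary points by the Hopf lemma for $\phi$ and a boundary Harnack/comparison argument. Cauchy–Schwarz then yields
\[
\int\phi^2 a\nabla v\cdot\nabla v\,\zeta^2\le C\int u^2|\nabla\zeta|^2 .
\]
Choose the logarithmic cutoff $\zeta_R=1$ on $B_R$, $\zeta_R=2-\log|x|/\log R$ on $B_{R^2}\setminus B_R$, and $\zeta_R=0$ outside. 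Then
\[
\int|\nabla\zeta_R|^2\lesssim \frac{1}{\log^2R}\int_R^{R^2}r^{d-3}\,dr ,
\]
which tends to $0$ for $d=2$ (it is $\sim 1/\log R$) and for $d=1$. Since $u$ is bounded, $\nabla v\equiv0$, so $u=k\phi$.

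Third, if $k>0$, then $(L+c)u\ge0$ and $(L+c)\phi=-\lambda\phi\le0$ force $\lambda=0$ and $(L+c)u=0$, with $u=0$ on $\partial D$ inherited from $\phi$. Hence $u$ solves \eqref{s-a_linear} and $\lambda(-(L+c))=0$; otherwise $u\equiv0$.

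The main obstacle is the first step when $\lambda(-(L+c))>0$ and $D$ is unbounded. The limit $\phi$ could a priori fail to control $u$ at the boundary, and one must justify the integration by parts near $\partial D$ with only local regularity. I would handle this by taking the cutoff also vanishing in a thin layer near $\partial D$, controlling that layer via boundary gradient estimates on $u$ and $\phi$ (both vanish on $\partial D$, so $v$ extends continuously by Hopf), and then letting the layer shrink before sending $R\to\infty$.
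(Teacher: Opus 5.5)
Your proposal is correct and follows essentially the route behind the cited result. Nordmann's proof rests on the Berestycki--Caffarelli--Nirenberg Liouville lemma applied to the quotient $u/\phi$, with $\phi$ a positive generalised principal eigenfunction, and that is exactly your ground-state substitution plus cutoff (your logarithmic cutoff being a standard variant). Your thin boundary layer is also the device the paper itself uses in Lemma~\ref{lem:weak}.

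There are two small points to fix. First, you rightly use $u=0$ on $\partial D$. The statement leaves this implicit, but it is needed: $u\equiv1$ on $(0,\infty)$ with $L=\partial_{xx}$, $c=0$ satisfies the hypotheses as printed and violates the conclusion. Second, to get estimates for $\phi_n$ up to $\partial D$ you should take $D_n$ agreeing with $D$ on $B_{n-1}$ rather than $D_n\subset\subset D$, or simply invoke \cite[Theorem~1.4]{BR4}.
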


This result is derived in \cite{Nord} for more general boundary conditions. It relies on a 
Liouville-type result by H.~Berestycki, L.~Caffarelli, and L.~Nirenberg \cite{BCN1},
from which the constraint on the dimension comes from. It fails in dimension $d\geq3$. Note that \cite{Nord} is unpublished as the author has left academia, however we have carefully checked the proof of \cite[Theorem 2]{Nord}. 


We use Theorem \ref{thm:Nord} in place of Theorem \ref{theo:uniqueness bded}, with Assumption \ref{assum: extra assumption for Nordmann} ensuring that the $C^{k,\alpha}_{\loc}$ assumptions in Theorem \ref{thm:Nord} are satisfied. Note that we are using here that the H{\"o}lder exponent in De Giorgi estimates depends only on the dimension and ellipticity constants, not the distance to the boundary, to ensure that the hypothesised solution $u$ is $C^{0,\alpha}_{\loc}$. Thus we have Theorem \ref{theo:existence and uniqueness two dimensions}.\qed

\section{Liouville results}\label{section:Liouville}

We recall that we consider positive and bounded solutions of
\Fi{s-a proof}
\left\{\begin{array}{ll}
Lu+f(x,u)=0, & x\in D,\\
u(x)\overset{\xi}{=}0, & x\in\partial D,
\end{array}\right.
\Ff 
where $f$ is assumed to satisfy the conditions in Assumption \ref{assum:standing}, and the meaning of the Dirichlet boundary condition $\overset{\xi}{=}$ was explained in Section \ref{section:Barrier}.

We define in this section
\[
\mathcal{L}:=L+c(x),
\]
which generalises the definition of $\L$ in the introduction. We prove the following theorems on non-existence at criticality and uniqueness. In the following $\ell>0$ is a parameter ($\ell\geq 1$ for our uniqueness theorem), and we assume $d\subseteq \Rm^d$ with $d\leq 2+\frac{4}{\ell}$.

\begin{theo}\label{thm:Liouv6}
	We assume that  Assumption \ref{assum:standing} holds. We further assume that there exists $c\in L^\infty(D)$ and $\sigma,\ell>0$ such that $d\leq 2+\frac{4}{\ell}$ and $f$ satisfies 
	\begin{equation}\label{proof f<f'-}
\text{$\ f(x,s)\leq c(x)s-\sigma s^{1+\ell}\ $ for all $\ s>0$, $x\in  D$.}
\end{equation}
Then if $\lambda(-(L+c))\geq 0$, the problem \eqref{s-a proof} does not admit any positive bounded solution.
\end{theo}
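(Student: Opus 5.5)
Suppose for contradiction that $u$ is a positive bounded solution of \eqref{s-a proof}; by Proposition \ref{prop:maximal solution} we have $0<u\leq S$. Since $\lambda(-(L+c))\geq0$, I would take a positive $\phi\in W^{2,d}_{\loc}(D)$ with $-(L+c)\phi\geq 0$, ideally a generalised principal eigenfunction satisfying $(L+c)\phi=-\lambda\phi\leq0$. It is obtained as the limit of Dirichlet eigenfunctions $\vp_n$ on the exhausting smooth domains $D_n$ from the proof of Proposition \ref{prop:maximal solution}. From \eqref{proof f<f'-},
\[
-(L+c)u\geq \sigma u^{1+\ell}.
\]
The idea is to test this against $\phi$ via the ground-state substitution $u=\phi v$. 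Because $L$ is in divergence form (self-adjoint), the standard identity gives $\phi\,(L+c)(\phi v)=\partial_i(a_{ij}\phi^2\partial_j v)+\phi v\,(L+c)\phi$. Hence
\[
-\partial_i(a_{ij}\phi^2\partial_j v)\geq \sigma\phi^{2+\ell}v^{1+\ell}.
\]
So $v$ is a positive supersolution of a weighted ``Lane--Emden'' inequality with weight $\phi^2$. To avoid the boundary entirely, I would work on each $D_n$ with $\vp_n$, which vanishes on $\partial D_n$, while $u$ stays bounded below on $\bar D_n$. The boundary terms then have a good sign, or vanish when a test function is supported inside, and no regularity of $\partial D$ enters.

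Next I would run a Mitidieri--Pohozaev-type test-function argument. Multiply the inequality by $\psi_R^k$, where $\psi_R$ is a cutoff equal to $1$ on $B_R$ and supported in $B_{2R}$, and integrate by parts once. Young's inequality then gives an estimate of the form
\[
\sigma\int \phi^{2+\ell}v^{1+\ell}\psi_R^k \lesssim \int \phi^2 |\nabla v||\nabla\psi_R|\psi_R^{k-1},
\]
or, after a second integration by parts, a bound by $\int\phi^2 v\,|\nabla\psi_R|^2\psi_R^{k-2}$. Since $\phi v=u\le S$, the weight combines well. Writing $\phi^2v=\phi\cdot u$ and applying H\"older with exponents $1+\ell$ and $(1+\ell)/\ell$ against $\phi^{2+\ell}v^{1+\ell}=\phi\,u^{1+\ell}$ leaves a remainder of the form $\int_{B_{2R}}\phi\,R^{-2(1+\ell)/\ell}$. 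The clean choice is to test with $\phi^{-\ell/(1+\ell)}$-adjusted powers, so that $\phi$ cancels. The scaling count I aim for is
\[
\int_{B_R}u^{1+\ell}\lesssim R^{\,d-2(1+\ell)/\ell},
\]
but this alone only gives decay when $d<2+2/\ell$. That is too weak, so the actual argument must exploit $\phi$ more cleverly, via the $\phi^2$ weight and the bound $u\le S$, effectively gaining the square in $d\le 2+4/\ell$. I would use $u^{1+\ell}$ against $\phi$ with $\int_{B_R}\phi^2 \cdot(\ldots)$ normalised so that $\phi$ enters quadratically. The target is a bound of $\int_{B_R}\phi\, u^{1+\ell}$ by $\big(R^{d-2-4/\ell}\big)^{\theta}$ times quantities controlled by $\int_{B_{2R}\setminus B_R}\phi\,u^{1+\ell}$. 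In the borderline case $d=2+4/\ell$ the right-hand side is bounded, so the integral over all of $D$ is finite, and the annular tail then tends to $0$. Either way $\int_D \phi\,u^{1+\ell}=0$, contradicting $u>0$.

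The main obstacle is precisely this exponent bookkeeping: choosing the powers of $v$ (testing with $v^{-\gamma}\psi_R^k$, $\gamma\in(0,1)$, in the spirit of Serrin--Zou or Mitidieri--Pohozaev), and choosing the normalisation of $\phi$ (for instance by $\int_{B_R}\phi^2\lesssim R^d$ via $\phi\leq$ a multiple of a barrier, or by replacing $\phi$ with $\min$ or a power of $u$-related comparisons), so that $\phi$ cancels and the sharp exponent $2+4/\ell$ emerges rather than $2+2/\ell$. I expect the key trick to be that the negative power $v^{-\gamma}$ converts the $L^\infty$ bound on $u$ into the extra factor of two. Justifying the integrations by parts near $\partial D$ by approximation on $D_n$ is secondary.
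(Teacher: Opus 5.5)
There is a genuine gap, and it starts with a sign error. From $Lu+f(x,u)=0$ and $f(x,s)\le c(x)s-\sigma s^{1+\ell}$ one gets $0\le Lu+cu-\sigma u^{1+\ell}$, that is $(L+c)u\ge\sigma u^{1+\ell}$. So $u$ is a \emph{subsolution}, not a supersolution. After the substitution $u=\phi v$ with $-(L+c)\phi=\lambda\phi$, $\lambda\ge0$, the correct weighted inequality is $\partial_i(a_{ij}\phi^2\partial_j v)\ge\sigma\phi\,u^{1+\ell}+\lambda\phi u\ge\sigma\phi\,u^{1+\ell}$.

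The Lane--Emden supersolution problem you set up is a different problem, and for it the threshold $2+2/\ell$ that your count produces is genuinely sharp. For $d>2+2/\ell$, take $L=\Delta$ and $c=0$, so that $\lambda(-\Delta)=0$. Then $\epsilon(1+|x|^2)^{-1/\ell}$ with $\epsilon$ small is a positive bounded solution of $-\Delta u\ge\sigma u^{1+\ell}$. Hence no choice of test powers $v^{-\gamma}$ or normalisation of $\phi$ can reach $2+4/\ell$ from the inequality you use. Negative powers in the Serrin--Zou spirit are the tool for supersolutions, which is the wrong direction here. Testing against $\phi\psi_R$ would also require growth bounds on $\phi$, which are not available since $\phi$ is in general unbounded.

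The missing idea is to test the correct inequality with the positive power $\chi_R^2v=\chi_R^2u/\phi$, i.e.\ essentially with $u$ itself. This gives
\[
\int\chi_R^2\phi^2|\nabla v|_A^2+\sigma\int\chi_R^2u^{2+\ell}\le 2\int\chi_R\,u\,|\nabla\chi_R|_A\,\phi|\nabla v|_A .
\]
Here $\phi$ cancels from the cross term because $\phi^2v=\phi u$. Cauchy--Schwarz, followed by H\"older of $\int_{B_{2R}}u^2$ against $L^{2+\ell}$, produces the factor $R^{-1}|B_{2R}|^{\ell/(2(2+\ell))}$. This factor is bounded precisely when $d\le 2+4/\ell$. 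The gain over your count comes from working with $u^{2+\ell}$ (testing with $u$) rather than $u^{1+\ell}$ (testing with $\phi$). Boundedness of $u$ enters only through a doubling lemma, which selects radii with $\int_{B_{2R}}u^{2+\ell}\le M\int_{B_R}u^{2+\ell}$ for a fixed $M>2^d$.

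One then obtains uniform bounds on $U_R=\int_{B_R}u^{2+\ell}$ and $H_R^2=\int\chi_R^2\phi^2|\nabla v|_A^2$. Choosing radii where the annular energy $H_R^2-H_{R/2}^2$ is small (your instinct for the critical case, applied to this energy) forces $\nabla(u/\phi)=0$ in the limit. Thus $u$ is a multiple of $\phi$ and $(L+c)u\le0$, contradicting $(L+c)u\ge\sigma u^{1+\ell}>0$.

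Your boundary treatment also fails as stated. Since $u>0$ on $\partial D_n$ while $\vp_n=0$ there, $u/\vp_n$ blows up at $\partial D_n$. The boundary contribution from testing with $\chi_R^2u/\vp_n$ is then positive and unbounded, and it sits on the side of the inequality that must be controlled. The paper avoids this by replacing $u$ with $u_n$, the solution of $(L+f(x,u)/u)\,u_n=0$ in $D_n\cap B_{R_{\max}}$ that vanishes on $\partial D_n$ and equals $u$ on $\partial B_{R_{\max}}$. Then $u_n/\vp_n$ is continuous up to $\partial D_n$ by Hopf's lemma, and $u_n\uparrow u$.
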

We are using $c$ to generalise $\partial_sf(x,0)$ as it is the former which we shall use to prove uniqueness of steady states (see Remark \ref{rmk:generalisation to c}).

We will need the condition \eqref{concav1} for our uniqueness theorem, which is:
\Fi{concav1 proof}\left\{\begin{array}{l} \text{there exists a positive constant
$\sigma>0$ such that}\\
\displaystyle  f(x,s_2)/s_2-f(x,s_1)/s_1\leq -\sigma (s_2-s_1)^{\ell}\\
\text{for all} \quad 0<s_1<s_2\leq S\quad\text{and}\quad x\in D.
\end{array}\right.
\Ff
We now assume that $\ell\geq 1$, so that \eqref{concav1 proof} can be satisfied.

\begin{theo}\label{theo:uniqueness proof}
We assume that we have Assumption \ref{assum:standing}, $\ell\geq 1$ and $ D\subseteq \Rm^d$ for $d\leq 2+\frac{4}{\ell}$. We further assume that $f$ satisfies \eqref{concav1 proof}. Then \eqref{s-a proof} admits at most one positive, bounded solution.
\end{theo}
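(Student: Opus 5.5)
The plan is to reduce Theorem~\ref{theo:uniqueness proof} to the non-existence result Theorem~\ref{thm:Liouv6}, following the same route as the proof of Theorem~\ref{theo:uniqueness bded} on bounded domains.

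Argue by contradiction and assume that \eqref{s-a proof} has two distinct positive, bounded solutions. By Proposition~\ref{prop:maximal solution}$(i)$ there is then a maximal solution $\ol u\le S$. Hence there exist solutions $0<u\le\ol u\le S$ with $u\not\equiv\ol u$. Applying the strong maximum principle to the difference, which satisfies a linear equation with bounded zeroth-order coefficient thanks to \eqref{fLip}, we may take $w:=\ol u-u>0$ in $D$.

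Next, $w$ is bounded and solves \eqref{eq:w-h} with $h$ given by \eqref{h}. The boundary condition $w\overset{\xi}{=}0$ is inherited from $u$ and $\ol u$, since both vanish along sequences with $\xi(x_n)\to0$.

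We then invoke Proposition~\ref{pro:nonex-uniqueness}. It shows that $h$ satisfies the conditions on the reaction term in Assumption~\ref{assum:standing}. Moreover, since $f$ satisfies \eqref{concav1 proof}, the proposition gives
\[
h(x,s)\le c(x)s-\sigma s^{1+\ell},\qquad c(x):=\frac{f(x,u(x))}{u(x)}.
\]
There is one subtlety. Condition \eqref{concav1 proof} is only assumed for $s_2\le S$, whereas Proposition~\ref{pro:nonex-uniqueness} was stated with \eqref{concav1} for all $s_2$. The resulting inequality is needed only for $0<s\le \ol u-u\le S-u$, that is, for $u+s\le S$. For larger $s$ one modifies $h$ above $S$ so that it keeps satisfying \eqref{S}. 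A simple choice is to redefine $h(x,s)$ for $s\ge S-u(x)$ as $\min\{h(x,s),\,c(x)s-\sigma s^{1+\ell}\}$, suitably smoothed. This does not affect the equation solved by $w$, which only samples $s\le S-u(x)$. The same remark applies to the monotonicity \eqref{eq:mediumKPP f}, which follows from \eqref{concav1 proof} on $(0,S]$.

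We also need the two hypotheses on $c$ required by Theorem~\ref{thm:Liouv6}. First, $c\in L^\infty(D)$: indeed $|f(x,s)/s|\le \sup_{D\times[0,S]}|\partial_sf|<\infty$ by \eqref{0} and \eqref{fLip}. Second, $\lambda(-(L+c))\ge0$: this holds because $u>0$ solves $-(L+c)u=0$, so taking $\phi=u$ and $\lambda=0$ in \eqref{eq:gen prin evalue} shows the admissible set contains $0$.

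Theorem~\ref{thm:Liouv6}, applied with reaction $h$, potential $c$ and the same $\sigma,\ell$ (so that $d\le2+4/\ell$), then forbids any positive bounded solution of $Lw+h(x,w)=0$ with $w\overset{\xi}{=}0$. This contradicts the existence of $w$, and the theorem follows.

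All the real difficulty lies in Theorem~\ref{thm:Liouv6} itself, which we are assuming. Within this reduction, the only delicate points are the truncation issue (the restriction $s_2\le S$ in \eqref{concav1 proof}) and checking that $h$ meets the $C^1$ regularity requirement in $x$. For the latter, $u\in C^2$ by elliptic estimates, and Remark~\ref{rmk:generalising f Holder} allows the weaker Hölder-type regularity if it is needed.
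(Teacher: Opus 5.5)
Your proposal is correct and matches the paper's proof. The paper also takes the maximal solution $\overline u$ and a second solution $u$, applies Proposition~\ref{pro:nonex-uniqueness} to $w=\overline u-u$ with $c(x)=f(x,u(x))/u(x)$ (so that $\lambda(-(L+c))\geq 0$ by testing with $u$), and derives a contradiction from Theorem~\ref{thm:Liouv6}. Your extra checks are details the paper passes over silently, and they are sound: strict positivity of $w$, boundedness of $c$, and modifying $h$ above $S-u(x)$ because \eqref{concav1 proof} is only assumed for $s_2\le S$. The modification could be avoided, since the proof of Theorem~\ref{thm:Liouv6} only uses \eqref{proof f<f'-} at the values $s=u(x)$ of the solution.
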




\subsection{Proof of Theorem \ref{thm:Liouv6}}


We assume $D$ is unbounded; Theorem \ref{theo:nonexistence at criticality bded domains} covers the $D$ bounded case. By adding a constant to $c$ if necessary, we assume without loss of generality that $\lambda(-\L)=0$. 

Our proof is broken down into the following steps:
\begin{enumerate}
\item We assume that a solution of \eqref{s-a proof}, $u$, exists. We construct a sequence of solutions, $u_n$, of linearised versions of \eqref{s-a proof} on an increasing family of subdomains $(D_n)$, which we show converge to $u$.
\item Denoting the principal eigenfunction of $\L$ on $D_n$ by $\varphi_n$, we derive an inequality satisfied by $u_n$, $\varphi_n$, and their ratio. This is \eqref{eq:inequality 1 for proof}.
\item We define two quantities, $H_R$ and $U_R$, in terms of $u_n$ and $\varphi_n$ on $D_n\cap B_R$. We establish they satisfy an inequality, \eqref{HR3}. The dimension threshold is seen here, as the growth in $R$ of $\lvert D_n\cap B_R\rvert$ determines its coefficients.
\item We prove that along an appropriate subsequence, the coefficients in this inequality are suitably bounded. This is \eqref{eq:HR4}.
\item We use this to derive a quadratic inequality, which in turn we use to obtain bounds on $H_R$ and $U_R$. The dimension constraint comes out here in \eqref{eq:appearance of dim constraint}.
\item Having uniformly bounded $H_R$ and $U_R$, we feed these back into \eqref{eq:HR4} to see that $H_R$ can be made as small as one likes.
\item These controls on $H_R$ are uniform in $n$. By taking appropriate limits, we show that in fact $u$ must be $\L$-harmonic. This is a contradiction.
\end{enumerate}

\subsubsection*{Step 1}

We consider the same sequence of $C^{\infty}$ domains 
$(D_n)_{n\in\N}$ as in the proof of Proposition~\ref{prop:maximal solution} $(ii)$,
and the corresponding principal eigenvalues 
$\lambda(-(L+c);D_n)$ and eigenfunctions $\vp_n$.
We assume in addition that $0\in D_1$, which incurs no loss of generality by translation, and normalise the $\vp_n$ by $\varphi_n(0)=1$.
We denote $B_R:=B(0,R):=\{x\in \Rm^d:\lvert x\rvert<R\}$, for $R\in (0,\infty)$\commentout{\nota{\BS first time we
use this notation?}}. We write as in Section \ref{sec:uniq from non-exis},
\[
g(x,s):=f(x,s)/s,\quad x\in D,\; s>0.
\]

We assume for contradiction that $u>0$ is a positive and bounded solution 
of~\eqref{s-a proof}. 
In the following, $R_{\max}<\infty$ is to be determined, and assumed to be such that $\partial B_{R_{\max}}$ and $\partial D_n$ are transversal for all $n<\infty$. 

Since $D_n\subset\subset D$, $D$ is unbounded, $c(x,s)\geq f(x,s)/s$ and $\lambda(-(L+c);D)=0$, we have
\begin{equation}\label{eq:evalue pve}
\lambda(-(L+g(x,u(x)));D_n\cap B_{R_{\max}})> \lambda(-(L+g(x,u(x)));D)\geq   0.
\end{equation}

Thus there exists a unique solution, $u_n$, of the Dirichlet problem
    \[
    \left\{\begin{array}{ll}
(L+g(x,u(x))u_n=0, & x\in D_n\cap B_{R_{\max}},\\
u_n(x)=0, & x\in\partial D_n \cap B_{R_{\max}},\\
u_n(x)=u(x), & x\in D_n \cap \partial B_{R_{\max}},
\end{array}\right.
\]
Here we are considering $g(x,u(x))$ to be the zeroth order coefficient, so that the above is a linear elliptic equation with Dirichlet boundary conditions (along the piecewise $C^{\infty}$ boundary $\partial(D_n\cap B_{R_{\max}})$). 

In this proof, $\alpha$ is a H{\"o}lder exponent which is global, depends only upon $d$ and the ellipticity constants of $a$, but which may decrease from line to line (for instance in order to take convergent subsequences).

\begin{lem}\label{lem:convergence un to u}
We have $u_n\uparrow u\leq S$ in $C^{2,\alpha}_{\loc}$ as $n\ra\infty$.  
\end{lem}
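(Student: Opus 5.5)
Three things must be shown: $u\leq S$, $0\leq u_n\leq u_{n+1}\leq u$, and $u_n\to u$ locally in $C^{2,\alpha}$.

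\emph{Bound $u\leq S$.} This is part $(i)$ of Proposition~\ref{prop:maximal solution}: $u$ is a positive bounded solution, hence dominated by the maximal solution, which is at most $S$.

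\emph{Ordering.} The key observation is that $u$ itself solves the same linear equation $(L+g(x,u(x)))u=0$ in $D$, because $g(x,u)u=f(x,u)$. On $\partial(D_n\cap B_{R_{\max}})$ we have $u_n=0\leq u$ on $\partial D_n\cap B_{R_{\max}}$ and $u_n=u$ on $D_n\cap\partial B_{R_{\max}}$. Moreover \eqref{eq:evalue pve} gives $\lambda(-(L+g(x,u));D_n\cap B_{R_{\max}})>0$. On this smooth-enough bounded domain the linear operator therefore satisfies the maximum principle (the refined maximum principle of \cite{BNV1994}, applied to $u-u_n$). Hence $u_n\leq u$, and likewise $u_n\geq 0$. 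The same argument applied to $u_{n+1}-u_n$ on $D_n\cap B_{R_{\max}}$ gives monotonicity: there $u_{n+1}$ is a solution, $u_{n+1}\geq 0=u_n$ on $\partial D_n\cap B_{R_{\max}}$, and the two agree on $D_n\cap\partial B_{R_{\max}}$. So $u_n\leq u_{n+1}$ on $D_n\cap B_{R_{\max}}$, where we extend each $u_n$ by zero.

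\emph{Convergence.} Since $0\leq u_n\leq S$ and $g(x,u(x))$ is locally Hölder (as $u\in C^2$ and $f$ is $C^1$), interior Schauder estimates give local $C^{2,\alpha}$ bounds on compact subsets of $D\cap B_{R_{\max}}$. The monotone limit $u_\infty\leq u$ therefore exists, the convergence holds in $C^{2,\alpha}_{\loc}$, and $(L+g(x,u))u_\infty=0$. Near $\partial B_{R_{\max}}\cap D$ we can also use boundary estimates, since $\partial B$ is smooth and the data $u$ there is smooth.

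\emph{Identifying the limit.} It remains to show $u_\infty=u$. Set $w:=u-u_\infty\geq 0$. Then $w$ solves the linear equation in $D\cap B_{R_{\max}}$, vanishes on $D\cap\partial B_{R_{\max}}$, and satisfies $w\leq u$. Since $u\overset{\xi}{=}0$ on $\partial D$, we get $w\overset{\xi}{=}0$ as well. Also $\lambda(-(L+g(x,u));D\cap B_{R_{\max}})>0$ by the strict inequality in \eqref{eq:evalue pve}: the domain is bounded and strictly smaller than the unbounded $D$.

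The refined maximum principle of \cite{BNV1994} on bounded domains with the barrier-sense boundary condition (equivalent to $\overset{\varrho}{=}$ by Remark~\ref{rmk:xi barrier function}) then forces $w\leq 0$, hence $w\equiv 0$. Concretely, for any $\e>0$, the function $w-\e$ is a subsolution-type function whose positive part has compact support in $D\cap B_{R_{\max}}$, up to the sphere where it vanishes. Comparing with a small multiple of the positive eigenfunction on a smooth subdomain containing this support yields $w\leq\e$.

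\emph{Main obstacle.} I expect this last step to be the hard one: applying the maximum principle with no regularity on $\partial D$, and making sure the strict positivity of the eigenvalue on $D\cap B_{R_{\max}}$ is genuinely available. I would rely on \cite[Theorem 1.1 / Corollary 2.2]{BNV1994} exactly as in Theorem~\ref{theo:nonexistence at criticality bded domains}, with $R_{\max}$ fixed.
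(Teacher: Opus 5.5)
Your proof has the same structure as the paper's. The maximum principle of \cite{BNV1994} on $D_n\cap B_{R_{\max}}$, justified by \eqref{eq:evalue pve}, gives $0\le u_n\le u_m\le u\le S$ for $n<m$. Schauder estimates upgrade the monotone limit to $C^{2,\alpha}_{\loc}$ convergence. Uniqueness for the linear problem on $D\cap B_{R_{\max}}$, again via \cite[Theorem 1.1]{BNV1994}, then identifies $u_\infty=u$.

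The one real difference is how you obtain the Dirichlet condition for the limit on $\partial D\cap B_{R_{\max}}$. The paper proves $u_\infty\overset{\xi}{=}0$ through the Feynman--Kac bound \eqref{eq:equation satisfied by u infty}, which controls $u_n$ uniformly in $n$ by exit probabilities. You instead observe that $0\le u-u_\infty\le u$ inherits $u\overset{\xi}{=}0$ directly. Since $u_n\le u$ is already established at that point, this makes the probabilistic estimate superfluous. The paper's argument would also cover a limit not known to lie below $u$, but that generality is not needed here.

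You are also right that the last step really needs $\lambda(-(L+g(x,u(x)));D\cap B_{R_{\max}})>0$, not \eqref{eq:evalue pve} on $D_n\cap B_{R_{\max}}$. This follows from strict monotonicity of the eigenvalue with respect to the domain, comparing with $D\cap B_{2R_{\max}}$ and using that $D$ is connected and unbounded.

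One caution about your ``concrete'' description of the maximum principle: it should not be read as the actual argument, for two reasons.
\begin{enumerate}
\item The set $\{w>\e\}\subseteq\{u>\e\}$ need not be compactly contained in $D$ away from the sphere. At irregular points of $\partial D$ the barrier $\xi$ need not tend to $0$, so $\overset{\xi}{=}$ imposes nothing there.
\item Since $(L+g(x,u(x)))(w-\e)=-\e\,g(x,u(x))$, the function $w-\e$ fails to be a subsolution wherever $g(x,u(x))>0$.
\end{enumerate}
Handling exactly these issues is what the refined maximum principle of \cite{BNV1994} does. Since you ultimately invoke it, as the paper does, your argument stands.
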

\begin{proof}[Proof of Lemma \ref{lem:convergence un to u}]
It follows from the maximum principle \cite[Theorem 1.1]{BNV1994} (which we can apply by \eqref{eq:evalue pve}) that $0<u_n\leq u_m\leq u\leq S$ for $n<m$.

The pointwise limit must be a $C^{2,\alpha}_{\loc}$ limit by Schauder estimates. Therefore $u_{\infty}:=\lim_{n\ra\infty}u_n$ must be a solution of
    \[
    \left\{\begin{array}{ll}
(L+g(x,u(x))u_{\infty}=0, & x\in D\cap B_{R_{\max}},\\
u_{\infty}\overset{\xi}{=}0, & x\in\partial D \cap B_{R_{\max}},\\
u_{\infty}(x)=u(x), & x\in  D \cap \partial B_{R_{\max}},
\end{array}\right.
\]
Whereas the internal stability of Dirichlet boundary conditions formulated via barrier functions is standard \cite{BNV1994}, we establish this here for completeness. We take the diffusion corresponding to $L$, and the corresponding Feynman-Kac representation with zeroth-order term $g(x,u(x))$. We let $\tau_n:=\inf\{t>0:X_{t-}\in \partial (D_n\cap B_{R_{\max}})\}$, and $\tau$ the exit time from $D$. Then from the Feynman-Kac representation we have
\begin{equation}\label{eq:equation satisfied by u infty}
\begin{split}
u_n(x)&\leq Se^{\epsilon\lvert\lvert g(x,u(x))\rvert\rvert_{\infty}}[\Pm_x(\tau_n\leq \epsilon,X_{\tau_n-}\in \partial B_{R_{\max}}\cap D)+\Pm_x(\tau_n>\epsilon)]\\
&\leq Se^{\epsilon\lvert\lvert g(x,u(x))\rvert\rvert_{\infty}}[\Pm_x(\sup_{0\leq t\leq \epsilon}\lvert X_t\rvert\geq R_{\max})+\Pm_x(\tau>\epsilon)].
\end{split}
\end{equation}
Since the right-hand side is independent of $n$, it is an upper bound for $u_{\infty}$. By taking $\xi(x_n)\ra 0$ with $\sup_n\lvert x_n\rvert\leq R_{\max}$, then $\epsilon\ra 0$, we obtain that $u_{\infty}\overset{\xi}{=}0$ on $\partial D\cap B_{R_{\max}}$. 

Finally, $u$ also satisfies the same elliptic equation as $u_{\infty}$, \eqref{eq:equation satisfied by u infty}, whose solutions are unique by the maximum principle \cite[Theorem 1.1]{BNV1994} (using \eqref{eq:evalue pve}), hence $u_{\infty}=u$.
\end{proof}


\subsubsection*{Step 2}

We consider a cut-off function $\chi:\R\to\R$ which is smooth, nonincreasing, and satisfies
	\begin{equation}\label{eq:function chi cut-off}
	\chi(r)=1\quad\text{for }\;r\leq1,\qquad
	\chi(r)=0\quad\text{for }\;r\geq2.
	\end{equation}
	Then, for $R>0$, we call $\chi_R(x):=\chi(|x|/R)$, for $x\in\R^d$.
    
    Let us introduce some notation: we let $A$ denote the matrix field $(a_{ij})$, 
    which induces the scalar product $\langle p,q\rangle_A:=p\.A(x)q$ and the norm
	$|p|_A:=\sqrt{\langle p,p\rangle_A}$. 
    
    Our goal is to prove that 
	\begin{equation}\label{eq:inequality 1 for proof}
    \begin{split}
	2\int_{ D_n}\chi_R u_n\vp_n \langle \nabla\chi_R,\nabla w_n\rangle_A 
	+ \int_{ D_n} \chi_R^2\vp^2_n|\nabla w_n|^2_A\leq 
	-\sigma\int_{ D_n}\chi_R^2 u_n^{2+\ell}.
	\end{split}
    \end{equation}
	We derive the equation satisfied, in the weak sense, by the quotient $w_n=\frac{u_n}{\varphi_n}$, for test functions appropriate to our purposes. It is not completely straightforward because
	we consider test functions which are not in $H^1_0( D)$. 

	\begin{lem}\label{lem:weak}
		Suppose that $0<R<R_{\max}<\infty$, and $ D_n\cap B(0,R)\neq \emptyset$. Then the function 
			\[w_n:=\frac{u_n}{\vp_n}\]
		is continuous on $\bar D_n\cap B_{R_{\max}}$ and satisfies
		\[
			\int_{ D_n}\nabla v\. A(x)\vp^2_n\nabla w_n=
			\int_{ D_n}\big( g(x,u)u_n-c(x)u_n-\l(-\L; D_n)u_n\big)\vp_n v,
			\]	
		for any function $v\in H^1_{loc}( D_n)\cap L^\infty( D_n)$ supported on $B(0,R)$, and such that
		$\vp_n\nabla v\in L^\infty( D)$.
	\end{lem}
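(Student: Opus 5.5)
The plan is to obtain the identity by integrating by parts against the product $\vp_n v$, and to handle the lack of regularity at the boundary by a Hopf-type argument on the smooth domain $D_n$.

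First I establish the equations each factor satisfies. The function $\vp_n$ is smooth up to $\partial D_n$, since $D_n$ is $C^\infty$. It satisfies $L\vp_n+c\vp_n=-\l(-\L;D_n)\vp_n$, it is positive in $D_n$, and by the Hopf lemma $\partial_\nu\vp_n<0$ on $\partial D_n$. The function $u_n$ satisfies $Lu_n+g(x,u)u_n=0$ in $D_n\cap B_{R_{\max}}$ and vanishes on $\partial D_n\cap B_{R_{\max}}$.

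Second, I prove that $w_n$ is continuous up to the boundary. Inside $B(0,R')$ with $R'<R_{\max}$, the boundary $\partial D_n$ is smooth and away from the corner set, so elliptic regularity gives $u_n\in C^{1,\alpha}$ up to $\partial D_n\cap B_{R'}$. Flattening the boundary and writing $u_n(x)=\int_0^1\nabla u_n(\cdot)\cdot(x-x_0)\,dt$, and similarly for $\vp_n$, the quotient $u_n/\vp_n$ extends continuously, with value $\partial_\nu u_n/\partial_\nu\vp_n$ on $\partial D_n$; here the Hopf lemma for $\vp_n$ ensures the denominator does not vanish. Using the same mechanism I also expect $\vp_n\nabla w_n=\nabla u_n-w_n\nabla\vp_n$ to be bounded near $\partial D_n\cap B_R$. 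This is the quantity the integrals actually involve.

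Third, I compute formally:
\[
\nabla\cdot(A\vp_n^2\nabla w_n)=\nabla\cdot(A(\vp_n\nabla u_n-u_n\nabla\vp_n))=\vp_nLu_n-u_nL\vp_n=\big(-g(x,u)+c+\l\big)u_n\vp_n,
\]
where $\l$ stands for $\l(-\L;D_n)$. Testing against $v$ and integrating by parts on $D_n\cap\{\vp_n>\delta\}$, which is smooth for a.e. $\delta$ by Sard's theorem, gives a boundary term
\[
\int_{\{\vp_n=\delta\}}v\,\big(\vp_n\partial_{\nu_A}u_n-u_n\partial_{\nu_A}\vp_n\big).
\]
Both $\vp_n$ and $u_n$ are $O(\delta)$ on $\{\vp_n=\delta\}\cap B_R$, their gradients are bounded there, and $v$ is bounded, so this boundary term tends to $0$ as $\delta\to0$. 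Since the integrands lie in $L^1$ (because $\vp_n\nabla v$, $\vp_n\nabla w_n$ and $v$ are bounded and the support is compact), dominated convergence gives the stated identity. Handling $v\in H^1_{\loc}$ needs only an approximation argument on each $\{\vp_n>\delta\}$, where everything is smooth and bounded away from the boundary.

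I expect the main obstacle to be the uniform bounds near $\partial D_n$. These are the boundedness of $\vp_n\nabla w_n$ and the decay of the boundary term, both of which rely on boundary $C^{1}$ regularity of $u_n$ together with the Hopf lemma for $\vp_n$. I would handle them by local flattening of $\partial D_n\cap \bar B_R$, which is compact and smooth, rather than by any global argument.
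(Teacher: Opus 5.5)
Your proposal is correct and is essentially the paper's argument. Both proofs get continuity of $w_n$ from boundary $C^{1}$ regularity of $u_n,\vp_n$ together with the Hopf lemma, use the same divergence identity, and cut off near $\partial D_n$, with the error vanishing because $\vp_n^2\nabla w_n=\vp_n\nabla u_n-u_n\nabla\vp_n$ is $O(\delta)$ within distance $\delta$ of $\partial D_n$. The paper uses the Lipschitz cut-off $\gamma(d(x)/\delta)$ with the $H^1_0$ weak formulation, which is just an averaged form of your sharp cut-off and flux term on $\{\vp_n>\delta\}$.

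One small fix: Sard's theorem needs more regularity than $\vp_n$ has, since $\vp_n$ is only $C^{1,\alpha}(\bar D_n)$ when $c\in L^\infty$, so it is not smooth as you state. You do not actually need Sard. Hopf and continuity give $|\nabla\vp_n|\ge c_0>0$ near $\partial D_n\cap\bar B_R$. Hence, for every small $\delta$, the set $\{\vp_n=\delta\}\cap\bar B_R$ is a $C^1$ hypersurface, and its area is bounded uniformly in $\delta$. Your vanishing-flux step silently uses this uniform area bound, so you should state it.
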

\begin{rmk}
Note in the above that $B_{R_{\max}}$ is the open ball of radius $R_{\max}$. Indeed, one should not, in general, expect $w_n$ to be continuous up to $\partial B_{R_{\max}}\cap \partial D_n$, due to the differing Dirichlet boundary conditions.
\end{rmk}
	
	\begin{proof}[Proof of Lemma \ref{lem:weak}]
		Firstly, we have that the function $w:=u_n/\vp_n$ is well defined and positive in $ D_n\cap B_{R_{\max}}$, and can be extended by continuity to $\bar D_n\cap B_{R_{\max}}$ because 
		$u,\vp\in C^1(\bar D_n\cap B_{R_{\max}})$, by boundary Schauder estimates \cite[Lemma 6.18, p.111]{Gilbarg1998}, and $\partial_\nu\vp_n>0$ on $\partial D\cap B_{R_{\max}}$, by Hopf's lemma. 
		
		The function 
		\[\vp^2_n \nabla w_n=\vp_n\nabla u_n-u_n\nabla\vp_n\]
		belongs to $H^1_{loc}(\bar D_n\cap B_{R_{\max}})\cap C^0(\bar D_n\cap B_{R_{\max}})$ and vanishes on $\bar D_n\cap B_{R_{\max}}$.
		It follows that $\vp^2_n$ satisfies, in the weak sense in $ D_n\cap B_{R}$,
		\[
		\begin{split}
		\nabla\.(A(x)\vp^2_n\nabla w_n) 
		&=\vp_n\nabla\.( A(x)\nabla u_n)-u_n\nabla\.( A(x)\nabla\vp_n)\\
		&=-\vp_n g(x,u)u_n+\big(c(x)+\l(-\L; D_n)\big)\vp_n u_n.
		\end{split}
		\]		
		Thus the conclusion of the lemma immediately holds if $v\in H^1_0( D_n)$, which proves the lemma
		in the case $ D_n=\R^d$.
		
		In the case $ D_n\neq\R^d$, we introduce a smooth cut-off function $\gamma:\R\to\R$  satisfying
			\[\gamma(r)=0\quad\text{for }\;r\leq0,\qquad
			\gamma(r)=1\quad\text{for }\;r\geq1,\]
		then, for $\delta>0$, we call 
		\[\gamma_\delta(x):=\gamma(d(x/\delta)),
		\]
		where $d(x)$ is the distance function from $\R^d\setminus D_n$. By the regularity assumption on $ D_n$,
		the function $\gamma_\delta$ is Lipschitz continuous provided that $\delta$ is sufficiently small.
		In the following we restrict to such values of $\delta$. In particular, we have that $v\gamma_\delta\in H^1_0( D\cap B(0,R_1))$, whence
		we can apply the above weak equation, namely,
		\[
	\int_{ D_n}\nabla(v\gamma_\delta)\.(A(x)\vp^2_n\nabla w_n) 
		=\int_{ D_n} \big( g(x,u)u_n-c(x)u_n-\l(-\L; D_n)u_n\big)\vp_n v\gamma_\delta.
		\]			
		The right-hand side converges to $\int_{ D_n} \big( g(x,u)u_n-c(x)u_n-\l(\L; D_n)u_n\big)\vp v$
		as $\delta\to0$, by dominated convergence.
		The left-hand side is 
		\[
		\int_{ D_n} \gamma_\delta\vp_n\nabla v\.(A(x)\vp_n\nabla w_n) +
		\int_{ D_n} v \nabla \gamma_\delta\.(A(x)\vp^2_n\nabla w_n).
		\]	
		The function $\vp_n\nabla v$ is bounded by hypothesis, while
		\[
		\vp_n\nabla w_n=\nabla u_n-u_n\nabla\vp_n\in C^0(\bar D_n\cap B_{R_{\max}}).
		\]	
		Thus the first term converges to $\int_{ D_n} \nabla v\.(A(x)\vp^2_n\nabla w_n)$ as $\delta\to0$,
		again by dominated convergence. The second one is bounded by
		\[
		\|\nabla \gamma_\delta\|_{L^1( D^\delta\cap B_R)}
		\|v\|_{L^\infty( D_n)}\|\vp^2_n\nabla w_n\|_{L^\infty( D^\delta\cap B(0,R))},
		\]
		where $ D^\delta:=\{x\in D_n\ :\ d(x)<\delta\}$.			
		We observe that
		\[
		\|\nabla \gamma_\delta\|_{L^1( D^\delta\cap B_R)}
		\leq\frac1\delta\|\gamma'\|_\infty | D^\delta\cap B_R|,
		\]	
		which remains bounded as $\delta\to0$, thanks to the regularity of $\partial D_n$. Finally,
		\[\|\vp^2_n\nabla w_n\|_{L^\infty( D^\delta\cap B_R)}\to0\qquad\text{as }\;
		\delta\to0.\]
		because $\vp_n\nabla w_n$ is continuous on $\bar D \cap B(0,R_{\max})$ and $\vp_n$ vanishes on $\partial D_n$. Thus we conclude Lemma \ref{lem:weak}.
%
%
	\end{proof}

	We apply Lemma~\ref{lem:weak} with $w_n:=u_n/\vp_n$ and $v:=\chi_R^2 w_n$, for $R\leq R_{\max}/2$, which is possible because $v$ is supported on $B_{2R}\subset\subset B_{R_{\max}}$ and
	\[\vp_n\nabla v=2\chi_Ru_n\nabla\chi_R+\chi_R^2(\nabla u_n-w_n\nabla\vp_n)\ \in L^\infty( D).\]
Here we don't denote the dependence of $v$ on $n$ as we will not need to keep track of this once we come to vary $n$.

We get
		\[
			\int_{ D_n}\nabla v\. A(x)\vp^2_n\nabla w_n=
			\int_{ D_n}\big( g(x,u)u_n-c(x)u_n-\l(-\L; D_n)u_n\big)\vp_n v,
			\]		

	We recall that the matrix field $A$ induces the scalar product $\langle p,q\rangle_A:=p\.A(x)q$ and the norm
	$|p|_A:=\sqrt{\langle p,p\rangle_A}$. Then we have
    \[
    \begin{split}
\nabla v\cdot A(x)\vp_n^2\nabla w_n&=w_n\varphi_n^2\nabla(\chi_R^2)\cdot A(x)\nabla w_n+\chi_R^2\nabla w_n\cdot A(x)\nabla w_n\\
&=2\chi_Ru_n\varphi_n\langle \nabla \chi_R,\nabla w_n\rangle_A+\chi_R^2\varphi_n^2\lvert\nabla w_n\rvert_A^2.
\end{split}
    \]
    We then rewrite the above, using $g(x,s)-c(x)\leq -\sigma s^{\ell}$, to get the inequality 
	\[
	\begin{split}
	&2\int_{ D_n}\chi_R u_n\vp_n \langle \nabla\chi_R,\nabla w_n\rangle_A 
	+ \int_{ D_n} \chi_R^2\vp_n^2|\nabla w_n|^2_A \\ &\quad\leq 
	-\sigma\int_{ D_n}\chi_R^2 u^{\ell}u_n^2-\underbrace{\lambda(-\mathcal{L}; D_n)}_{\geq \lambda(-\mathcal{L}; D)\geq 0}\int_{ D}\underbrace{\chi_R^2u_n^2}_{=u_n\vp_nv_n},\\
	&\quad \leq -\sigma\int_{ D_n}\chi_R^2 u^{\ell}u_n^2\leq -\sigma \int_{ D_n}\chi_R^2u_n^{2+\ell},
	\end{split}
	\]
    where, for the last inequality, we have used $0\leq u_n\leq u$.
Whence we have established~\eqref{eq:inequality 1 for proof}. 
We highlight that this is where we use $\lambda(-\L)\geq 0$ in our proof.

\subsubsection*{Step 3}	

Using the Cauchy-Schwarz inequality and the fact that $\nabla\chi_R\equiv0$ on $B_R$, we get
	\[
	\begin{split}
		\int_{ D_n} \chi_R^2\vp_n^2|\nabla w_n|^2_A &\leq 
	2\int_{ D_n}\chi_R u_n\vp_n |\nabla\chi_R|_A|\nabla w_n|_A 
	-\sigma\int_{ D_n}\chi_R^2 u_n^{2+\ell}\\
	& \leq 
	2\left(\int_{ D_n\setminus B_R}\chi_R^2 \vp_n^2 |\nabla w_n|_A^2\right)^{\frac12} 
	\left(\int_{ D_n} u_n^2 |\nabla\chi_R|_A^2\right)^{\frac12} 
	-\sigma\int_{ D_n\cap B_R}u_n^{2+\ell}
	\end{split}
	\]

	We now rewrite this inequality using the following notation:
	\[H_R:=\left(\int_{ D_n} \chi_R^2\vp_n^2|\nabla w_n|^2_A\right)^{\frac12},\qquad
	U_R:=\int_{ D_n\cap B_{R}}u_n^{2+\ell}.
	\]
	We do not denote the dependence of $H_R$ and $U_R$ on $n$ to keep the notation cleaner, but one should bear in mind that both of these depend on $n$.

	After observing that 
	\[\int_{ D_n\setminus B_R}\chi_R^2 \vp_n^2 |\nabla w_n|_A^2=
	\int_{ D_n}\chi_R^2 \vp_n^2 |\nabla w_n|_A^2-\int_{ D_n\cap B_R}\vp_n^2 |\nabla w_n|_A^2
	\leq H_R^2-H_{R/2}^2,
	\]
	we get
	\Fi{HR2}
		H_R^2 \leq 
		2\big(H_R^2-H_{R/2}^2\big)^{\frac12}
		\left(\int_{ D_n} u_n^2 |\nabla\chi_R|_A^2\right)^{\frac12} 
		-\sigma U_R.
	\Ff
	We then estimate the above integral by noticing that 
	$|\nabla\chi_R|_A\leq C R^{-1}$, for some $C>0$ only depending on $d,\chi$ and the ellipticity constants of $A$,
    \[
    \int_{ D_n} u_n^2 |\nabla\chi_R|_A^2
	\leq C^2R^{-2}\lvert\lvert u_n\rvert\rvert^2_{L^2( D_n\cap B_{2R})}.
    \]
    We now recall that for a finite measure space $(X,\mu)$ and $1\leq p<q\leq \infty$, one has $\lvert\lvert \cdot\rvert\rvert_{L^p(\mu)}\leq (\mu(X))^{\frac{1}{p}-\frac{1}{q}}\lvert\lvert \cdot\rvert\rvert_{L^q(\mu)}$. Using this with $p=2$ and $q=2+\ell$,
	\Fi{Holder}
	\begin{split}
    \int_{ D_n} u_n^2 |\nabla\chi_R|_A^2
	\leq C^2R^{-2}\lvert\lvert u_n\rvert\rvert_{L^{2+\ell}( D_n\cap B_{2R})}^2\lvert  D_n\cap B_{2R}\rvert^{\frac{\ell}{2+\ell}}\\
    = C^2R^{-2}\left(\int_{ D_n\cap B_{2R}} u_n^{2+\ell}\right)^{\frac{2}{2+\ell}}| D_n\cap B_{2R}|^{\frac{\ell}{2+\ell}}.
    \end{split}
    \Ff

Inputting this into \eqref{HR2}, we get
	\Fi{HR3}
		H_R^2 \leq 
		2CR^{-1}\lvert  D_n\cap B_{2R}\rvert^{\frac{\ell}{2(2+\ell)}}\big(H_R^2-H_{R/2}^2\big)^{\frac12}
		U_{2R}^{\frac{1}{2+\ell}}
		-\sigma U_R.
	\Ff

\subsubsection*{Step 4}

	We now need the following technical lemma.
	
	\begin{lem}\label{lem:technical lemma g}
		Let $g\in L^\infty(\R^d)$ be a non-negative, not a.e.~null function. Then
		\[\liminf_{R\to+\infty}\,\frac{\int_{B_{2R}}g}{\int_{B_{R}}g}\leq 2^d.\]	
	\end{lem}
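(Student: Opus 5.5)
We argue by contradiction, via the polynomial growth bound that the $L^\infty$ assumption forces on $G(R):=\int_{B_R}g$. Since $g\geq0$ is not a.e.\ null, there is $R_0$ with $G(R)>0$ for all $R\geq R_0$. The function $G$ is non-decreasing in $R$, and since $g\in L^\infty$ it satisfies
\[
G(R)\leq \|g\|_{L^\infty}\,|B_1|\,R^d .
\]

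Suppose the claim fails. Then there are $\delta>0$ and $R_1\geq R_0$ such that $G(2R)\geq 2^{d}(1+\delta)G(R)$ for every $R\geq R_1$. Iterating along the dyadic sequence $R_k:=2^kR_1$ gives
\[
G(2^kR_1)\geq \big(2^d(1+\delta)\big)^k G(R_1)\qquad\text{for all }k\in\N .
\]
Comparing with the upper bound at $R=2^kR_1$, we obtain
\[
(1+\delta)^k\,2^{dk}\,G(R_1)\leq \|g\|_\infty |B_1| R_1^d\, 2^{dk},
\]
that is,
\[
(1+\delta)^k\leq \frac{\|g\|_\infty|B_1|R_1^d}{G(R_1)}\qquad\text{for all }k .
\]
The right-hand side is a finite constant independent of $k$, because $G(R_1)>0$. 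Letting $k\to\infty$ makes the left-hand side diverge, which is a contradiction. Hence for every $\delta>0$ the inequality $G(2R)<2^d(1+\delta)G(R)$ holds for arbitrarily large $R$, so $\liminf_{R\to\infty}G(2R)/G(R)\leq 2^d$.

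There is no real obstacle here. The only care needed is to ensure that $G(R)>0$ for large $R$, so that the ratio is defined and the constant above is finite; this follows from $g$ not being a.e.\ null. The statement is also robust to how it will be applied: $\R^d$ may be replaced by any set whose volume growth is $O(R^d)$ (for instance by taking $g=u_n^{2+\ell}\1_{D_n}$, extended by zero), with the same argument. Since the conclusion is only a $\liminf$, in the later steps one should extract a sequence $R_k\to\infty$ along which $U_{2R_k}\leq(2^d+1)U_{R_k}$; that sequence is what Step 4 needs to control the term $U_{2R}^{1/(2+\ell)}$ in \eqref{HR3} by $U_R$.
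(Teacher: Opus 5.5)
Your proof is correct and is the paper's argument: assuming the ratio eventually exceeds some $M>2^d$ (your $2^d(1+\delta)$), dyadic iteration forces growth like $M^k\int_{B_{R_1}}g$, which contradicts the bound $\int_{B_{2^kR_1}}g\le \|g\|_{L^\infty}|B_1|(2^kR_1)^d$ coming from $g\in L^\infty$. You are also slightly more careful than the paper in checking explicitly that $\int_{B_R}g>0$ for large $R$, so that the ratio and the final constant are well defined.
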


	\begin{proof}[Proof of Lemma \ref{lem:technical lemma g}]
		We argue by contradiction. Thus we assume there exists $M>2^d$ and $R_0>0$ such that 
		\[	\int_{B_{2R}}g\geq M\int_{B_{R}}g,\]
		for all $R\geq R_0$. We deduce
		\[
		\int_{B_{2^kR_0}}g\geq M^k\int_{B_{R_0}}g,\]
		for all $k\in \N$. However, since $\lvert\lvert g\rvert\rvert_{L^{\infty}}<\infty$, there exists $C<\infty$ such that
		\[
		\int_{B_{2^kR_0}}g\leq \lvert\lvert g\rvert\rvert_{L^{\infty}}\lvert B_{2^kR_0}\rvert\leq C (2^kR_0)^d=CR_0^d2^{dk}.
		\]
		This is impossible because ${M>2^N}$.
	\end{proof}
	
We now fix arbitrary 
\[
M>2^d,\quad R_{\min}<\infty \quad\text{and}\quad K\in \N.
\]
Later we will vary $R_{\min}$ and $K$, hence we will keep track of how the constants we obtain depend upon them, but $M>2^d$ is fixed and unchanging, so we will ignore the dependence of constants we obtain on $M$. 

We apply the above lemma to the function  $\1_D u^{2+\ell}$ to find a sequence $(R_k)_{1\leq k\leq K}$ in $(R_{\min},\infty)$ such that 
	\[
	\int_{ D\cap B_{2R_k}}u^{2+\ell}< M\int_{ D\cap B_{R_k}}u^{2+\ell},
	\]
and $R_{k+1}>2R_k$, for all $1\leq k\leq K$. We now fix
\[
R_{\max}=10 R_{K},
\]
which thereby fixes $(u_n)_{n\geq 1}$.

Now by the monotone convergence theorem, we can choose $n_0<\infty$ such 
\[
\int_{ D_n\cap B_{2R_k}}u_n^{2+\ell}<M\int_{ D_n}u_n^{2+\ell}
\]
for all $1\leq k\leq K$ and all $n\geq n_0$. Then \eqref{HR3} implies
\begin{equation}\label{eq:HR4}
H_R^2\leq \Delta_R\big(H_R^2-H_{R/2}^2\big)^{\frac12}
		U_{R}^{\frac{1}{2+\ell}}-\sigma U_R,
\end{equation}
for all $R\in\{R_1,\ldots,R_K\}$ and $n\geq n_0$, where we define 
\[
\Delta_R:=2CM^{\frac{1}{2+\ell}}R^{-1}| D\cap B_{2R}|^{\frac{\ell}{2(2+\ell)}}.
\] 
Here $C>0$ is the constant in \eqref{HR3}, which depends only upon $d$, $\chi$ and the ellipticity constants of $A$.

\subsubsection*{Step 5}

    Now for $a>0$, $b,c\in \Rm$, there exists a solution of the quadratic inequality
    \begin{equation}\label{eq:quadratic ineq}
ax^2+bx+c\leq 0
    \end{equation}
    if and only if $b^2\geq 4ac$, and moreover any solution is bounded by the largest real root of the corresponding quadratic equation, so is at most
    \[
\frac{-b+\sqrt{b^2-4ac}}{2a}.
\]

We see from \eqref{eq:HR4} that
\[
H_R^2- U_{R}^{\frac{1}{2+\ell}}\Delta_R H_R+\sigma U_R\leq 0,
		\]
		hence $H_{R_k}$ satisfies \eqref{eq:quadratic ineq} with $a=1$, $b=-\Delta_{R_k}U_{R_k}^{\frac{1}{2+\ell}}$ and $c=\sigma U_{R_k}$. Thus
\[
H_{R_k}\leq \Delta_{R_k}U_{R_k}^{\frac{1}{2+\ell}}+\sqrt{\Delta_{R_k}^2U_{R_k}^{\frac{2}{2+\ell}}-\sigma U_{R_k}}
\]
and
\[
\Delta_{R_k}^2U_{R_k}^{\frac{2}{2+\ell}}-\sigma U_{R_k}\geq 0.
\]
We rewrite the latter inequality as
\[
U_{R_k}\leq \sigma^{-\frac{2+\ell}{\ell}}\Delta_{R_k}^{\frac{2(2+\ell)}{\ell}}.
\]

Now we have
\[
\Delta_R\leq C'M^{\frac{1}{2+\ell}}R^{\frac{d\ell}{2(2+\ell)}-1},
\]
for some constant $C'<\infty$ depending only upon $d$, $\chi$ and the ellipticity constants of $A$. Therefore 
\[
\frac{d\ell}{4+2\ell}-1\leq 0,
\]
i.e.
\begin{equation}\label{eq:appearance of dim constraint}
d\leq 2+\frac{4}{\ell},
\end{equation}
ensures that $\Delta_R$ is bounded, uniformly in $K$, $R_{\min}$ and $n$. We write $\bar \Delta$ for some uniform upper bound of $\Delta_R$.

Therefore there exists $C'<\infty$, which doesn't depend on $K$, $R_{\min}$ or $n\geq n_0$, such that
\[
H_{R_{k}},U_{R_{k}}\leq C'
\]
for all $1\leq k\leq K$ and $n\geq n_0$. 

\subsubsection*{Step 6}

But
\[
 \sum_{k=1}^{K-1}(H_{2R_K}-H_{R_K})\leq H_{R_{K}},
\]
hence there exists $k_0\in \{1,\ldots,K-1\}$ such that
\[
H_{2R_{k_0}}-H_{R_{k_0}}\leq \frac{C'}{K-1}.
\]
Therefore
\[
H_{2R_{k_0}}^2-H_{R_{k_0}}^2= (H_{2R_{k_0}}+H_{R_{k_0}})(H_{2R_{k_0}}-H_{R_{k_0}})\leq \frac{2C'^2}{K-1}.
\]

Inputting these back into \eqref{HR3}, we have
\[
		H_{R_{\min}}^2\leq H_{R_{k_0}}^2 \leq 
		\bar\Delta_{R_{k_0}}\big(\frac{2C'^2}{K-1}\big)^{\frac12}
		C^{\frac{1}{2+\ell}}.
		\]
This bound is uniform in $K$, $R_{\min}$ and $n\geq n_0$. 

\subsubsection*{Step 7}

Using Harnack's inequality and interior Schauder estimates, $\phi_n$ converges in $C^{2,\alpha}_{\loc}$ (on an appropriately chosen subsequence) to $\phi>0$, a solution of 
\[
\L\phi=0,
\]
on $ D$, with $\phi(0)=1$. Note that we make no claim about the boundary behaviour of $\phi$. We recall from Lemma \ref{lem:convergence un to u} that similarly $u_n\ra u$ in $C^{2,\alpha}_{\loc}$, along the whole sequence. 

Then for any $V\subset\subset B_{R_{\min}}\cap  D$, we have (by Harnack inequality and interior regularity)
\[
\left(\int_{V} \vp^2|\nabla (\frac{u}{\varphi})|^2_A\right)^{\frac12}\leq \limsup_{n\ra\infty}H^2_{R_{\min}}\leq \bar\Delta \big(\frac{2C'^2}{K-1}\big)^{\frac12}
		C^{\frac{1}{2+\ell}}.
\]

We have established that for all $\epsilon>0$ there exists $\L$-harmonic $\phi_{\epsilon}$ on $ D$, with $\phi_{\epsilon}(0)=1$, such that
\[
\int_{U} \vp^2_{\epsilon}|\nabla (\frac{u}{\varphi_{\epsilon}})|^2_A<\epsilon.
\]
Taking a $C^{2,\alpha}_{\loc}$ subsequential limit as $\epsilon\ra 0$ (again using interior Schauder estimates), we obtain another $\L$-harmonic function $\phi$, with $\phi(0)=1$, such that
\[
\int_{U} \vp^2|\nabla (\frac{u}{\varphi})|^2_A=0.
\]
But this implies that $\frac{u}{\varphi}$ is constant on $U$, hence $u$ is $\L$-harmonic. This is a contradiction, concluding the proof of Theorem \ref{thm:Liouv6}.\qed

\subsection{Proof of Theorem \ref{theo:uniqueness proof}}\label{sec:uniqueness}

We prove Theorem \ref{theo:uniqueness proof} by combining Theorem \ref{thm:Liouv6} with the machinery of Section \ref{sec:uniq from non-exis} (starting on Page \pageref{sec:uniq from non-exis}). 

We suppose for contradiction that there exists more than one solution of \eqref{s-a proof}. 
Then by Proposition \ref{prop:maximal solution} there exist
$0<u< \ol u\leq S$ solutions of \eqref{s-a}, with $\ol u$ being the maximal solution. 
The difference $w:=\ol u-u$ satisfies \eqref{eq:w-h} with $h$ 
defined by \eqref{h}. 
Proposition \ref{pro:nonex-uniqueness} implies that 
the function $h$ fulfils the conditions 
in Assumption \ref{assum:standing}, and also \eqref{proof f<f'-}
with $c(x)=f(x,u(x))/u(x)$. Moreover we see that
$\lambda(-L+c)\geq 0$ just by taking $u$ in the definition.
We obtain a contradiction from Theorem \ref{thm:Liouv6}.\qed

\section{Non-uniqueness in dimension $d\geq 2+\frac{4}{\ell}$}\label{section:infinite}

We consider the equation
\begin{equation}\label{eq:Hardy-critical}
	\Delta u + V(x)u = u^{1+\ell}\quad \text{ in }\;\Rm^d,
\end{equation}
with $V\in L^\infty(\R^d)$ being an asymptotic Hardy potential, that is,
\Fi{Hardy_potential}
\exists\e>0,\qquad
V(x) = \frac{c}{|x|^2} + o\left(|x|^{-2-\e}\right) \quad \text{ as }\; |x| \to \infty.
\Ff
Let us stress that we do not assume $V(x)$ to be radially symmetric, but only to have an asymptotic radial symmetry.

We impose $c$ to be smaller than the  critical Hardy constant 
\[H_d:=\left(\frac{d-2}{2}\right)^2.\]

\begin{theo}\label{thm:Hardy_spercritical}
	Let $\ell>0$, $d> 2+\frac{4}{\ell}$, and let
	$V$ be an asymptotic Hardy potential satisfying \eqref{Hardy_potential} with
	\Fi{cbounds}
	H_d-\Big(\sqrt{H_d}-\frac2{\ell}\Big)^2<c<H_d.
	\Ff
	Then the equation  \eqref{eq:Hardy-critical} admits infinitely many positive bounded solutions. 
	More precisely:
	\begin{enumerate}[$(i)$]
		\item \label{enum:evalue<=0 and both possible}
		$\lambda(-\L)\leq 0$, where $\L=\Delta +V(x)$ is the linearised operator around $0$, and both
		$\lambda(-\L)=0$ and $\lambda(-\L)<0$ can occur for suitable potentials $V$, 
        with the former always holding when $V\leq H_d/\lvert x\rvert^2$ everywhere;
		\item \label{enum:maximal soln O(x-2l)}
		the maximal bounded solution satisfies
		\[u_M(x)=\Theta(|x|^{-\frac 2\ell}) \quad \text{ as }\; |x| \to \infty;\]
		\item\label{enum:family uk}
		there is a family $(u_\kappa)_{\kappa>0}$ of positive solutions satisfying
		\[u_\kappa(x)=(\kappa+o(1))|x|^{a^*} \quad \text{ as }\; |x| \to \infty,\]
		with $a^*=-\sqrt{H_d}+ \sqrt{H_d-c}$;
		\item\label{enum:minimal soln estimate}
		if $\lambda(-\L)<0$ then the minimal solution satisfies
		\[u_m(x)=\Theta(|x|^{a_*}) \quad \text{ as }\; |x| \to \infty,\]
		with $a_*=-\sqrt{H_d}- \sqrt{H_d-c}$.
	\end{enumerate}
\end{theo}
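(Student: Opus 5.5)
The construction is driven by the radial power-law behaviour of $\Delta+c|x|^{-2}$ near infinity. First I would record the two exact computations that explain the hypotheses.

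\emph{Nonlinear power.} For $u=A r^{-2/\ell}$ with $r=|x|$ one has
\[
\Delta u+\frac{c}{r^2}u=\Big(c-\tfrac2\ell\big(d-2-\tfrac2\ell\big)\Big)A r^{-2/\ell-2}.
\]
Since $r^{-2/\ell-2}=r^{-2/\ell(1+\ell)}$, this equals $u^{1+\ell}$ exactly when $A^\ell=c-H_d+(\sqrt{H_d}-\tfrac2\ell)^2$, which is positive precisely by the left inequality in \eqref{cbounds}.

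\emph{Linear powers.} The homogeneous solutions $r^a$ of $\Delta u+cr^{-2}u=0$ have $a^2+(d-2)a+c=0$, i.e.\ $a\in\{a^*,a_*\}$, which are real because $c<H_d$. Moreover \eqref{cbounds} together with $\sqrt{H_d}>2/\ell$ (which is $d>2+4/\ell$) gives $a_*<a^*<-2/\ell$. In particular $a^*\ell<-2$, so along $u\approx\kappa r^{a^*}$ the nonlinearity $u^{1+\ell}=O(r^{a^*-2-\delta})$ with $\delta=-2-a^*\ell>0$ is a lower-order perturbation. The error $V-cr^{-2}=o(r^{-2-\e})$ is likewise a lower-order perturbation.

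For (ii), the plan is to build on $\{|x|>R_0\}$ a supersolution $\bar A r^{-2/\ell}$ with $\bar A$ slightly larger than $A$, and a subsolution $\underline A r^{-2/\ell}$ with $\underline A$ slightly smaller. The $o(r^{-2-\e})$ error is absorbed because the coefficient identity above is strict once $\bar A\ne A$. I would glue these inside $B_{R_0}$ to a large constant (super) and to a compactly modified cutoff (sub). The maximal solution from Proposition \ref{prop:maximal solution}(i) is then squeezed between them, giving $u_M=\Theta(r^{-2/\ell})$; here the key point is that every bounded solution lies below the supersolution, by comparison on exterior domains combined with the ODE bound at $S$.

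For (iii), fix $\kappa>0$ and take
\[
\underline w=\big(\kappa r^{a^*}-Kr^{a^*-\delta'}\big)_+,\qquad \bar w=\kappa r^{a^*}+Kr^{a^*-\delta'},
\]
for small $\delta'>0$ and large $K$, on $|x|>R_0$. Because $a^*-\delta'$ is not an indicial root, $(\Delta+cr^{-2})r^{a^*-\delta'}$ has a definite sign times $r^{a^*-\delta'-2}$, and this dominates both the nonlinear and the potential errors. I would solve the equation on annuli $B_{R_j}\setminus B_{R_0}$ (and on $B_{R_0}$, capped by $u_M$ and a suitable positive inner subsolution) by monotone iteration between these barriers, then pass $R_j\to\infty$ using local estimates. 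Distinct $\kappa$ give distinct asymptotics, which yields infinitely many solutions. The matching across $|x|=R_0$ is the main obstacle: the barriers must be ordered globally. My first attempt would be to take $\min(\bar w,u_M)$ as the supersolution, which is legitimate since $a^*<-2/\ell$ makes $\bar w<u_M$ at infinity. For the subsolution I would take $\max(\underline w,\ \epsilon\times$ a compactly supported Dirichlet eigenfunction$)$ when $\lambda<0$; otherwise I would solve an exterior Dirichlet problem with zero data on $\partial B_{R_0}$ and check that the limit is positive and keeps leading coefficient $\kappa$ via the Kelvin-type estimate $u-\kappa r^{a^*}=O(r^{a^*-\delta'})$.

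For (i), $\lambda(-\L)\le0$ follows from Hardy's inequality being sharp only at $H_d$: testing the Rayleigh quotient with cut-offs of $r^{-(d-2)/2}$ shows that the infimum of $\int|\nabla\phi|^2-V\phi^2$ over normalised $\phi$ tends to $0^-$ or $0$. If $V\le H_d r^{-2}$, then a regularised $r^{-(d-2)/2}$ (smoothed near $0$) is a positive supersolution of $-\L\phi\ge0$, giving $\lambda=0$. Enlarging $V$ on a compact set so that a Dirichlet eigenvalue on a ball becomes negative gives $\lambda<0$. For (iv), when $\lambda<0$ the minimal solution from Proposition \ref{prop:maximal solution}(ii) is obtained from a compactly supported subsolution, so I would compare it on exterior domains with $C r^{a_*}$ from above and with $\epsilon r^{a_*}$-type subsolutions from below. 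This relies on the fact that the solution attaining a boundary value on $\partial B_{R_0}$ and minimal at infinity decays at the faster indicial rate $a_*$, while any solution $\ge$ the minimal one that is not minimal must carry an $r^{a^*}$ component.
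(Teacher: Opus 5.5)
\textbf{There is a genuine gap in the upper bound of (ii).} You call the bound $u_M\lesssim|x|^{-2/\ell}$ the key point, and ``comparison on exterior domains combined with the ODE bound at $S$'' does not deliver it. There are two separate problems.

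First, $\bar A r^{-2/\ell}$ is a supersolution only where $|x|^2V(x)\le c+\bar A^\ell-A^\ell$. With $\bar A$ close to $A$, this holds only for $|x|\ge R_0$ with $R_0$ large. There $\bar A r^{-2/\ell}\ll S$, so $u\le S$ gives no ordering on $\partial B_{R_0}$. Gluing to a large constant also fails, because $V$ is arbitrary on compact sets.

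Second, and more fundamentally, $\Delta+V$ has a positive zeroth-order term. So there is no off-the-shelf comparison principle on unbounded domains, and the ordering at infinity is exactly what has to be proved.

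The paper uses three ingredients that your plan lacks:
\begin{itemize}
\item[(a)] $u_M(x)\to0$ as $|x|\to\infty$. Limits of translates $u_M(x_n+\cdot)$ are bounded non-negative entire solutions of $\Delta y=y^{1+\ell}$, hence zero.
\item[(b)] A non-sharp barrier $w_\delta=Cr^{-2/\ell}+\delta$ with $C^\ell=\tilde D\ge\sup_{\mathbb{R}^d}|x|^2V$, so that it is valid on all of $\mathbb{R}^d\setminus\{0\}$. It is a strict supersolution for every $\delta>0$, because $(Cr^{-2/\ell}+\delta)^{1+\ell}\ge C^{1+\ell}r^{-2/\ell-2}+(1+\ell)C^\ell\delta r^{-2}$. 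By (a), it exceeds $u_M$ outside a compact set.
\item[(c)] On a component $O$ of $\{u_M>w_\delta\}$, which is now bounded, $z=u_M-w_\delta$ satisfies $\Delta z+qz\ge0$. Here $q=V-\frac{u_M^{1+\ell}-w_\delta^{1+\ell}}{u_M-w_\delta}<V-u_M^\ell$ by strict convexity. Hence $\lambda(-(\Delta+q);O)>\lambda(-(\Delta+V-u_M^\ell);\mathbb{R}^d)\ge0$, which follows by testing with $u_M$ itself. The Berestycki--Nirenberg--Varadhan maximum principle then gives a contradiction, and one lets $\delta\to0$.
\end{itemize}
Because the barrier constant is large, this yields only $\Theta$, not the sharp constant $A$.

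\textbf{The lower bound in (iv) has the same defect.} An ``$\epsilon r^{a_*}$-type'' subsolution can be compared with $u_m$ on an exterior domain only if you already control both functions at infinity. The paper instead takes $v=\eta(r^{a_*}+r^a)-\theta r^b$ with $a<a_*<b<a^*$, $a>a_*-\varepsilon$ and $a-2>a_*(1+\ell)$. This is a subsolution on $\bar r\le|x|\le r_{\eta,\theta}$ and vanishes at the finite radius $r_{\eta,\theta}$. One compares on this bounded annulus by the same secant argument, then lets $\theta\to0$ so that $r_{\eta,\theta}\to\infty$.

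For the upper bound in (iv), no exterior comparison is needed. Since $u_m$ is the monotone limit of the flow from $\epsilon\varphi_n$, it lies below every positive global generalised supersolution. So gluing $C(r^{a_*}-r^a)$, with $a\in(a_*-\varepsilon,a_*)$, to a constant suffices. Your closing heuristic about non-minimal solutions ``carrying an $r^{a^*}$ component'' is unproved and not needed.

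\textbf{The rest is fine, with two simplifications.}
\begin{itemize}
\item[(iii)] Your barriers are the paper's, with $a=a^*-\delta'$. The matching you worry about does not arise. For $K$ large, $\underline w$ vanishes on $R_0<|x|<(K/\kappa)^{1/\delta'}$, so its zero extension is already a global subsolution, and $\min(\bar w,S')$ is a global supersolution. No eigenfunction, exterior Dirichlet problem or Kelvin-type estimate is required.
\item[(i)] For $\varepsilon$ small, your regularised $(|x|^2+\varepsilon^2)^{-(d-2)/4}$ is a positive supersolution when $V\le\min(\|V\|_\infty,H_d|x|^{-2})$. This is a legitimate alternative to the paper's Rayleigh quotient with Hardy's inequality, and it gives $\lambda(-\mathcal{L})\ge0$ directly from the definition. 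The bound $\lambda(-\mathcal{L})\le0$ follows most simply from $V\ge0$ outside a large ball.
\end{itemize}
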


\begin{rmk}
	Some comments on Theorems \ref{thm:Hardy_spercritical} are in order.
	\begin{enumerate}[$(a)$]
	\item
	 The condition $c<H_d$ in \eqref{cbounds} guarantess that the quantities $a_*,a^*$ are real and distinct.
	On the other hand the dimensional restriction $d> 2+\frac{4}{\ell}$ can be rewritten as $H_d>(2/\ell)^2$, which leaves room for $c$ to fulfil~\eqref{cbounds}. 
	Then, these conditions imply that
	\[a_*<a^*<-\frac2\ell.\]
	\item The appearance of the Hardy constant comes from Hardy's inequality, see \eqref{eq:use of Hardys inequality}.	
	\item When $d\leq 2+\frac{4}{\ell}$ our 
	construction  breaks down, and indeed we know from Theorems~\ref{theo:nonexistence at criticality} and \ref{theo:uniqueness} that in that case the equation \eqref{eq:Hardy-critical}
	does not admit a positive solution if $\lambda(-\L)=0$, and that it admits a unique positive solution if $\lambda(-\L)<0$.
	So a sharp dimensional change occurs when $d$ passes the threshold $2+\frac{4}{\ell}$ ($=6$ if $\ell=1$).
	\item	The one-parameter family of solutions to statement \eqref{enum:family uk} is a ghost of the linear space of the
	ground state of the linearised operator $L$ at infinity.
    \item In Section \ref{section:homogeneous counterexample}, we will use Theorem \ref{thm:Hardy_spercritical} to construct a locally (but not uniformly) analytic domain on which we have non-uniqueness for the classical homogeneous nonlinear term $u-u^2$ in $d\geq 7$, providing a counterexample to a conjecture of Berestycki and Graham in $d\geq 7$ \cite{Berestycki2025,Berestycki2026}.
	\end{enumerate}
\end{rmk}
When $\ell=1$, the lower bound on $c$ in \eqref{cbounds} is $H_d-(\sqrt{H_d}-2/\ell)^2=2d-8$. The following tells us this is sharp, and we have a double non-trivial phase transition.
\begin{theo}\label{theo:bound on solns l=1}
    Let $\ell=1$, $d> 6$, and let
	$V\in L^{\infty}(\Rm^d)$ be non-negative (not necessarily an asymptotic Hardy potential). Then the following hold:
    \begin{enumerate}
        \item \label{enum:V at most 2d-8}If $V\leq (2d-8)\lvert x\rvert^{-2}$ for all $x$ large enough, then there is at most one positive and bounded solution to \eqref{eq:Hardy-critical}, and such a solution exists if and only if $\lambda(-\L)<0$, where $\L:=\Delta+V$.
        \item \label{enum:V above Hd}If $V=\Theta(\lvert x\rvert^{-2})$ and, for some $c>H_d$, $V\geq c\lvert x\rvert^{2}$ for all $\lvert x\rvert$ large enough, then there is precisely one positive and bounded solution.
        \end{enumerate}
\end{theo}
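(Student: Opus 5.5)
The plan is to treat the two parts separately. Part~\ref{enum:V at most 2d-8} goes through a refined version of the proof of Theorem~\ref{thm:Liouv6}, in which the Hölder step \eqref{Holder} is replaced by a pointwise decay estimate on solutions. Part~\ref{enum:V above Hd} goes through a scale-invariant sliding argument.

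\emph{Part~\ref{enum:V at most 2d-8}.} First I derive an a priori bound $u\le C|x|^{-2}$ for every positive bounded solution $u$. On the ball $B(x,|x|/2)$ we have $V\le C|x|^{-2}$ and $\Delta u\ge u^2-Vu$. Rescaling $v(y)=|x|^2u(x+|x|y)$ gives a Keller--Osserman-type inequality on the unit ball with bounded potential, which bounds $v(0)$ uniformly.

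Next I compare $u$ with the linear operator $\Delta+(2d-8)|x|^{-2}$. Its radial powers $|x|^a$ have indicial equation $a^2+(d-2)a+2d-8=(a+2)(a+d-4)=0$, with roots $-2$ and $-(d-4)$, which are distinct since $d>6$. Since $V\le (2d-8)|x|^{-2}$ for large $|x|$, $u$ is a subsolution of $\Delta u+(2d-8)|x|^{-2}u\ge u^2$ in an exterior region. I first show that $\limsup |x|^2u=0$. Suppose not. Rescale along a sequence $x_k\to\infty$ with $|x_k|^2u(x_k)\ge\delta$ and pass to a limit. This produces a positive entire solution on $\R^d\setminus\{0\}$ of $\Delta v+W v= v^2$ with $W\le(2d-8)|y|^{-2}$ and $v\le C|y|^{-2}$. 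Comparison with the supersolutions $A|y|^{-2}$, which satisfy $\Delta(A|y|^{-2})+(2d-8)|y|^{-2}A|y|^{-2}=0\le A^2|y|^{-4}$, together with the strict nonlinear term should force $v\equiv0$, a contradiction. This is the step I expect to be the main obstacle, because the borderline coefficient $2d-8$ makes the linear part degenerate exactly at the rate $|x|^{-2}$. I would try a maximum-principle argument on $\sup|y|^2v$, attained at some point after a further rescaling, where $\Delta(|y|^2v)$ picks up a $-|y|^2v^2<0$ contribution.

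Once $u=o(|x|^{-2})$, a standard Phragm\'en--Lindel\"of comparison with $\e|x|^{-2}+C|x|^{-(d-4)}$ on exterior annuli, letting $\e\to0$, gives $u\le C|x|^{-(d-4)}$. Then
\[
R^{-2}\int_{B_{2R}\setminus B_R}u^2\lesssim R^{d-2-2(d-4)}=R^{6-d}\to0 .
\]
Inserting this directly into \eqref{HR2} in place of \eqref{Holder}, Steps~5--7 of the proof of Theorem~\ref{thm:Liouv6} go through and give non-existence when $\lambda(-\L)=0$. For uniqueness I use Section~\ref{sec:uniq from non-exis}. The difference $w=\ol u-u\le \ol u$ inherits the same decay, and the adjusted reaction $h$ satisfies \eqref{proof f<f'-} with $c(x)=V-u\le V$, so the same argument applies to $w$. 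Existence when $\lambda(-\L)<0$ is Proposition~\ref{prop:maximal solution}$(ii)$, and when $\lambda(-\L)>0$ non-existence is immediate.

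\emph{Part~\ref{enum:V above Hd}.} First, $\lambda(-\L)<0$. Since $c>H_d$, the operator $\Delta+c|x|^{-2}$ is oscillatory, so it has negative Dirichlet eigenvalue on a large annulus $\{1<|y|<\rho\}$; by scaling, it also does on every annulus $\{R<|x|<\rho R\}$. Existence then follows from Proposition~\ref{prop:maximal solution}. The same scaling shows that $R^{-2}\times$(a small multiple of that eigenfunction, rescaled) is a subsolution on each annulus, which bounds the minimal solution below: $u_m\ge\delta|x|^{-2}$. The Osserman bound above gives $\ol u\le C|x|^{-2}$, using $V=O(|x|^{-2})$. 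Hence $\ol u\le K u_m$.

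Let $k^*=\inf\{k\ge1:\ol u\le k u_m\}$ and suppose $k^*>1$. Then $k^*u_m$ is a strict supersolution with defect $k^*(k^*-1)u_m^2\gtrsim |x|^{-4}$. Take a sequence where $\ol u/u_m\to k^*$. If it stays bounded, the strong maximum principle gives a contradiction. If it escapes to infinity, rescale by $|x_k|$: the limits satisfy the same type of equation with $W=\Theta(|y|^{-2})$, both limits are comparable to $|y|^{-2}$ and are therefore nontrivial, and they touch at a point with ratio $k^*$, again contradicting the strong maximum principle. Therefore $k^*=1$, so $\ol u=u_m$ and the solution is unique.
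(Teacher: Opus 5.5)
Your proposal is correct, but it takes a genuinely different, purely analytic route.

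\emph{The paper's route.} Part~1 uses a radial scaling-monotonicity trick. For $V=\min(c|x|^{-2},A)$ with $c>2d-8$, the rescaled maximal solutions $\alpha^2u_M(\alpha r)$ increase in $\alpha$ to a homogeneous solution of $\Delta u+c|x|^{-2}u=u^2$. This forces $u_M\le (c-(2d-8))|x|^{-2}$, and letting $c\downarrow 2d-8$ and comparing gives non-existence when $V\le(2d-8)|x|^{-2}$ everywhere. The BBM correspondence (Theorem~\ref{theo:PDE survival connection} together with Engl\"ander--Kyprianou) is then used twice: to pass to the hypothesis ``for $|x|$ large'', and to deduce uniqueness from $\Pm_x(\glSurv)=\Pm_x(\locSurv)$. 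In Part~2 the paper, like you, reaches $u_M\le Cu_m$, but concludes with a martingale argument on $\glSurv\setminus\locSurv$ rather than by sliding.

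\emph{Your route.} You show every solution is $O(|x|^{-(d-4)})$, which is exactly the rate $|x|^{a_*}$ at $c=2d-8$. Feeding this into \eqref{HR2} in place of \eqref{Holder} gives directly $H_R=O(R^{(6-d)/2})$ uniformly in $n$, so Steps~4--6 collapse. This argument is self-contained and needs no probabilistic localisation. It also makes transparent that above $d=6$ the only obstruction in Theorem~\ref{thm:Liouv6} is the crude H\"older step. The price is a compactness-by-rescaling argument in which the potentials converge only weak-$*$, so you need $W^{2,p}$ strong maximum principles. The paper's route is shorter, but only because it leans on external probabilistic results.

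\emph{Two details to tighten.}

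First, the step you flag as the main obstacle is routine, but your sign is reversed: the nonlinearity enters with a plus sign. For $\tilde w=|y|^2v$ one has
\[
\Delta\tilde w-4|y|^{-2}y\cdot\nabla\tilde w=|y|^{-2}\tilde w^2+\big((2d-8)|y|^{-2}-W\big)\tilde w\ge |y|^{-2}\tilde w^2 .
\]
The zero-order term cancels because $|y|^{-2}$ is $(\Delta+(2d-8)|y|^{-2})$-harmonic, so $\tilde w$ is a strict subsolution and cannot attain a positive interior maximum. Take $\delta:=\limsup_{|x|\to\infty}|x|^2u$ and a sequence $x_k$ realising it. Then the limit satisfies $\tilde w\le\delta$, with equality at $\lim x_k/|x_k|$, so no further rescaling is needed. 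Only the differential inequality has to pass to the limit, not the potential itself. Comparison with $A|y|^{-2}$ alone would not suffice, since you have no control at $0$ or at $\infty$.

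Second, in Part~2, minimality of $u_m$ does not by itself place $u_m$ above your rescaled annular eigenfunction subsolutions. Use the sliding argument from the proof of Proposition~\ref{prop:maximal solution}$(ii)$: every multiple $t\in(0,1]$ of such a subsolution is again a subsolution vanishing on the boundary of the annulus. Hence it cannot touch a positive solution from below, and so it lies below every positive solution.
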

\begin{rmk}[Double non-trivial phase transition]\label{rmk:double threshold PDE}
Take $V=\min(c\lvert x\rvert^{-2},1)$. There is a \textit{double non-trivial phase transition}: we have no positive and bounded solution for $c\in (0,2d-8)$, infinitely many for $c\in (2d-8,H_d)$, and exactly one for $c>H_d$. 
\end{rmk}

Before proving Theorems \ref{thm:Hardy_spercritical} and \ref{theo:bound on solns l=1}, we pose the following questions.

\begin{question}
It is unclear to us whether the solutions provided by \thm{Hardy_spercritical} are exhaustive. We point out however that \thm{Hardy_spercritical} (and its proof) imply that any positive, bounded solution must have a decay between
	$|x|^{a_*}$ and $|x|^{-2/\ell}$.
\end{question}
\begin{question}\label{question:sharp characterisation}
The spectral condition of \cite{Berestycki2025} fails to be satisfied whenever $V\ra 0$ at infinity (in fact, it fails more generally than this). In light of Theorems \ref{thm:Hardy_spercritical} and \ref{theo:bound on solns l=1}, this raises the question of whether a substantially sharper characterisation of uniqueness in dimensions above the threshold is possible.
\end{question}
In particular, take $f(s)=s-s^{1+\ell}$ for $\ell\geq 1$, $d>2+4/\ell$, and $r_0>0$ such that $\lambda(-(\Delta+1);(\Rm^d\setminus \cup_{x\in \mathbb{Z}^d}B(x,r_0)))=0$. Then in light of Theorem \ref{thm:Hardy_spercritical}, we posit that there exists $c\in \Rm_{>0}$ such that, taking $D=\Rm^d\setminus \cup_{x\in \mathbb{Z}^d}B(x,r_0-c/\lvert x\rvert^2)$, we have $\lambda(-(\Delta+1);D)=0$ but there are multiple positive and bounded solutions of 
\Fi{s-a counterexamples}
\left\{\begin{array}{ll}
Lu+f(u)=0, & x\in D,\\
u(x)=0, & x\in\partial D.
\end{array}\right.
\Ff

In order to prove \thm{Hardy_spercritical}, we first perform an analysis in a radially-symmetric framework.
	We introduce the radial operator ${\mathscr{L}}=\Delta+\frac{c}{|x|^2}$.
	When acting on radial functions $u=u(r)$ it becomes
\begin{equation*}
	{\mathscr{L}}=\partial_{rr}^2+\frac{d-1}{r}\,\partial_{r}+\frac{c}{r^2}.
\end{equation*}
	Note that all three terms are of the same order when computed on power-like functions. We also observe that this form is scale invariant.
	For 
	\[u(r)=r^a,\] we indeed get
	\[
	{\mathscr{L}} u=g(a)\,r^{a-2},
	\]
	where
	\[
	g(a):=a^2+(d-2)a+c.
	\]
	The sign of ${\mathscr{L}} u$ is determined by the sign of the polynomial $g$.
	The roots of~$g$ are real and distinct if and only if 
	\[c<H_d:=\Big(\frac{d-2}{2}\Big)^2,\]
	which is the second inequality in hypothesis \eqref{cbounds}. The roots are then
	\begin{equation*}
		a_*=-\sqrt{H_d}- \sqrt{H_d-c},
		\qquad
		a^*=-\sqrt{H_d}+ \sqrt{H_d-c}.
	\end{equation*}
	
	Next, we want the linear operator ${\mathscr{L}} u$ to dominate the nonlinear term $u^{1+\ell}$ in \eqref{eq:Hardy-critical}
	at infinity. This yields $a-2>a(1+\ell)$, that is, $a<-\frac2\ell$. We impose this on the largest root of $a^*$ and get
	\[
	-\sqrt{H_d}+ \sqrt{H_d-c}<-\frac2\ell,
	\]
	which amounts to the simultaneous conditions
	\[
	\sqrt{H_d}>\frac2\ell,
	\]
	i.e. $d> 2+\frac{4}{\ell}$, and
	\[
	c>\frac4\ell\sqrt{H_d}-\frac4{\ell^2}=H_d-\Big(\sqrt{H_d}-\frac2{\ell}\Big)^2.
	\]
	
In summary, under these conditions, which are in the hypotheses of the theorem, $g$
	has two real roots $a_*<a^*<0$, and
	\Fi{a*>nonlinear}
	a^*-2>a^*(1+\ell).
	\Ff
	
	After this preliminary analysis, we now proceed with the proofs of the statements of the theorem, in a different order.

	\begin{proof}[Proof of statement \eqref{enum:family uk}]
	For given $\kappa>0$, we want to construct a supersolution to~\eqref{eq:Hardy-critical} of the form
	\[w(r):=\kappa r^{a^*}+C r^a,\]
	with $a\in(a_*,a^*)$ and $C>0$ to be chosen.
	We have that
	\[\L w={\mathscr{L}} w+\Big(V(x)-\frac c{|x|^2}\Big)w=C g(a)r^{a-2}+\Big(V(x)-\frac c{|x|^2}\Big)w.\]
	In order to estimate the right-hand side for $r=|x|$ large we use \eqref{Hardy_potential}, so that
	there exists $D>0$ such that
	\Fi{asV}
	\big|V(x)-c|x|^{-2}\big|\leq D|x|^{-2-\e}
	\qquad\text{if }\;|x|\geq1.
	\Ff
	Therefore, the term $(V(x)-c|x|^{-2})w$ is at most of order $r^{a^*-2-\e}$, hence
	for $r$ large it is dominated by $g(a)r^{a-2}$ (which is negative because $a\in(a_*,a^*)$) 
	provided that $a>a^*-\e$. In conclusion, taking $a\in(\max(a_*,a^*-\e),a^*)$ and 
	$C$ sufficiently large, yields $\L w<0$ for $r\geq1$.
	On the other hand, constant functions with large enough constant are
	supersolutions to \eqref{eq:Hardy-critical}, and $w'(1)<0$. As a consequence, for $C$ large enough, the function
	\[\overline w(r):=\begin{cases}
		\kappa+C & \text{if }r\leq 1\\
		w(r) & \text{if }r> 1,
		\end{cases}
	\]
	is a generalised supersolution to \eqref{eq:Hardy-critical}.

    We now construct a subsolution of the form
	\[v(r):=\kappa r^{a^*}-C r^a,\]
	again with $a\in(a_*,a^*)$ and $C<\infty$ to be determined.
	Because of \eqref{a*>nonlinear}, we can take $a$ sufficiently close to $a^*$ so that
	\Fi{a>nonlinear}
	a-2>a^*(1+\ell).
	\Ff
	With the same computation as before we find
	\[\L  v\geq -C g(a)r^{a-2}-D\kappa r^{a^*-2-\e}
	\quad r>0.\]
	Hence if we impose as before 
	$a>a^*-\e$, and also $C> 1$, we get
	\[
	\L V>-g(a)r^{a-2}\quad\text{for}\quad r\geq \bar r:=[-2D\kappa/(Cg(a))]^{\frac{1}{a+\e-a^*}}.
	\]

So, in order to have a subsolution for $r\geq\bar r$, it is sufficient to have 
	that $-g(a)r^{a-2}$ is larger than the nonlinear term
	\[v^{1+\ell}=(\kappa r^{a^*}-C r^a)^{1+\ell},\] 
	in the region where the latter is non-negative, that is for $r$ larger than or equal to
	\[r_C:=\Big(\frac C\kappa \Big)^{\frac1{a^*-a}}=\text{ the unique $r$ such that $v(r)=0$}.\]

We will also want $r_C\geq \bar r$, which we see is guaranteed whenever $C$ is sufficiently large. We assume that $C<\infty$ is large enough to guarantee this. Then for $r\geq r_C$ we have $v^{1+\ell}\leq \kappa^{1+\ell}r^{a^*(1+\ell)}$, whence it is sufficient to have
\[
-g(a)r^{a-2}\geq \kappa^{1+\ell}r^{a^{\ast}(1+\ell)}\quad \text{for $r\geq r_C$.}
\]
	This condition is fulfilled for all $C$ sufficiently large because
	$r_C\to \infty$ as $C\to \infty$ and we have \eqref{a>nonlinear}. 
	
In summary, taking $C<\infty$ sufficiently large, we have guaranteed that $\L v\geq 0$ on $r\geq r_C$. We also have that $v(r_C)=0$, and $v'(r_C)=(a^{\ast}-a)\kappa r_C^{\ast}+ar_C^{-1}v(r_C)>0$. Thus if we adjust the function $v$ to be zero inside the ball $B(0,r_C)$, we obtain a generalised subsolution to \eqref{eq:Hardy-critical}.
	
	Finally, the sub/supersolution method yields a solution $v\leq u_\kappa\leq \overline w$, satisfying the desired decay.
\end{proof}

	\begin{proof}[Proof of statement \eqref{enum:maximal soln O(x-2l)}]  We first derive an upper bound for arbitrary solutions.
For given $C>0$, 
		we consider the family of radial functions $(w_\delta)_{\delta>0}$ defined by
	\[ w_\delta(r):=C r^{-\frac2\ell}+\delta.\]
	We compute, for $|x|=r>0$,
		\[\L  w_\delta= C \L (r^{-\frac2\ell})+\delta V(x)\leq
		C\tilde D r^{{-\frac2\ell}-2}+\delta \tilde D r^{-2},\]
	for some constant $\t D$ independent of $C,\delta$.
	Thus $w_\delta$ is a supersolution for $r> 0$ if 
	\[C\t D r^{{-\frac2\ell}-2}+\delta \t D r^{-2}
	\leq w_\delta^{1+\ell}=(C r^{-\frac2\ell}+\delta)^{1+\ell}.\]
	Since 
	\[(C r^{-\frac2\ell}+\delta)^{1+\ell}\geq (C r^{-\frac2\ell})^{1+\ell}+(1+\ell)C^\ell r^{-2}\delta
	= C^{1+\ell} r^{-\frac2\ell-2}+(1+\ell)C^\ell r^{-2}\delta,\]
	if we take $C=\t D^{1/\ell}$ we have that $w_\delta$ is a strict supersolution for any $\delta>0$. 
	Since a large constant $S>0$ is also a supersolution, 
	the function 
	\[
	\t w_\delta:=\min(w_\delta,S+\delta)=\min(Cr^{-\frac{2}{\ell}},S)+\delta
	\] is a bounded generalised strict supersolution, for any $\delta>0$.
	
We now use the family $(\t w_\delta)_{\delta>0}$ to show that any positive and bounded solution decays at least like $C|x|^{-\frac 2\ell}$. To do so, we fix $u_M$ to be the maximal positive and bounded solution, and prove that $w_{\delta}>u_M$ for all $\delta>0$.

We fix $\delta>0$. We firstly observe that, given an arbitrary sequence $(x_n)$ with $\lvert x_n\rvert\ra\infty$, $u_M(x_n+\cdot)$ converges in $C^{2,\alpha}_{\loc}$ (for some $\alpha\in (0,1)$) to a bounded and non-negative solution of $\Delta y-y^{1+\ell}=0$(since $V(x)\ra 0$ as $\lvert x\rvert\ra \infty$), the only solution of which is $y\equiv 0$. Therefore $u_M(x)\ra 0$ as $\lvert x\rvert\ra \infty$, so $w_{\delta}(x)>u_M(x)$ for all $\lvert x\rvert$ large enough.

We now prove that
\begin{equation}\label{eq:w delta geq uM}
w_{\delta}\geq u_M
\end{equation}
We suppose for contradiction that $\{x:w_{\delta}(x)<u_M(x)\}\neq \emptyset$. Then we can take $O$ to be a connected component of this set, which must be bounded by the above. Then on $O$ we have that $z:=u_M-w_{\delta}$ satisfies 
\[
\left(\L +\frac{f(x,u_M(x))-f(x,w_{\delta}(x))}{u_M(x)-w_{\delta}(x)}\right)z\geq 0,\quad z\overset{\xi}{=} 0\text{ on $\partial O$,}
\]
where $f(x,s):=V(x)s-s^{1+\ell}$ (recall the meaning of $\overset{\xi}{=}$ was defined in Section \ref{section:Barrier}). However, since $f_{ss}(x,s)<0$ for all $x\in \Rm^d$, $s>0$, we have
\[
\frac{f(x,u_M(x))-f(x,w_{\delta}(x))}{u_M(x)-w_{\delta}(x)}<\frac{f(x,u_M(x))}{u_M(x)},
\] 
for all $x\in \Rm^d$. Using this and the fact $O$ is bounded, we have
\[
\lambda(-(\L +\frac{f(x,u_M(x))-f(x,w_{\delta}(x))}{u_M(x)-w_{\delta}(x)}));O)>\lambda(-(\L +\frac{f(x,u_M(x))}{u_M(x)});\Rm^d)\geq 0.
\]
The last inequality holds here since $u_M$ is $[\L +f(x,u_M(x))/u_M(x)]$-harmonic. We obtain a contradiction from the maximum principle \cite[Theorem 1.1]{BNV1994}.

Thus $w_{\delta}\geq u_M$ everywhere, for all $\delta>0$. Taking $\delta\ra 0$, we obtain \[u_M\leq w_0=Cr^{-\frac2\ell}.\]\commentout{

	The family $(\t w_\delta)_{\delta>0}$ allows one to show that any bounded (sub)solution decays at least as $|x|^{-\frac 2\ell}$.
	Indeed, let $u$ be a bounded positive  solution.
	First of all, using that $V(x)\to0$ as $|x|\to\infty$, one readily sees that the same holds true for $u$\nota{\BS details needed?}.
	As a consequence, $\t w_\delta>u$ for any $\delta>0$, for $|x|$ large enough, and thus $\t w_\delta>u$ on $\R^d$
	provided that $\delta>0$ is sufficiently large.
	Suppose by contradiction that this inequality does not hold true for any $\delta>0$, and call $\bar\delta$ the maximal
	$\delta>0$ for which it fails somewhere in $\R^d$.
	This means that the strict supersolution $\t w_{\bar\delta}$ touches from above the solution $u$ somewhere, 
	contradicting the strong maximum principle.
	We have thereby shown that
	$u<\t w_\delta$ for all $\delta>0$, that is
	\[u\leq w_0=Cr^{-\frac2\ell}.\]}
	
	We need to show that the maximal solution satisfies a lower bound with the same power. 
	We achieve this by constructing a subsolution.
	We fix $a\in(a_*,a^*)$ and then, for $\eta>0$ to be determined, we set
	\[ v(r):=\eta r^{-\frac2\ell}-r^a.\]
	Using \eqref{asV}, we find for $|x|=r\geq1$ that
	\[\L  v\geq \eta r^{-\frac2\ell-2}\big( g(-2/\ell)-D r^{-\e}\big)
	-r^{a-2}\big(g(a)+D r^{-\e}\big).\]
	We now recall that $a_*<a<a^*<-\frac2\ell$, hence $g(a)<0<g(-2/\ell)$,
	and so there exists $\bar r>1$ which does not depend upon $\eta$ such that we have
	\[\L  v\geq \frac{\eta g(-2/\ell)}{2} r^{{-\frac{2}{\ell}}-2}\quad \text{for}\quad r\geq \bar r.
	\]
	On the other hand, in the region where $v\geq0$, i.e.~for $r$ larger than or equal to
	\[r_\eta:=\eta^{\frac1{a+\frac2\ell}}=\text{ the unique $r$ such that $v(r)=0$},\]
	which goes to $+\infty$ as $\eta\to0$, one has 
	\[v^{1+\ell}(r)\leq \eta^{1+\ell} r^{-\frac2\ell-2}.\]
	As a consequence, for $\eta$ sufficiently small one has that $r_\eta> \bar r$, and
	\[\L  v>v^{1+\ell}\quad \text{on $r\geq r_\eta$.}\]
Moreover $v'(r_\eta)=(-\frac{2}{\ell}-a)\eta r^{-\frac{2}{\ell}-1}+ar^{-1}v(r_{\eta})>0$.
	
	Therefore the function $\max(v,0)$ is a generalised subsolution to~\eqref{eq:Hardy-critical}.
	We can then find a solution $u$ between $\max(v,0)$ and a large constant $S>0$ (which is a supersolution).
	This shows that the maximal solution $u_M$ satisfies, for $r$ sufficiently large,
	\[u_M\geq v\geq \frac\eta2 r^{-\frac2\ell}.\]
	This concludes the proof of statement \eqref{enum:maximal soln O(x-2l)}.
\end{proof}
	

	\begin{proof}[Proof of statement \eqref{enum:minimal soln estimate}]
	
	Take $a_*-\e<a<a_*$, with $\e>0$ from hypothesis~\eqref{Hardy_potential}. Then define the function 
	\[w(r):=r^{a_*}-r^a,\]
	which is non-negative for $r\geq1$.
	Using \eqref{asV}, we find, for $|x|=r\geq1$,
	\[\L  w\leq D r^{a_*-2-\e}
	-r^{a-2}\big(g(a)-D r^{-\e}\big).\]
	Since $g(a)>0$, we see that $\L  w<0$ outside some ball $B(0,\bar r)$, with $\bar r>1$ large enough.
	This clearly holds true for any positive multiple of $w$. 
	We deduce that $Cw$ is a supersolution to~\eqref{eq:Hardy-critical} outside $B(0,\bar r)$, for any $C>0$.
	
	We then take $C$ large enough so that the constant function $C(\bar r^{a_*}-\bar r^a)$ is a supersolution to \eqref{eq:Hardy-critical}.
	Then, with this choice, the function $\min(Cw,C(\bar r^{a_*}-\bar r^a))$ is a generalised supersolution in the whole space $\R^d$.
	
	The condition ${\lambda(-\L )<0}$ allows one to obtain a minimal positive solution, $u_m$, which lies below any positive supersolution. We deduce that 
	\[
	u_m\leq C( r^{a_*}- r^a)\leq C r^{a_*}.
	\]


	We now derive a lower bound with the same power $a_*$, for any given positive solution $u$.
	We fix $a,b$ such that $a<a_*<b<a^*$, and in addition $a$ close enough to~$a_*$ so that
	\Fi{a>a_*-e}
	a>a_*-\e\quad \text{ and }\quad
	a-2>a_*(1+\ell),
	\Ff
 	which is possible because the second inequality holds for $a=a_*$, since $a_*<-\frac2\ell$.

	Then for $\eta,\theta\in(0,1]$, we define the function
	\[ v(r):=\eta(r^{a_*}+r^a)-\theta r^{b}.\]
	This function is non-negative for $r\leq r_{\eta,\theta}$, where
	\[r_{\eta,\theta}:=\inf\{r>0:v(r)\leq 0\}=\inf\{r>0:r^{a_*-b}+r^{a-b}\leq \frac\theta\eta\}\geq \Big(\frac{\eta}{\theta}\Big)^{\frac{1}{b-a^*}}.\]
	If $|x|=r\in[1,r_{\eta,\theta}]$, we have by 	\eqref{asV},
	\[
	\begin{split}
		\L  v-v^{1+\ell} \geq &\, \eta\big[-Dr^{a_*-2-\e}+r^{a-2}( g(a)-D r^{-\e})-\eta^\ell (r^{a_*}+r^a)^{1+\ell}\big]\\
		&
	-\theta r^{b-2}\big(g(b)+D r^{-\e}\big),
	\end{split}\]
The last term on the right-hand side is positive for all $r$ large enough, not dependent upon $\eta,\theta$, because $g(b)<0$. Here we recall that $D$ is such that $|V(x)-c|x|^{-2}|\leq D|x|^{-2-\e}$, for all $|x|\geq1$. The term
	\[-Dr^{a_*-2-\e}+r^{a-2}( g(a)-D r^{-\e})-\eta^\ell (r^{a_*}+r^a)^{1+\ell}\] 
	is dominated by the positive term $r^{a-2}g(a)$, independently of $\eta\in(0,1]$, owing to~\eqref{a>a_*-e}.
	As a consequence, 
	there exists $\bar r\geq 1$, independent of $\eta,\theta$, 
	such that $\L  v> v^{1+\ell}$ if $r\in[\bar r,r_{\eta,\theta}]$.
	
	We then fix $\eta\in(0,1]$ such that 
	\Fi{v<u}
	\eta( \bar r^{a_*}+ \bar r^a)<\min_{\partial B(0,\bar r)}u_m,
	\Ff
	so that $u_m>v$ on $ \partial B(0,\bar r)$ for any $\theta> 0$. We now establish that 
	\begin{equation}\label{eq:u geq v difference of balls}
	u_m\geq v \quad\text{on} \quad B(0,r_{\eta,\theta})\setminus B(0,\bar r), \quad\text{for any $\theta$ such that $r_{\eta,\theta}>\bar r$.}
	\end{equation}
	\begin{proof}[Proof of \eqref{eq:u geq v difference of balls}]
	We repeat the proof of \eqref{eq:w delta geq uM}. In particular, we assume for contradiction that $\{x\in B(0,r_{\eta,\theta})\setminus B(0,\bar r):u_m(x)< v(x)\}$ is non-empty, and take $O$ to be a connected component of it. Then since $u_m>v$ on $\partial B(0,r_{\eta,\theta})\cup \partial B(0,\bar r)$, $O\subset\subset\text{int}(B(0,r_{\eta,\theta})\setminus B(0,\bar r))$ and we have $v-u_m\overset{\xi}{=}0$ on $\partial O$. Since $v$ is a subsolution and $u_m$ is a solution, we can then apply the maximum principle \cite[Theorem 1.1]{BNV1994} as in the proof of \eqref{eq:w delta geq uM} to conclude a contradiction, hence \eqref{eq:u geq v difference of balls} is proven. 
	\end{proof}
	Finally, we send $\theta \ra 0$, using that $r_{\eta,\theta}\ra \infty$ as $\theta \ra 0$, to obtain $u_m(x)\geq \eta(\lvert x\rvert^{a_{\ast}}+\lvert x\rvert^a)$ for all $\lvert x\rvert\geq \bar r$. The statement is proven.\end{proof}	

\begin{proof}[Proof of statement \eqref{enum:evalue<=0 and both possible}]
Since there exists $R<\infty$ such that $V\geq 0$ for	$\lvert x\rvert \geq R$, we have
\[
\lambda(-\L ):=\lambda(-\L ;\Rm^d)\leq \lambda(-\L ;B(0,R)^c)\leq \lambda(-\Delta;B(0,R)^c)=0.
\]


	In order to find a potential $V$ for which $\lambda(-\L )<0$, it is sufficient to add to an arbitrary non-negative potential a compact perturbation which is $1$ on a sufficiently large ball~$B(0,R)$ (for example). This guarantees that $\lambda(-\L )=\lambda(-\L ;\Rm^d)\leq \lambda(-(\Delta+1);B(0,R))<0$, without affecting the hypothesis \eqref{Hardy_potential}.
	
	We now suppose that $V\leq H_d/\lvert x\rvert^2$. Then the Rayleigh quotient gives 
	\begin{equation}\label{eq:use of Hardys inequality}
	\begin{split}
	\lambda(-\L )&=\inf_{u\in H^1_0:\lvert\lvert u\rvert\rvert_{L^2}=1}\int_{\Rm^d}\lvert \nabla u\rvert^2-V(x)\lvert u\rvert^2\\
	&\geq \inf_{u\in H^1_0:\lvert\lvert u\rvert\rvert_{L^2}=1}\left[\int_{\Rm^d}\frac{H_d-H_d}{\lvert x\rvert^2}\lvert u\rvert^2\right]\geq 0,
\end{split}
	\end{equation}
	where we are using Hardy's inequality and $V\leq H_d/\lvert x\rvert^2$ to obtain the first inequality.
	\begin{rmk}\label{rmk:Hardy constant sharp}
	The Hardy constant is sharp. From this it is simple to deduce that if $V\sim c/\lvert x\rvert^2$ for $c>H_d$, then $\lambda(-\L )<0$.
	\end{rmk}
    \end{proof}   
    \begin{proof}[Proof of Part \ref{enum:V at most 2d-8} of Theorem \ref{theo:bound on solns l=1}] 
    We take $\ell=1$, $d>6$ and suppose that $V=\min(c/\lvert x\rvert^2,A)$ for some $A<\infty$ and $2d-8<c<H_d$. Since $V$ is rotationally symmetric, so too is the maximal solution, $u_M$; in the following we write everything in polar coordinates. We claim that
    \begin{equation}\label{eq:bound on uM c large than 2d-8}
    u_M(r)\leq (8-2d+c)r^{-2}.
    \end{equation}
   We consider for $\alpha>1$ the rescaled equation,
    \begin{equation}\label{eq:rescaled equation R}
    \Delta u_\alpha+\frac{d-1}{r}\partial_ru_\alpha+V_\alpha(r)u_\alpha-u_\alpha^2=0,
    \end{equation}
    where $V_\alpha(r):=\alpha^2V(\alpha r)=\min(c/\lvert x\rvert^2,\alpha ^2A)$, whose solutions are related to those of \eqref{eq:Hardy-critical} by $u_\alpha (r)=\alpha ^2u(\alpha r)$. The maximal solution of the rescaled equation, $u_{\alpha ,M}$, satisfies $u_{\alpha ,M}(r)=\alpha ^2u_M(\alpha r)$. Then since $V_\alpha $ is non-decreasing in $\alpha $, so too is $u_{\alpha ,M}$, hence $\alpha ^2u_M(\alpha r)$ is non-decreasing in $\alpha $ for any $r\in (0,\infty)$. Since $\alpha ^2u_M(\alpha r)$ is bounded (by Statement \eqref{enum:maximal soln O(x-2l)} of Theorem \ref{thm:Hardy_spercritical}), there exists $A\in (0,\infty)$ such that $\alpha ^2u_M(\alpha r)\uparrow A r^{-2}$. On the other hand, from Schauder estimates applied to the $u_{M,\alpha }$s, $u_{\infty}:=\lim_{\alpha \ra\infty}u_{\alpha ,M}(r)=\lim_{\alpha \ra\infty}\alpha ^2u_M(\alpha r)$ satisfies
    \[
    \Delta u_{\infty}+\frac{d-1}{r}\partial_ru_{\infty}+\frac{c}{r^2}u_{\infty}-u_{\infty}^2=0,\quad r>0.
    \]
    Therefore we have \eqref{eq:bound on uM c large than 2d-8}. 

    Therefore if $V$ is any potential satisfying $V(x)\leq (2d-8)\lvert x\rvert^{-2}$ everywhere, then the comparison principle and the fact that the upper bound converges to $0$ as $c\downarrow 2d-8$ implies there is no positive and bounded solution to \eqref{eq:Hardy-critical}. 

    We now suppose that $V\in L^{\infty}(\Rm^d;\Rm_{\geq 0})$ satisfies $V\leq (2d-8)\lvert x\rvert^{-2}$ for all $\lvert x\rvert$ large enough. By using the rescaling in \eqref{eq:rescaled equation R} but with $0<\alpha \ll 1$, we assume without loss of generality that $V\leq 1$ everywhere. Note that whether $\lambda(-\L)<0$ or $\lambda(-\L)=0$ remains unchanged under this rescaling.
    
    We will need, in the rest of the proof of Theorem \ref{theo:bound on solns l=1}, to make use of the probabilistic correspondence introduced in the following section, recalled from \cite{Maillard2025} as Theorem \ref{theo:PDE survival connection}. We take in the following the setup of that section. Then for the corresponding branching Brownian motion $\bar X_t$, we need to show that $\Pm_x(\glSurv\setminus \locSurv)\equiv 0$. Theorem \ref{theo:PDE survival connection} and the above non-existence result tells us that we would have $\Pm_x(\glSurv)\equiv 0$ were we to know that $V\leq c\lvert x\rvert^{-2}$ everywhere. Therefore, since there exists $R<\infty$ and $c<2d-8$ such that $V(x)\leq c\lvert x\rvert^{-2}$ for $\lvert x\rvert\geq R$, we have $\Pm_x(\glSurv\cap \text{no particle returns to $B(0,R)$})\equiv 0$. This implies that $\Pm_x(\glSurv\setminus \locSurv)\equiv 0$.

    Finally, $\Pm_x(\glSurv)=\Pm_x(\locSurv)$, so $\Pm_x(\glSurv)>0$ if and only if $\lambda(-\L)<0$ by \cite{Englander2004}. This implies the existence of a positive and bounded solution is equivalent to $\lambda(-\L)<0$ by Theorem \ref{theo:PDE survival connection}.
    \end{proof}
    \begin{proof}[Proof of Part \ref{enum:V above Hd} of Theorem \ref{theo:bound on solns l=1}]  
    We have $\lambda(-\L)<0$ by Remark \ref{rmk:Hardy constant sharp}, so there exists a minimal positive and bounded solution by Theorem \ref{theo:PDE survival connection} and \cite{Englander2004}. Rescaling as above, we assume without loss of generality that $0\leq V\leq 1$ everywhere.

    The maximal solution satisfies $u_M(x)=O(\lvert x\rvert^{-2})$, by the proof of the upper bound in Part \eqref{enum:maximal soln O(x-2l)} of Theorem \ref{thm:Hardy_spercritical}. On the other hand, by following the proof of Part \ref{enum:V at most 2d-8} of Theorem \ref{theo:bound on solns l=1} with the minimal solution in place of the maximal solution, we see that the minimal solution satisfies $u_m(x)=\Theta(\lvert x\rvert^{-2})$. Thus there exists $C<\infty$ such that $u_M\leq Cu_m$ everywhere. 

    We take the corresponding branching Brownian motion $\bar X_t$, and suppose $\Pm_x(\glSurv\setminus\locSurv)>0$. Theorem \ref{theo:PDE survival connection} and Levy's upwards theorem imply that
    \[
    1-\prod_{i=1}^{N_t}(1-u_M(X^i_t))=\Pm_x(\glSurv\lvert \mathcal{F}_t)\ra 1\quad\text{and}\quad 1-\prod_{i=1}^{N_t}(1-u_m(X^i_t))=\Pm_x(\glSurv\lvert \mathcal{F}_t),
    \]
    almost surely on the event $\glSurv\setminus \locSurv$, where $(\mathcal{F}_t:t\geq 0)$ is the Filtration. However $\inf_i\lvert X^i_t\rvert \ra \infty$ on $\glSurv\setminus \locSurv$. Thus $\sum_{i=1}^{N_t}u_M(X^i_t)\ra \infty$ and $\sum_{i=1}^{N_t}u_m(X^i_t)\ra 0$ as $t\ra\infty$ on $\glSurv\setminus \locSurv$, but $\sum_{i=1}^{N_t}u_M(X^i_t)\leq C\sum_{i=1}^{N_t}u_m(X^i_t)$. This is a contradiction, so $\Pm_x(\glSurv\setminus\locSurv)\equiv 0$ and we are done by 
    Theorem~\ref{theo:PDE survival connection}.
    \end{proof}

\section{Proof of the results for the evolution 
equation}\label{section:evolution proofs}

We start this section with the proof of Theorem \ref{thm:cptperturb} $(i)$.
Then we will deduce as a corollary the stability of any positive
bounded steady state with respect to compact perturbations, 
and next we will derive from it
Theorem \ref{thm:cptperturb} $(ii)$. Finally, we construct the
non-convergent solution of Theorem \ref{thm:oscillating}.

\begin{proof}[Proof of Theorem \ref{thm:cptperturb} $(i)$]
Let $\vp$ be a generalised principal eigenfunction of $-\L$ 
in $D$ with Dirichlet boundary condition, that is
\[
\left\{
\begin{array}{rl}
-\L\varphi=\lambda(-\L)\vp
 & \text{in }D,\\[5pt]
\vp=0 & \text{on }\partial D.
\end{array}\right.
\]
This is provided by 
\cite[Theorem~1.4]{BR4}
under the assumption that $\partial D$ is $C^{1,1}$, 
but the construction works without any change for $\partial D$ 
locally Lipschitz
(the stronger regularity was only required in \cite{BR4}
to get some boundary regularity for $\vp$)\commentout{
\nota{Maybe we will need to rewrite how $\vp$ is constructed to justify the 
following integration}}. We then define
\[\gamma(t):=\int_D\eta(t,x)\vp(x)dx.\]
Using integration by parts and $\l(-\L)\geq0$, we find
\[\gamma'(t)=\int_D\partial_t\eta(t,x)\vp(x)dx
=\int_D\eta(t,x) L\vp(x)+f(x,\eta)dx
\leq\int_D \big(f(x,\eta)-f_s(x,0)\eta\big)dx.\]
It then follows form the KPP-type hypotheses that $\gamma'\leq0$.
Then in particular $\gamma(t)$ is bounded for $t\in[0,+\infty)$, 
and so is $\eta(t,x)$, locally uniformly in $x\in D$, thanks to the interior
Harnack inequality (see \cite{Krylov1981} or \cite[Corollary 7.42]{Lie}).

Suppose by contradiction that $\eta$ does not converge to $0$ as $t\to+\infty$, 
locally uniformly in $\bar D$. Then there exist $R,\varepsilon>0$ and a sequence $(t_n)_{n\in\N}$ diverging to $+\infty$ such that $\sup_{x\in D\cap B_R}\eta(t_n,x)>\varepsilon$
for all $n\in\N$.
On the other hand, we can find a set $K\subset\subset D\cap E$ with positive 
measure. Since $\partial\O$ is Lipschitz,
we can apply the boundary Harnack inequality
(see \cite{bHarnack-div} or \cite[Theorem 3.5]{Huska}) 
together with a covering argument
and the standard interior Harnack inequality, and deduce the existence of a 
positive constant $C$ such that
\[
\forall n\in\N,\qquad
\sup_{x\in D\cap B_R}\eta(t_n,x)\leq C\inf_{\tau\in[1,2],\,
x\in K}\eta(t_n+\tau,x).\]
Since the left-hand side above is larger than $\e$, we deduce 
\[\forall n\in\N,\ t\in[t_n+1,t_n+2],\quad
\gamma'(t)\leq 
|K|\sup_{x\in K,s\in[C^{-1}\e,S']}(f(x,s)-f_s(x,0)s).\]
where $S':=\|u\|_{L^\infty((0,+\infty)\times K)}$.
The right-hand side above is a negative constant, independent of $n$,
thus $\gamma(t)\to-\infty$ as $t\to+\infty$, which is impossible
because ${\gamma\geq0}$.
\end{proof}

\begin{cor}\label{cor:stability}
Suppose that Assumption~\ref{assum:standing_Lip} holds
    and that $f$ satisfies the KPP-type condition
    \eqref{eq:mediumKPP f}, together with the strong-KPP
    condition~\eqref{eq:strongKPP f x} for all $x$ in some set $E\subseteq  D$ with positive measure.
Suppose further that the stationary problem \eqref{s-a}
admits a positive bounded solution $u$.
Let $\eta$ be a solution of~\eqref{eq:general reaction-diffusion}
with an initial datum 
$\eta_0\geq0$ such that $\eta_0-u$ has compact support in $\bar D$.
Then $\eta(t,x)\to u(x)$ as $t\to+\infty$, locally uniformly in $x\in\bar D$.
\end{cor}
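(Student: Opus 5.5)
The plan is to reduce Corollary \ref{cor:stability} to Theorem \ref{thm:cptperturb} $(i)$ applied to a shifted problem, in the spirit of Section \ref{sec:uniq from non-exis}. We split the perturbation into its positive and negative parts, and compare $\eta$ with solutions starting from $\max(\eta_0,u)$ and $\min(\eta_0,u)$. By the parabolic comparison principle these bracket $\eta$. It therefore suffices to treat separately the cases $\eta_0\geq u$ and $\eta_0\leq u$, each with $\eta_0-u$ compactly supported.

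\emph{Case $\eta_0\geq u$.} Set $w:=\eta-u\geq0$ by comparison. Then $w$ solves
\[
\partial_t w=Lw+h(x,w),
\]
with $h$ given by \eqref{h}, Dirichlet boundary conditions, and compactly supported initial datum $w(0)=\eta_0-u$. By Proposition \ref{pro:nonex-uniqueness}, $h$ satisfies the standing conditions and the weak-KPP-type inequality $h(x,s)\leq c(x)s$ with $c=f(x,u)/u$. This inequality is strict on $E$, and $\partial_s h(x,0)=\partial_sf(x,u(x))\leq c(x)$. Here we must take care: Theorem \ref{thm:cptperturb} $(i)$ is stated with the linearisation $f_s(x,0)$. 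Its proof, however, only uses an $\L$-eigenfunction $\vp$ with $\lambda\geq0$ and the pointwise inequality $f(x,s)\leq c(x)s$, strict on $E$. So we rerun that proof with $\L_u=L+c$. Its principal eigenvalue satisfies $\lambda(-\L_u)\geq0$ because $u$ is a positive $\L_u$-harmonic function.

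One obstacle here is the existence of a generalised eigenfunction $\vp$ of $-\L_u$ for which the integration by parts in $\gamma'(t)$ is legitimate. The natural candidate is $\vp=u$ itself, which has eigenvalue $0$. With this choice
\[
\gamma(t)=\int_D w\,u
\]
is finite, since $w$ is controlled by a compactly supported datum through Gaussian bounds. The computation then gives
\[
\gamma'(t)=\int_D u\,\big(h(x,w)-c\,w\big)\leq0 .
\]
The rest of that proof, namely the Harnack and boundary Harnack inequalities on Lipschitz domains, carries over verbatim and yields $w\to0$ locally uniformly.

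\emph{Case $\eta_0\leq u$.} Set $w:=u-\eta\geq0$. Then $w$ solves $\partial_t w=Lw+\tilde h(x,w)$ with
\[
\tilde h(x,s)=f(x,u)-f(x,u-s),\qquad 0\leq s\leq u .
\]
Using $g=f/s$ non-increasing, we get $\tilde h(x,s)\geq\ldots$; this is the wrong direction for a direct KPP bound. The main obstacle is therefore this subsolution side. First I would instead argue directly with the same $\gamma(t)=\int_D(u-\eta)u$. Writing $g(x,s):=f(x,s)/s$ as in Section \ref{sec:uniq from non-exis}, we compute
\[
\gamma'(t)=\int_D u\big(f(x,u)-f(x,\eta)\big)-\int_D w\,f(x,u)=\int_D u\,\eta\,\big(g(x,u)-g(x,\eta)\big)\leq0,
\]
since $\eta\leq u$. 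The inequality is strict on $E$ whenever $\eta$ stays away from $u$ there.

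The positivity of $\eta$ on $K\subset\subset E$ follows from $\eta_0\not\equiv0$ and Harnack. Here $\eta_0\not\equiv0$ is automatic, because $\eta_0=u>0$ off a compact set. The contradiction argument of Theorem \ref{thm:cptperturb} $(i)$ then shows $w\to0$ locally uniformly, using the Harnack inequality for $w$, which satisfies a linear equation with bounded coefficient. Combining the two cases with the comparison sandwich concludes the proof of Corollary \ref{cor:stability}.
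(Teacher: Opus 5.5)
Your sandwich reduction and your treatment of the case $\eta_0\ge u$ are sound. Taking $\varphi=u$ as the weight is a clean alternative to the paper's device: $u$ is $\L_u$-harmonic for $\L_u=L+g(x,u)$, whereas the paper multiplies the nonlinear part of $h$ by $s/\|z\|_\infty$ so that $\partial_s h(x,0)=g(x,u)$ and then quotes Theorem~\ref{thm:cptperturb}~$(i)$ directly.

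The gap is in the case $\eta_0\le u$. There the dissipation is $-\gamma'(t)=\int_D u\,\eta\,(g(x,\eta)-g(x,u))$, and it carries a factor $\eta$. Harnack for $w=u-\eta$ gives $w\ge\epsilon/C$ on $K\times[t_n+1,t_n+2]$, so $\eta$ stays away from $u$. To bound $-\gamma'$ below by a constant independent of $n$, however, you also need $\eta\ge\delta>0$ there, uniformly in $n$. Your sentence ``positivity of $\eta$ on $K$ follows from $\eta_0\not\equiv0$ and Harnack'' only gives $\eta(t,\cdot)>0$ for each fixed $t$. If $\eta(t_n+\tau,\cdot)\to0$ on $K$, the integrand degenerates and no contradiction results.

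Relatedly, your remark that $\tilde h$ goes in ``the wrong direction'' is mistaken. In fact $\tilde h(x,s)=g(x,u)s+(g(x,u)-g(x,u-s))(u-s)\le g(x,u)s$ for $0\le s\le u$, which is just your formula for $\gamma'$. The real obstruction is that this becomes an equality at $s=u(x)$, i.e.\ where $\eta=0$. The paper's auxiliary $h$ for this case also satisfies $h(x,u(x))=g(x,u(x))\,u(x)$, so its appeal to Theorem~\ref{thm:cptperturb}~$(i)$ needs the same extra input.

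The missing lower bound comes from the monotonicity of $\gamma$ itself. Suppose $\inf_{K\times[t_n+1,t_n+2]}\eta\to0$ along a subsequence. Since $0\le\eta\le u$, $\eta$ solves $\partial_t\eta=L\eta+g(x,\eta)\eta$ with a bounded coefficient, and it vanishes on $\partial D$. The same boundary and interior Harnack inequalities then give $\sup_{D\cap B_{R'}}\eta(t_n,\cdot)\le C_{R'}\inf_{K\times[t_n+1,t_n+2]}\eta\to0$ for every $R'$. Hence $\liminf_n\gamma(t_n)\ge\int_{D\cap B_{R'}}u^2$ for all $R'$, so $\liminf_n\gamma(t_n)\ge\int_D u^2$. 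On the other hand $\gamma(t_n)\le\gamma(0)=\int_D(u-\eta_0)u$, which is strictly less than $\int_D u^2$ as soon as $\eta_0\not\equiv0$ (trivially so if $u\notin L^2(D)$). This contradiction gives $\eta\ge\delta>0$ on $K\times[t_n+1,t_n+2]$ for large $n$. On those intervals,
$-\gamma'\ge|K|\,\delta\,\min_K u\,\min\{g(x,s)-g(x,u(x)):\ x\in K,\ \delta\le s\le u(x)-\epsilon/C\}>0$, which finishes your argument.

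Finally, $\eta_0\not\equiv0$ is automatic only when $D$ is unbounded. If $D$ is bounded, the compact-support hypothesis is vacuous and $\eta_0\equiv0$ violates the conclusion, so this has to be assumed.
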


\begin{proof}
Suppose first that $\eta_0\geq u$.
Consider the difference $z(t,x):=\eta(t,x)-u(x)$. It 
is non-negative by the parabolic comparison principle, and satisfies
\[
\partial_t z-\partial_i(a_{ij}(x)\partial_j z)
=g(x,u(x))z+\big(g(x,u(x)+z)-g(x,u(x))\big)(u(x)+z),
\]
where $g(x,s):=f(x,s)/s$, which is nonincreasing in $s>0$, and strictly
decreasing for $x\in E$, by hypothesis.
Call
\[
h(x,s):=g(x,u(x))s+\big(g(x,u(x)+s)-g(x,u(x))\big)(u(x)+s)\frac{s}{\|z\|_\infty}.
\]
We see that $z$ is a subsolution to the equation
\Fi{RDh}
\partial_t \zeta-\partial_i(a_{ij}(x)\partial_j \zeta)= h(x,\zeta).
\Ff
One can check that $h$ satisfies the conditions on $f$
in Assumption~\ref{assum:standing_Lip},
and also \eqref{eq:weakKPP f}
with a strict inequality for all $x\in E$, where $\partial_s h(x,0)=g(x,u(x))$.
Moreover the linearised operator around $0$ for~\eqref{RDh}
is $\L_u:=L+g(x,u(x))$ which satisfies $\lambda(-\L_u)\geq 0$, as it
is seen by taking $u$ in its definition.
It then follows from Theorem~\ref{thm:cptperturb} $(i)$
that $z\to0$, i.e.~$\eta\to u$, 
locally uniformly in $\bar D$ as $t\to+\infty$.

Next, suppose that $\eta_0\leq u$, and call
now $z:=u(x)-\eta(t,x)\geq0$. 
For convenience, we extend $f$ for negative values of $s$ by
$f(x,s)=f_s(x,0)s-s^2$, so that the
strong-KPP condition~\eqref{eq:strongKPP f x} holds true for $s<0$,
for all $x\in D$.
The function $z$ satisfies
\[
\partial_t z-\partial_i(a_{ij}(x)\partial_j z)
=g(x,u(x))z+\big(g(x,u(x))-g(x,u(x)-z)\big)(u(x)-z),
\]
hence it is a subsolution to \eqref{RDh} with
\[
h(x,s):=g(x,u(x))s+\big(g(x,u(x))-g(x,u(x)-s)\big)(u(x)-s)\frac{s}{\|z\|_\infty}.
\]
This function $h$ fulfils the same conditions as the previous one, 
thus we conclude again that
$\eta\to u$ locally uniformly in $\bar D$ as $t\to+\infty$.

The general case without any sign condition on $\eta_0-u$
is derived by applying the above conclusions to the initial data
$\max(\eta_0-u,0)$ and $\min(\eta_0-u,0)$, and then using 
the parabolic comparison principle.
\end{proof}

\begin{proof}[Proof of Theorem \ref{thm:cptperturb} $(ii)$]
According to Proposition \ref{prop:maximal solution} $(ii)$, the stationary problem
admits a minimal positive solution $\underline u$. 
We also know from the proof of that
proposition that this minimal solution is obtained as the long-time limit
of a solution to~\eqref{eq:general reaction-diffusion}
with an initial datum of the form $\e\vp'$, for any $\e>0$ smaller than some $\e_0$,
where $\vp'$ is a continuous function positive
on some smooth domain $D'\subset\subset D$ and vanishing outside.
%
Since $\eta(1,\cdot)>0$ in $D$ by the strong maximum principle,
we can take $\e\in(0,\e_0)$ small enough so that $\e\vp'<\eta(1,\cdot)$ 
in $D'$.
We then get by comparison:
\[\liminf_{t\to+\infty}\eta(t,x)\geq \ul u(x),\]
locally uniformly for $x\in\bar D$.

On the other hand, consider the solution $\overline\eta$ 
to~\eqref{eq:general reaction-diffusion} with initial datum
$\max(\eta_0,u)$. Since $\overline\eta(0,\.)-u$ 
has compact support in $\bar D$, Corollary \ref{cor:stability}
yields $\overline\eta(t,x)\to u(x)$ as $t\to+\infty$, locally uniformly in $x\in\bar D$. It then follows by comparison:
\[\limsup_{t\to+\infty}\eta(t,x)\leq \lim_{t\to+\infty}\overline\eta(t,x)
\leq u(x),\]
locally uniformly for $x\in\bar D$.
The proof is concluded.
\end{proof}

\begin{proof}[Proof of Theorem \ref{thm:oscillating}]
Under both assumptions $a)$ and $b)$ we have that 
the stationary problem \eqref{s-a} admits a positive solution,
hence by Proposition \ref{prop:maximal solution} $(i)$
it admits a maximal positive solution $v$. 
In the case $a)$ there is another positive solution of \eqref{s-a}
which is smaller than $v$, that we call $u$, 
while in the case $b)$ we set $u\equiv0$. 




The initial datum of the non-convergent solution $\eta$ we construct 
will be of the form
\[
\eta_0(x):=
\begin{cases}
v(x) & \text{if $x\in D\cap B_{S_0}$}\\
u(x) & \text{if $x\in D\cap B_{R_n}\setminus B_{S_{n-1}}$
\ with $n\in\N$} \\
v(x) & \text{if $x\in D\cap B_{S_n}\setminus B_{R_n}$
\ with $n\in\N$},
\end{cases}
\]
for some suitable positive sequences
$(R_n)_{n\in\N}$, $(S_n)_{n\in\N}$  diverging to $+\infty$ and~satisfying
\[
\forall n\geq1,\qquad
n-1\leq S_{n-1}<R_n<S_n.
\]

In order to construct the sequences $(R_n)_{n\in\N}$, $(S_n)_{n\in\N}$,
we define, for $n\in\N$, 
\[
v^{n}(x):=
\begin{cases}
\eta_0(x) & \text{if $x\in D\cap B_{R_n}$} \\
v(x) & \text{if $x\in D\setminus B_{R_n}$}
\end{cases}
\]
and
\[
u^{n}(x):=
\begin{cases}
\eta_0(x) & \text{if $x\in D\cap B_{S_n}$} \\
u(x) & \text{if $x\in D\setminus B_{S_n}$},
\end{cases}
\]
and call $\overline \eta^n$, $\underline \eta^n$ the solutions of~\eqref{eq:general reaction-diffusion}
with initial data $v^n$, $u^n$ respectively.
By comparison we have 
\Fi{u<eta<v}
u(x)\leq \underline \eta^n(t,x)\leq \eta(t,x)
\leq \overline \eta^n(t,x)\leq v(x)
\quad\text{ for all }\; t\geq0,\ x\in D.
\Ff
Moreover $\overline \eta^{n}(t,x)\to v(x)$ and $\underline\eta^{n}(t,x)\to u(x)$
 as $t\to+\infty$, locally uniformly in $\bar D$, owing to
Corollary \ref{cor:stability} and Theorem~\ref{thm:cptperturb} $(i)$
in the case $b)$.

We claim that $(R_n)_{n\in\N}$, $(S_n)_{n\in\N}$ can be chosen in such a way that
\Fi{eta-u}
\sup_{x\in D\cap B_n}
\big(\overline \eta^n(\tau_n,x)-u(x)\big)\leq\frac{\|v\|_{L^\infty(D)}}n
\Ff
and 
\Fi{v-eta}
\sup_{x\in D\cap B_n}
\big(v(x)-\underline \eta^n(\sigma_n,x)\big)\leq\frac{\|v\|_{L^\infty(D)}}n,
\Ff
for some $\tau_n,\sigma_n$ satisfying
\[\sigma_n\geq\tau_n\geq n.\]
We prove the claim by induction on $n$.
The properties \eqref{eta-u}-\eqref{v-eta} automatically hold
for $n=1$, for any choice of $S_0,R_1,S_1$ and any $\tau_1,\sigma_1$.
Suppose that we have constructed the 
$R_j,S_j,\tau_j,\sigma_j$ with the desired properties up to some $j=n$.
Since $\underline \eta^{n}\to u$ as $t\to+\infty$, locally uniformly in $\bar D$,
we can find $\tau_{n+1}$ large enough, and in particular larger than
$n+1$, so that
\[
\sup_{x\in D\cap B_{n+1}}
\big(\underline \eta^n(\tau_{n+1},x)-u(x)\big)
<\frac{\|v\|_{L^\infty(D)}}{n+1}.
\]
Observe that the initial datum $v^{n+1}$ can be rewritten as
\[
v^{n+1}(x)=
\begin{cases}
u^n(x) & \text{if $x\in D\cap B_{R_{n+1}}$} \\
v(x) & \text{if $x\in D\setminus B_{R_{n+1}}$}. \\
\end{cases}
\]
Thus parabolic estimates show that the emerging
solution $\overline \eta_{n+1}$
converges to the solution $\underline \eta^n$ (emerging from $u^n$)
as $R_{n+1}\to+\infty$,
locally uniformly in time and space, whence for $R_{n+1}$ sufficiently large
there holds
\[
\sup_{x\in D\cap B_{n+1}}
\big(\overline \eta_{n+1}(\tau_{n+1},x)-u(x)\big)\leq
\frac{\|v\|_{L^\infty(D)}}{n+1},
\]
which is the desired property \eqref{eta-u} for $n+1$.
We can assume without loss of generality that $R_{n+1}>S_n$.

Analogously, using first that
$\overline \eta^{n+1}\to v$ as $t\to+\infty$, locally uniformly in $\bar D$,
we can find $\sigma_{n+1}\geq n+1$ in such a way 
that 
\[
\sup_{x\in D\cap B_{n+1}}
\big(v-\overline \eta^{n+1}(\sigma_{n+1},x)\big)<\frac{\|v\|_{L^\infty(D)}}{n+1}.
\]
Next, since 
\[
u^{n+1}(x)=
\begin{cases}
v^{n+1}(x) & \text{if $x\in D\cap B_{S_{n+1}}$} \\
u(x) & \text{if $x\in D\setminus B_{S_{n+1}}$}, \\
\end{cases}
\]
we can eventually choose $S_{n+1}$ large enough in such a way that
\eqref{v-eta} holds for $n+1$, and it is not restrictive to assume that $S_{n+1}>\max(R_{n+1},n+1)$.

We have therefore proved the claim.
Thus, we deduce from \eqref{u<eta<v} and \eqref{eta-u}-\eqref{v-eta} that
$\eta(\tau_n,x)\to u(x)$ and $\eta(\sigma_n,x)\to v(x)$ as $n\to\infty$,
locally uniformly in $x\in D$, which concludes the proof.
\end{proof}

\section{Proof of the results on the 
recurrence of BBM}\label{section:probabilistic proofs}

We recall that we consider a ($\sqrt{2}$ diffusivity) branching Brownian motion (BBM) $\bar X_t=(X^1_t,\ldots,X^{N_t}_t)$ on a domain $D\subseteq \Rm^d$, with branching mechanism specified by the branching rate $r(x)$ and offspring distribution $(p_n(x):0\leq n\leq n_{\max})$ ($n_{\max}<\infty$), satisfying the standing assumption, Assumption \ref{assum:standing prob}, which we restate here. 
\begin{assum}\label{assum:standing prob proof}
The domain $D\subseteq \Rm^d$ is non-empty, open, and connected. The branch rate $r(x)$ and offspring distribution $(p_n(x),\; 0\leq n\leq n_{\max})$  ($n_{\max}<\infty$) are locally H{\"o}lder continuous and satisfy \eqref{eq:pn equiv 1} and \eqref{eq:non-trivial branching}, namely
\[
\sum_{n=0}^{n_{max}}p_n(x)\equiv 1\quad \text{and}\quad \lvert \{x\in D:r(x)\sum_{n=2}^{n_{\max}}p_n(x)>0\}\rvert>0.
\]
\end{assum}

Note that we are imposing no assumptions on the boundary regularity, and the offspring distribution is an arbitrary bounded position-dependent offspring distribution. Due to the former, Dirichlet boundary conditions will be formulated in terms of barrier functions as in Section \ref{section:Barrier}. We shall rely on the results of \cite{Englander2004} and \cite{Maillard2025}. We note that the former restricts its attention to purely binary branching, whilst the latter imposes a $C^{1,1}$ boundary (to stay within the framework of \cite{BR4}, and for straightforwardness). Notwithstanding this, one can check that the proofs of all the results we will employ from \cite{Englander2004,Maillard2025} work under Assumption \ref{assum:standing prob proof}, with the additional ingredient of Proposition \ref{prop:xi vanishes a.s.}.

We firstly recall \cite[Eqns. (20) and (24)]{Maillard2025} that 
\begin{equation}\label{eq:pve somewhere everywhere equiv pf}
\begin{split}
&\text{$\Pm_x(\glSurv)>0$ or $\Pm_x(\locSurv)>0$ for some $x\in D$ if and only if}\\
&\text{$\Pm_x(\glSurv)>0$ (respectively $\Pm_x(\locSurv)>0$) for all $x\in D$.}
\end{split}
\end{equation}

We associate to the branch rate $r(x)$ and offspring distribution $(p_n(x):0\leq n\leq n_{\max})$ the nonlinear term
\begin{equation}\label{eq:nonlinearity f}
f(x,u):=r(x)\sum_{n=0}^{n_{\max}}p_n(x)[(1-u)-(1-u)^n].
\end{equation}
For instance associated to binary branching at rate $1$ (i.e. $r\equiv 1$, $p_2\equiv 1$, $p_n\equiv 0$ for $n\neq 2$) \commentout{\BS
i.e. $r=1$, $p_i=1$ for $i=2$ and $=0$ otherwise, right?
But then $v=1-u$ satisfies KPP, not $u$} we have the classical FKPP nonlinearity $u-u^2$.
Then Assumption \ref{assum:standing prob proof} ensures that the domain $D$, reaction term given by \eqref{eq:nonlinearity f}, and $a(x)\equiv\text{Id}$ satisfy Assumption \ref{assum:standing} (with the condition $f\in C^1(D\times \Rm_{\geq 0})$ replaced as in Remark \ref{rmk:generalising f Holder}).

Our proof will hinge on the connection (see \cite[Section 4]{Maillard2025}) between the survival of BBM and steady states of the FKPP equation, 
\Fi{s-a prob proof}
\left\{\begin{array}{ll}
\Delta u+f(x,u)=0, & x\in D,\\
u(x)\overset{\xi}{=}0, & x\in\partial D,
\end{array}\right.
\Ff 
where $f$ is given by \eqref{eq:nonlinearity f}. We now recall this connection, \cite[Theorems 4.15 and 4.17]{Maillard2025}.
\begin{theo}[Theorem 4.15 and 4.17 of \cite{Maillard2025}]\label{theo:PDE survival connection}
We assume Assumption \ref{assum:standing prob proof}. Then the following hold.
\begin{enumerate}
\item 
We have $\Pm_x(\glSurv)>0$ for some (hence all) $x\in D$ if and only if there exists a positive and bounded solution of \eqref{s-a prob proof}, where $f$ is given by \eqref{eq:nonlinearity f}. In the case that this exists then
\[
v(x):=\Pm_x(\glSurv)
\]
is the maximal positive and bounded solution of \eqref{s-a prob proof}, in the sense that $\tilde{u}(x)\leq v(x)$ for any other positive and bounded solution $\tilde{u}$ of \eqref{s-a prob proof}.
\item
We now suppose that $\Pm_x(\locSurv)>0$ for some (hence all) $x\in D$. Then
\[
u(x):=\Pm_x(\locSurv)
\]
is a positive and bounded solution of \eqref{s-a prob proof}, which is minimal in the sense that
\[
u(x)\leq \tilde{u}(x)
\]
for any other positive and bounded solution $\tilde{u}$ of \eqref{s-a prob proof}.
\end{enumerate}
\end{theo}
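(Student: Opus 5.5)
The statement is Theorem 4.15 and 4.17 of \cite{Maillard2025}. The plan is to transfer that proof to the present setting: general open $D$ with no boundary regularity, Dirichlet condition in the $\overset{\xi}{=}$ sense, and bounded, position-dependent offspring distributions. The engine is the standard duality between BBM and the FKPP equation. For $u_0:D\to[0,1]$ set
\[
U(t,x):=1-\expE_x\Big[\prod_{i=1}^{N_t}\big(1-u_0(X^i_t)\big)\Big].
\]
A first-branching-event decomposition together with the Feynman--Kac formula shows that $U$ solves the parabolic problem \eqref{eq:parabolic equation} with reaction \eqref{eq:nonlinearity f}. The boundary condition $U\overset{\xi}{=}0$ follows from Proposition \ref{prop:xi vanishes a.s.}: as $\xi(x_n)\ra 0$, the exit time started from $x_n$ tends to $0$ in probability (Remark \ref{rmk:xi barrier function}), so with high probability the whole tree is killed before branching. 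By the uniqueness in Proposition \ref{prop:parabolic equation well-posedness}, this is the solution.

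For part 1, take $u_0\equiv 1$, so that $U(t,x)=\Pm_x(N_t\geq 1)$. This decreases to $v(x)=\Pm_x(\glSurv)$, and by parabolic estimates the limit is a stationary solution of \eqref{s-a prob proof}. The boundary condition for $v$ is inherited from the bound $v\leq U(t,\cdot)$ at a fixed $t$. If $v\not\equiv0$ then $v>0$ by \eqref{eq:pve somewhere everywhere equiv pf}, and it is maximal: any bounded solution $\tilde u$ satisfies $\tilde u\leq 1$, so by comparison $\tilde u\leq U(t,\cdot)\ra v$. Conversely, if a positive bounded solution $\tilde u$ exists, then $\tilde u\leq U(t,\cdot)$ for every $t$, hence $v\geq\tilde u>0$. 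The bound $\tilde u\leq 1$ holds because $f(x,s)<0$ for $s>1$, by the same ODE comparison as in Proposition \ref{prop:maximal solution}$(i)$.

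For part 2, $u(x)=\Pm_x(\locSurv)$ satisfies the Markov/branching identity
\[
1-u(x)=\expE_x\Big[\prod_i\big(1-u(X^i_t)\big)\Big],
\]
since $\locSurv$ is a tail event and the subtrees of distinct particles are independent. So $u$ is a fixed point of the semigroup and hence a stationary solution; its boundary behaviour follows from $u\leq v$. Minimality is the step I expect to be hardest. The plan is to fix $V\subset\subset D$ and a positive bounded solution $\tilde u$, and use the martingale $M_t=\prod_i(1-\tilde u(X^i_t))$. It is bounded, so it converges almost surely, and its expectation is $1-\tilde u(x)$. On $\locSurv$ particles return to $V$ at arbitrarily large times, and $\inf_V\tilde u>0$, so at those times the factor coming from the particle in $V$ keeps $M_t$ bounded away from $1$. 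Combining this with a Lévy 0--1 argument should give $\lim M_t=0$ almost surely on $\locSurv$. On the complement one needs $\lim M_t\leq 1$, which is trivial. Then
\[
1-\tilde u(x)=\expE_x[\lim M_t]\leq \Pm_x(\locSurv^c)=1-u(x),
\]
so $u\leq\tilde u$.

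The delicate point is the almost sure statement $\lim M_t=0$ on $\locSurv$. One has to check that repeated visits to $V$ force infinitely many independent chances for $M$ to drop, which uses the conditional Borel--Cantelli lemma already invoked after Definition \ref{defin:local suvival}. Throughout, every place where \cite{Maillard2025} used $C^{1,1}$ boundary regularity must be replaced by Proposition \ref{prop:xi vanishes a.s.}.
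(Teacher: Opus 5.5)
Your overall route is the paper's: it simply cites the two theorems of Maillard–Tough and notes that their proofs carry over to arbitrary open $D$ once Proposition~\ref{prop:xi vanishes a.s.} is available. (That proposition is not what gives $U\overset{\xi}{=}0$; Remark~\ref{rmk:xi barrier function} suffices for that, as you say. It enters through the uniqueness in Proposition~\ref{prop:parabolic equation well-posedness}, i.e.\ in identifying a $\xi$-Dirichlet solution with its probabilistic representation. You need that identification both for the comparison in part 1 and for $M_t$ to be a martingale.) Two steps of your reconstruction, however, do not work as written.

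The main gap is the claim $\lim_t M_t=0$ on $\locSurv$. Returns to $V$ with $\delta=\inf_V\tilde u>0$ only give $M_t\le 1-\delta$ at arbitrarily large times, hence $M_\infty\le1-\delta$ on $\locSurv$. The L\'evy $0$--$1$ law cannot upgrade this by itself, because nothing you wrote uses branching, and without branching the conclusion is false. Take planar Brownian motion, $r\equiv0$, $D=\R^2$: then $\Pm_x(\locSurv)=1$, every constant $c\in(0,1)$ is a solution, and $M_t\equiv1-c$. Your closing remark about conditional Borel--Cantelli points the right way, but the events it must be applied to are ones where \eqref{eq:non-trivial branching} puts \emph{many particles in $V$ at once}.

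Concretely, $r\sum_{n\ge2}p_n$ is continuous, so $\{r\sum_{n\ge2}p_n>0\}$ is open and non-empty. Take $V'\subset\subset V\subset\subset D$ with $r\sum_{n\ge2}p_n\ge\beta>0$ on $\bar V$; local survival does not depend on the chosen set, up to null sets. For each $m$ there is $p_m>0$ such that, from any $y\in\bar V'$, with probability at least $p_m$ there are at least $m$ particles in $V'$ one time unit later. Let $\sigma_k$ be the first time $t\ge\sigma_{k-1}+1$ at which some particle lies in $\bar V'$. By the strong Markov and branching properties, the event ``at least $m$ particles in $V'$ at time $\sigma_k+1$'' has conditional probability at least $p_m$ given $\mathcal F_{\sigma_k}$. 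L\'evy's conditional Borel--Cantelli lemma then shows it occurs infinitely often a.s.\ on $\locSurv$. Since every factor of $M_t$ lies in $[0,1]$, this gives $\liminf_t M_t\le(1-\delta')^m$ for every $m$, where $\delta'=\inf_{\bar V'}\tilde u$. Hence $M_\infty=0$ on $\locSurv$, and your final inequality $1-\tilde u(x)=\expE_x[M_\infty]\le\Pm_x(\locSurv^c)$ goes through.

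The second problem is the bound $\tilde u\le1$ in part 1. It is false that $f(x,s)<0$ for $s>1$: for odd $n\ge3$, $(1-s)-(1-s)^n>0$ once $s>2$. With $r\equiv1$, $p_3\equiv1$ one gets $f(s)=s(1-s)(2-s)$, so $\tilde u\equiv2$ is a positive bounded solution on $D=\R^d$ lying above $v\le1$. Even for $n_{\max}\le2$, the ODE comparison of Proposition~\ref{prop:maximal solution}$(i)$ needs $\sup_x f(x,s)<0$, i.e.\ $r(p_0+p_2)$ bounded below, which is not assumed. For instance, with $p_2\equiv1$, $d\ge3$ and $r$ compactly supported, $1+\expE_x\big[e^{-3\int_0^\infty r(B_s)\,ds}\big]$ and $2$ are ordered sub- and supersolutions, giving a solution $>1$. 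So this step cannot be repaired: maximality only holds among $[0,1]$-valued solutions. You should work in that class from the start; it is also the class in which $1-\expE_x[\prod_i(1-\tilde u(X^i_t))]$ represents the solution and $M_t$ is a bounded martingale. The paper's assertion that \eqref{eq:nonlinearity f} satisfies \eqref{S} has the same defect.
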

\begin{proof}[Proof of Theorem \ref{theo:basic facts survival}]
We firstly observe that Theorem \ref{theo:PDE survival connection} combined with the strong maximum principle implies that either $\Pm_x(\locSurv)<\Pm_x(\glSurv)$ everywhere, or $\Pm_x(\locSurv)=\Pm_x(\glSurv)$ everywhere. Combined with similar facts we have already seen about $\Pm_x(\glSurv)$ and $\Pm_x(\locSurv)$, this completes the proof of Part \ref{enum:strict pve or 0 everywhere}.

We recall from \eqref{eq:L prob} that we define
\[
\L=\Delta+c\quad\text{where}\quad c(x):=r(x)\sum_{n=0}^{n_{\max}}p_n(x)(n-1).
\]
Note that $c(x)$ is $f_s(x,0)$. We employ the following generalised principal eigenvalues\footnote{Recall again that opposing sign conventions for $\lambda$ are employed in PDE and probability, so that whereas the following is the definition common in PDEs, which we use here, the typical definition employed in probability differs from these by a factor of $-1$.}
\[
\begin{split}
&\lambda(-\L):=\sup\{\lambda\in \Rm:\exists u\in W^{2,d}_{\text{loc}}(D)\text{ such that $-\L u\geq \lambda u$, $u(x)>0$ for all $x\in D$}\},\\
&\lambda'(-\L):=\inf\{\lambda\in\Rm:\exists u\in W^{2,d}_{\text{loc}}(D)\cap L^{\infty}(D)\text{ such that $-\L u \leq \lambda u,$}\\&\text{$u(x)>0$ for all $x\in D$, $u(x)\overset{\xi}{=} 0,$ $x\in \partial D$.}\}.
\end{split}
\]

The latter eigenvalue was introduced by Berestycki and the first author in \cite{BR4}, along with a third generalised principal eigenvalue, $\lambda''(-\L)$. It was shown that $\lambda=\lambda$ (in the present self-adjoint setting), under stronger ($C^{1,1}$) assumptions on the boundary. Recently, Maillard and the second author established that $\lambda(-\L)=\lambda'(-\L)=\lambda''(-\L)$ \cite[Theorem 4.14]{Maillard2025}, the proof of which clearly works without imposing assumptions on the boundary. Combined with \cite[Theorem 3]{Englander2004} and \cite[Theorems 4.5]{Maillard2025}, Theorem \ref{theo:basic facts survival} is proven.
\end{proof}

\begin{proof}[Proof of Theorem \ref{theo:BBM no glob surv without loc surv}]
We recall that we also assume in Theorem \ref{theo:BBM no glob surv without loc surv} that $ \inf_{x\in D}r(x)\sum_{n=2}^{n_{\max}}p_n(x)>0$. This implies that $f$ satisfies \eqref{eq:existence condition} and \eqref{concav1} with $\ell=1$. 

Since $d\leq 6=2+4/\ell$, Theorem \ref{theo:nonexistence at criticality} implies the non-existence of solutions to \eqref{s-a prob proof} when $\lambda(-\L)\geq 0$, hence $\Pm_x(\glSurv\setminus \locSurv)\leq \Pm_x(\glSurv)\equiv 0$ by Theorem \ref{theo:PDE survival connection}.

Similarly, Theorem \ref{theo:uniqueness} implies the uniqueness of solutions to \eqref{s-a prob proof}, hence for $\lambda(-\L)<0$, $\Pm_x(\glSurv\setminus\locSurv)=\Pm_x(\glSurv)-\Pm_x(\locSurv)\equiv 0$ by Theorem \ref{theo:PDE survival connection}.
\end{proof}

\begin{proof}[Proof of Theorem \ref{theo:counterexample prob}]
This follows immediately by combining Theorems \ref{theo:basic facts survival}, \ref{theo:bound on solns l=1}, \ref{thm:Hardy_spercritical} and \ref{theo:PDE survival connection}, checking that the construction in the proof of Statement \eqref{enum:evalue<=0 and both possible} of Theorem \ref{thm:Hardy_spercritical} still gives $\lambda(-\L)=\lambda(-(\Delta+1-\max(1-V,0)))<0$ with the `max'.
\end{proof}
\begin{proof}[Proof of Theorem \ref{theo:gl surv loc surv inhom D}]
This follows immediately by combining Theorems \ref{theo:existence nonuniq d geq 7} and~\ref{theo:PDE survival connection}.\end{proof}

\section{Homogeneous counterexample}\label{section:homogeneous counterexample}

We now prove Theorem \ref{theo:existence nonuniq d geq 7}, that is we  establish the existence of a domain on which we have non-uniqueness for the (homogeneous) classical FKPP nonlinearity $u-u^2$. We will do this by the probabilistic method: we will construct a random domain which has multiple positive and bounded FKPP steady states with positive probability. The probabilistic connection in Section \ref{section:probabilistic proofs} will be essential here.

Given a ($\sqrt{2}$-diffusivity) Brownian motion $(X_t)_{t\geq 0}$ and $r>0$, the Wiener sausage is the $r$-neighbourhood of $(X_t)_{t\geq 0}$. A classical theorem of Kesten, Spitzer and Whitman (see \cite{LeGall1988}) implies that, for any $d\geq 3$, there exists a constant $\kappa>0$ depending only on the dimension such that, for any $t\in \Rm_{>0}$, the Wiener sausage satisfies
\begin{equation}\label{eq:Wiener sausage convergence}
\text{Vol}[B((X_s:0\leq s\leq t),r)]/r^2\ra \kappa\quad \text{in probability as $r\ra 0$.}
\end{equation}

We now suppose that $d\geq 7$, so that $2d-8<H_d$. Given positive and increasing $R_n\uparrow \infty$ to be determined, we define $\Gamma_0:=B(0,R_0)$ and $\Gamma_n:=B(0,R_n)\setminus B(0,R_{n-1})$ for $n\geq 1$. We now fix $2d-8<c_1<c_2<H_d$ and sequences $(R_n)$ and $(a_n)$ such that
\[
\min(c_1\lvert x\rvert^{-2},1)\leq  \sum_{n\geq 0}a_n\Ind_{\Gamma_n}(x)\leq \min(c_2\lvert x\rvert^{-2},1).
\]
Given a positive sequence $r_n\ra 0$ to be determined, we take a Poisson point process $\{\chi_i:i\geq 1\}$ of intensity
\[
V(x):=\sum_{n\geq 0}\Ind_{\Gamma_n}(x)\frac{(1-a_n)}{\kappa r_n^2}.
\]
We define the random set $D$ to be the unique unbounded connected component of 
\[
E:=\left[\cup_{n\geq 0}\cup_{\chi_i\in \Gamma_n}B(\chi_i,r_n^2)\right]^c,
\]
(which we will show is unique for suitable $(r_n)$). Since no two balls are tangential with probability $1$, $\partial E$ is almost surely locally Lipschitz, hence we can interpret Dirichlet boundary conditions on $\partial E$ in the classical way, almost surely. Precisely, in our construction $E$ will almost surely have a unique unbounded connected component, $D$, and we consider positive and bounded solutions of
\begin{equation}\label{eq:stationary FKPP random D}
    \Delta u+u-u^2-0,\quad \text{on $D$},\quad u\equiv 0\quad\text{on $\partial D$.}
\end{equation}
\begin{theo}\label{theo:random solution}
Suppose that $d\geq 7$. Then there exists a positive sequence $r_n\ra 0$ such that:
\begin{enumerate}
    \item with probability $1$, $E$ has a unique unbounded connected componet, $D$;
    \item $\lambda(-(\Delta+1);D)\leq 0$ and $\Vol(D^c)<\infty$, with probability $1$;
    \item with probability $1$, either there exists a solution of \eqref{eq:stationary FKPP random D} and $\lambda(-(\Delta+1);D)= 0$, or there are at least two solutions of \eqref{eq:stationary FKPP random D} (or both);\label{enum:drift to infinity}
    \item every pair of balls are a positive distance apart (so that $D=E$ and $\partial D$ is locally analytic) and $\lambda(-(\Delta+1);D)<0$, with positive probability.
\end{enumerate}
\end{theo}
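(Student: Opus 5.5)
The plan is to choose $r_n\to0$ so fast that, at scale $\Gamma_n$, the Poisson obstacles act like a homogenised killing potential. The Brownian motion on $D$ (with the obstacles) should then behave, at large scales, like Brownian motion on $\Rm^d$ with killing rate $1-a_n$ on $\Gamma_n$. The FKPP equation on $D$ then mimics $\Delta u+\big(\sum_n a_n\Ind_{\Gamma_n}\big)u-u^2=0$ on $\Rm^d$. By the choice $2d-8<c_1<c_2<H_d$, this effective potential is sandwiched between $\min(c_1|x|^{-2},1)$ and $\min(c_2|x|^{-2},1)$. That is exactly the regime of Theorem \ref{thm:Hardy_spercritical} with $\ell=1$: there are infinitely many solutions, the maximal one decays like $|x|^{-2}$, and the minimal one, when it exists, decays like $|x|^{a_*}$ with $a_*<-2$.

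\textbf{Step 1 (items 1 and 2).} The expected number of obstacles in $\Gamma_n$ is $(1-a_n)|\Gamma_n|/(\kappa r_n^2)$, and their total volume is of order $r_n^{2d-2}|\Gamma_n|$. Choosing $r_n$ small makes $\sum_n$ of this volume finite, so $\Vol(D^c)<\infty$. A Borel--Cantelli argument on the event that two balls of radius $r_n^2$ in $\Gamma_n$ come within distance $r_n^2$ of each other gives that only finitely many such near-pairs occur. Hence the obstacles form a.s.\ finitely many bounded clusters, so the complement has a unique unbounded component. The same estimate, with the finitely many relevant probabilities made smaller than $1$, gives item 4's separation with positive probability. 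Then $\lambda(-(\Delta+1);D)\le0$ follows by comparing with a large ball free of obstacles, which exists since the obstacle volume is finite: $\lambda(-(\Delta+1);B)<0$ for a large ball $B$ away from the obstacles.

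\textbf{Step 2 (homogenisation estimates).} For each $n$, condition on a large slab of time spent in $\Gamma_n$. Using \eqref{eq:Wiener sausage convergence}, a Brownian path on $[0,t]$ sweeps volume $\approx\kappa r^2t$ for a ball of radius $r$. Note that the obstacles have radius $r_n^2$ while the sausage normalisation uses $r$, so I would recheck exponents and possibly redefine the intensity to match. The survival probability against a Poisson field of intensity $V$ is then $\exp(-V\cdot\text{sausage volume})\approx\exp(-(1-a_n)t)$. Taking $r_n\to0$ quickly, uniformly over the compact time windows relevant at scale $R_n$, the annealed killing is within $\epsilon_n$ of the potential $1-a_n$. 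I then need a quenched statement. For this I would pass through the BBM via Theorem \ref{theo:PDE survival connection}, and construct comparison sub- and supersolutions on $D$. These would be $\max(v,0)$ and $Cw$ from the proof of Theorem \ref{thm:Hardy_spercritical}, built with $c_1$ and $c_2$ respectively and perturbed by $\epsilon_n$, which is harmless since $c_1<c_2$ leaves slack. They give, with probability one, $\Pm_x(\glSurv)\ge\eta|x|^{-2}$ for large $|x|$. They also give that any locally surviving population has $\Pm_x(\locSurv)=O(|x|^{a_*+\delta})$, with $a_*+\delta<-2$.

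\textbf{Step 3 (item 3 and item 4's eigenvalue).} Suppose a solution exists. If $\lambda(-(\Delta+1);D)<0$, then the minimal and maximal solutions have different decay rates by Step 2, so there are at least two solutions. If $\lambda=0$, we are in the first alternative. Existence itself comes from the global-survival lower bound in Step 2. For item 4, on the positive-probability event of separated balls, I would additionally require a large obstacle-free ball, as in Theorem \ref{thm:Hardy_spercritical}(i), which forces $\lambda<0$. By Theorem \ref{theo:basic facts survival} this yields local survival, and the decay gap then gives two solutions.

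\textbf{Main obstacle.} The hard part is Step 2: converting the annealed Wiener-sausage asymptotics into quenched, a.s.\ two-sided comparison of the obstacle Dirichlet problem with the Hardy-type potential at all large scales simultaneously. I would handle it by a Borel--Cantelli over scales $n$, using concentration of Poisson counts in mesoscopic cubes. The error at scale $n$ would be absorbed into the gap between $c_1$, $c_2$ and the thresholds $2d-8$ and $H_d$.
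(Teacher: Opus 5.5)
\textbf{Gap: Step 2 is the whole theorem, and your barriers do not transfer to $D$.} Step 2 is the entire content of item 3, and the route you sketch for it does not work as stated.

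The barriers from the proof of Theorem \ref{thm:Hardy_spercritical} are built for $\Delta+V$ on $\Rm^d$, with a genuine potential. On $D$, by contrast, the operator is $\Delta+1$ and all the killing comes from the Dirichlet condition on the holes. This breaks both barriers:
\begin{itemize}
\item $\max(v,0)$ satisfies the interior inequality in $D$ (since $1\ge V$), but it is positive on holes at every distance, so it is not a subsolution of the Dirichlet problem.
\item $Cw$ fails the interior inequality, since $\Delta(Cw)+Cw-(Cw)^2>0$ wherever $Cw$ is small.
\end{itemize}
The effective potential $1-a_n$ is not present pointwise in $D$. It only emerges through capacitary correctors (oscillating test functions vanishing on the holes), as a weak limit of singular measures on the hole boundaries. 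So ``perturbing by $\epsilon_n$'' does not produce barriers. Making this quenched, almost sure and two-sided at all scales simultaneously is a stochastic homogenisation theorem that your proposal does not supply. That would include controlling cross terms such as $\nabla w\cdot\nabla\psi$ near holes, and atypical clusters of holes. Note also that a pointwise bound $\Pm_x(\glSurv)\ge\eta|x|^{-2}$ for all large $|x|$ is false, because $\Pm_x(\glSurv)\to0$ at every hole.

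\textbf{What the paper does instead.} Your annealed computation (survival against the Poisson field $\approx e^{-(1-a_n)t}$) is the right heuristic; the paper's point is that it never needs to be quenched.
\begin{itemize}
\item The coupling: lifetime by lifetime, the killing clocks of a BBM $X$ on $\Rm^d$ with potential $\sum_n a_n\Ind_{\Gamma_n}$ are coupled to Poisson obstacles placed in the fresh part of the Wiener sausage of the corresponding lifetime of a free BBM $Y$. Poisson means are matched via \eqref{eq:Wiener sausage convergence}.
\item The bad events are mean mismatches, and $Y^v$ hitting obstacles planted along an earlier non-parent lifetime. The latter is negligible because independent Brownian paths do not meet in $d\ge4$. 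Together the bad events have probability $O(\epsilon)$.
\item By the whole-space results and monotonicity in the potential, $X$ survives globally but not locally with positive probability. The coupling therefore gives $\Pm\otimes\tilde\Pm(\glSurv\setminus\locSurv)>0$ for $Y$ on $D$.
\end{itemize}

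\textbf{The missing idea.} The decisive step is upgrading this annealed statement to an almost sure one. The set $\{\omega:\tilde\Pm_x(\glSurv\setminus\locSurv)>0\}$ is a tail event of the obstacle field, so Kolmogorov's zero--one law makes it almost sure. Theorems \ref{theo:PDE survival connection} and \ref{theo:basic facts survival} then give item 3 with no decay estimates on $D$ at all.

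\textbf{Item 2.} The same argument repairs your item 2. Finite obstacle volume does not yield a large obstacle-free ball.
\begin{itemize}
\item The centres have intensity $(1-a_n)/(\kappa r_n^2)\to\infty$, so far-out macroscopic balls contain obstacles with overwhelming probability.
\item The obstacle-free core $\Gamma_0$ (where $a_0=1$) is too small. Since $\min(c_2|x|^{-2},1)\equiv1$ on $\Gamma_0$, Hardy's inequality forces $\lambda(-(\Delta+1);\Gamma_0)\ge0$.
\end{itemize}
So $\lambda(-(\Delta+1);D)\le0$ almost surely should instead be read off from almost-sure global survival via Theorem \ref{theo:basic facts survival}.

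Your remark on exponents is correct: the sausage volume scales like $r^{d-2}t$, and the intensity and radii must be matched to that.
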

This implies the existence of a connected domain with locally analytic boundary supporting multiple positive and bounded solutions of \eqref{eq:stationary FKPP random D}.
\begin{proof}[Proof of Theorem \ref{theo:random solution}]
We fix $\epsilon>0$. We will couple (1) a BBM with constant branching rate $1$ and position-dependent killing at rate $1-V(x)$, started at $0$, $X_t=(X^1_t,\ldots,X^{N_t}_t)$; (2) a BBM with branching rate $1$ and no killing, started at $0$, $Y_t=(Y^1_t,\ldots,Y^{M_t}_t)$; and a random domain $D$ given by a Poission point process as above. Our coupling will be such that (A) $Y_t$ and $D$ are independent, and (B) outside of an event of probability at most $\epsilon$, $Y_t$ survives globally on $D$ without surviving locally if and only if $X_t$ does the same on $\Rm^d$.

\textit{The key idea is to couple the Poisson point process of killing times of $X_t$ with the Poisson point process of balls in the construction of $D$.}

We will assume throughout that $r_n<(R_{n-1}-R_{n-2})/10$ for all $n\geq 2$, hence $B(x,r_m)\cap \Gamma_n=\emptyset$ for $x\in \Gamma_m$ and $n+2\leq m$. 

We now need to introduce some notation. We will construct $X_t$ and $Y_t$ one lifetime at a time, a lifetime being a Brownian trajectory between a particle being born and it branching. The particle lifetimes for $Y_t$ are indexed by $\Sigma:=\cup_{m\geq 0}\{1,2\}^m$, and for $X_t$ are indexed by the random subset $\Tm\subseteq \Sigma$ (so $\Tm$ correspond to those particles which are ever born), using Ulam-Harris notation. We order $\Sigma$ in shortlex order, so given $x,y\in \Sigma$, $x<y$ if $\lvert x\rvert <\lvert y\rvert$ or $\lvert x\rvert =\lvert y\rvert$ and $x$ proceeds $y$ lexicographically (i.e. $\emptyset,(1),(2),(11),(12),(21),(22),(111),\ldots$).

We start with $\Tm=\{\emptyset\}$, which we will inductively build. We let $C\subseteq \cup_{m\geq 1}\{1,2\}^m$ be those vertices yet to consider. We proceed inductively as follows. Given $\Tm$ and $C$, we take $v\in C$ to be the least element of $C$ in shortlex order, and add $v$ to $\Tm$. We inductively observe from the following construction that $\Tm$ is now $\{v\}\cup\{w< v:X^w\text{ is ever born}\}$. 

We take a Brownian motion $X_s^v$, started at time $T^v_0$ until the branch time $T^v_1$. This is the lifetime of $Y^v$, which shares the same path. We then define
\begin{align*}
&A_{n,v}:=B((X^0_s:T_0^v\leq s<T_1^v),r_n)\cap \Gamma_n\setminus \cup_{w<v}A_{n,w},\quad    I_{n,v}:=\{s:X^v_s\in \Gamma_n\},\\
&m_{n,v}:=\min\left(\frac{\text{Vol}(A_v^n)}{\kappa r_n^2},a_n\lvert I^v_n\rvert\right),\quad \delta_{n,v}:=\frac{\text{Vol}(A_{n,v})}{\kappa r_n^2}-a_n\lvert I^v_n\rvert
\end{align*}
and write $\delta^{n,v}_{\pm}:=(\delta_{n,v})_{\pm}$ for the positive and negative parts of $\delta_{n,v}$. 

We draw independently from a $\text{Poisson}(m_{n,v})$ random variable, $Z_{n,v}$ say, and (1) choose $Z_{n,v}$ points independently and uniformly at random from $A_{n,v}$, $\{\chi_i^{n,v}:i\geq 1\}$ say; and (2) choose $Z_{n,v}$ `killing times' from $I_{n,v}$ independently and uniformly at random.

If $\delta^{n,v}_+>0$, we add to $\{\chi^{n,v}\}$ an additional Poisson point process of intensity $\delta^{n,v}_+/\lvert A_{n,v}\rvert$ on $A_{n,v}$. We have now constructed for each $n$ the Poisson point process $\{\chi^{n,v}_i:i\geq 1\}$, and further define 
\[
K_{n,v}:=\cup_{n\geq 0}\cup_{i\geq 1}\bar B(\chi^{n,v}_i,r_n),\quad K_v:=\cup_{n\geq 0}K_{n,v}.
\]
Conversely, if $\delta^{n,v}_->0$, we add an additional Poisson point process of `killing times' of intensity $\delta^{n,v}_-/\lvert I_{n,v}\rvert$ on $I_{n,v}$. We have now constructed all `killing times' on $I_{n,v}$, for all $n\geq 0$.

If there are any additional points added in either case, we say the coupling is `bad'. We denote this event by $\mathbb{B}^1_{n,v}$.

If there is some killing time for $v$, for some $n$, then the first such time is defined to be the killing time of $X^v$, $\tau_v$. In this case, no child of $v$ is ever born in $X^v$, hence we remove all descendents of $v$ from $C$, and the part of $Y_t$ descended from $Y^v$ is given by an independent Brownian motion started from time $T_1^v$ at position $Y^v_{T_1^v}$. Conversely, if there is no killing time for $X^v$, then it has the same lifetime as $Y^v$, and we keep $(v1)$ and $(v2)$ in $C$.

We also say the coupling is `bad' if the path of $Y^v$ hits $K_{w}$ for some $w<v$. If the coupling isn't bad, we say it's `good'. We denote the event that the path of $Y^v$ hits $K_w\cap \Gamma_n$ as $\mathbb{B}^2_{n,v,w}$.

The key observation is that, \textit{if the coupling is good, $X^v$ survives to its branching time if and only if $Y^v$ avoids hitting $K_v$}. We repeat this construction inductively until $C=\emptyset$, at which point we have finalised the construction of $\Tm$, and say the whole coupling is `good' if it's good at each step. We denote this by $\mathbb{G}$, and its complement as $\mathbb{B}=\mathbb{G}^c$. Finally, we take a Poisson process of points on $(\cup_{n\geq 0,v\in \Tm}A_{n,v})^c$ of intensity $V(x)$, $\{\chi^c_i:i\geq 1\}$; note $Y_t$ is never within a distance $r_n$ of $\chi^c_i\in \Gamma_n$. We then define 
\[
\{\chi_i\}:=\{\chi^c_i:i\geq 1\}\cup_{n\geq 0,v\in \Tm}\{\chi^{n,v}_i\},\quad E:=\left(\cup_{n\geq 0,v\in \Tm}\cup_i\bar B(\chi^{n,v}_i,r_n)\right)^c.
\]

By construction, we have (A) $Y_t$ and $E$ are independent, and (B) on $\mathbb{G}$, $Y_t$ survives globally on $E$ without surviving locally if and only if $X_t$ does the same on $\Rm^d$. Our goal now is to show $(r_n)$ can be taken ensuring the coupling is good with high probability.

We now fix $\epsilon>0$ to be determined. Since we have almost sure local extinction for $X^v$, there exists $\ell(n)$ increasing in $n$ with $\ell(n)\ra \infty$ as $n\ra\infty$, such that
\[
\Pm(\mathbb{D}_n):=\Pm(\text{$X^v$ or $Y^v$ enters $\bar B_{R_n}$ for some $v\in \Tm$, $\lvert v\rvert> \ell(n)$})\leq \epsilon 2^{-n}\quad\text{for $n\geq 0$.}
\]
\commentout{It also implies that there exists $L_0(\ell)\ra \infty$ as $\ell\ra \infty$ such that 
\[
\Pm(Y^w\in B(0,R_{L_0(\ell)})\text{ for some $\lvert v\rvert=\ell$})<\epsilon 2^{-\ell}$.}

Putting all this together,
\begin{align*}
&\mathbb{B}\subseteq \cup_{n\geq 0,\;v\in \Tm}\mathbb{B}^1_{n,v}\cup_{n\geq 0,\; v,w\in \mathbb{T}:w<v}\mathbb{B}^2_{n,v,w}\cup\cup_{n\geq 0}\mathbb{D}_n\\
&\subseteq \cup_{n\geq 0}\cup_{v\in \Tm:\lvert v\rvert\leq \ell(n+1)}\mathbb{B}^1_{n,v}\cup \cup_{n\geq 0}\cup_{v,w\in \Tm:w<v,\lvert v\rvert\leq \ell(n+1)}\mathbb{B}^2_{n,v,w}\cup\cup_{n\geq 0}\mathbb{D}_n.
\end{align*}
where in the second inclusion we are using that $\mathbb{B}^1_{n,v},\mathbb{B}^2_{n,v,w}\subseteq \mathbb{D}_n$ for $\lvert v\rvert\geq \ell(n+1)$. We now write $p(v)$ for the parent of $v$ (for $\lvert v\rvert\geq 1$), and write $G_v$ for the event that the coupling is good for $v$. We observe that, on $\mathbb{B}^2_{n,v,p(v)}$, then $K_{n,w}\neq \emptyset$ but $X^v$ is not killed, hence the coupling is bad for $p(v)$. Therefore
\begin{align*}
&\mathbb{B}\subseteq \cup_{n\geq 0}\cup_{v\in \Tm:\lvert v\rvert\leq \ell(n+1)}\mathbb{B}^1_{n,v}\cup \cup_{n\geq 0}\cup_{v,w\in \Tm:w<v,w\neq p(v),\lvert v\rvert\leq \ell(n+1)}\mathbb{B}^2_{n,v,w}\cup\cup_{n\geq 0}\mathbb{D}_n\\
&:=\cup_{n\geq 0}\mathbb{B}^1_n\cup \cup_{n\geq 0} \mathbb{B}^2_n\cup \cup_{n\geq 0}\mathbb{D}_n.
\end{align*}

Then using a union bound,
\begin{align*}
\Pm(\mathbb{B})\leq \sum_{n\geq 0}\Pm(\mathbb{B}^1_n)+\sum_{n\geq 0}\Pm(\mathbb{B}^2_n)+2\epsilon.
\end{align*}

We assume, inductively in $m$, that we have a positive decreasing sequence $(\bar r_n)$ such that $\Pm(\mathbb{B}^1_{n}),\Pm(\mathbb{B}^2_{n})<\epsilon 2^{-n}$ for $n\leq m$ whenever $r_n\leq \bar r_n$ for all $n$. Then by decreasing only $\bar r_{m},\bar r_{m+1},\bar r_{m+2}$ we ensure this for $ \Pm(\mathbb{B}^1_{m+1})$ by \eqref{eq:Wiener sausage convergence}, and for $\Pm(\mathbb{B}^2_{m+1})$ since a Brownian motion started away from a second Brownian motion almost surely won't hit it in $d\geq 4$ \cite[Theorem 9.1, p.255]{Morters2010} (note that this is why we worked to exclude the parent of $v$ from the previous union). The proof for the base case $n=0$ is identical.

\commentout{

\begin{align*}
&\Pm(\text{coupling is good})\leq \sum_n\mathcal{E}_n+\sum_{\ell}\mathcal{F}_\ell\\
&:=\sum_n\expE\left[\epsilon 2^{-n}+\sum_{v\in \Tm,\lvert v\rvert\leq \ell(n)}\lvert \delta^v_{n}\rvert\right]\\
&+\expE\left[\sum_{\ell\geq 0}\sum_{n\geq 0}\Ind(Y^v\text{ hits $K_w\cap \Gamma_n$ for some $w<v$, $w\in \Tm$})\right].
\end{align*}
We assume, inductively in $m$, that we have chosen $(\bar r_n)$ such that $\mathcal{E}_n\leq 2\epsilon 2^{-n}$ for $n\leq m$ and $\mathcal{F}_{\ell}\leq \epsilon 2^{-\ell}$ for $\ell\leq m$. Since $\delta^v_{m+1}$ only depends on $r_{m+1}$, and $\sum_{\lvert v\rvert\leq \ell(m+1)}\delta^v_{m+1}\ra 0$ in probability as $r_{m+1}$ independently of the other $r_n$s, decreasing $r_{m+1}$ while keeping the other $r_n$s constant ensures that we have $\mathcal{E}_{n+1}\leq 2\epsilon 2^{-(n+1)}$. Similarly, keeping $r_0,\ldots,r_{L_0(m+1)-3}$ fixed but decreasing the other $r_n$ ensures that $\mathcal{F}_{m+1}\leq 2\epsilon 2^{-(m+1)}$. 

Repeating this process inductively, we get $(r_n)$ such that $\Pm(\text{coupling is good})\leq 4\epsilon$, and this remains the case by any further decreasing of the $r_n$s. 

}

Therefore, by induction, we can take a positive and decreasing sequence $(\bar r_n)$ ensuring that $\Pm(\mathbb{B})\leq 6\epsilon$ whenever $0<r_n<\bar r_n$ for all $n\geq 0$. It is easy to see that the $r_n$s can then be chosen sufficiently small to ensure that $\text{Vol}(E^c)<\infty$ and the number of balls that overlap is finite almost surely with probability $1$ (hence there's a unique connected component of $E$). We henceforth fix such a choice of positive, decreasing $r_n\ra 0$. There is then a positive probability that there are no overlaps (so that $D=E$) and no balls in $B(0,R)$ for $R$ large enough, ensuring that $\lambda(-(\Delta+1;D))=\lambda(-(\Delta+1;E))\leq  \lambda(-(\Delta+1);B(0,R))<0$.

We have left only to check Part \ref{enum:drift to infinity}. Since $\{\chi_i:i\geq 1\}$ and $Y_t$ are independent, we can place them onto separate probability spaces $(\Omega,\mathcal{F},\Pm)$ and $(\tilde{\Omega},\tilde{\mathcal{F}},\tilde{\Pm})$ respectively. Therefore $D$ is $\mathcal{F}$-measurable. We further consider the filtration $(\mathcal{F}_R:R\geq 0)$ defined by $\mathcal{F}_R:=\sigma(\chi_i:\chi_i\in \bar B(0,R))$. We define the event 
\[
F:=\{\omega\in \Omega:\tilde{\Pm}_x(\glSurv\setminus\locSurv)>0\text{ for $x\in D(\omega)$}\}\in \mathcal{F}, 
\]
where $\glSurv\setminus\locSurv$ is the event that $Y_t$ on $D(\omega)$ survives globally but not locally, if considered to be killed on $\partial D$. This is possible for $x\in D$ if and only if it's possible on $D\setminus B(0,R)$ for some $x\in D\setminus B(0,R)$, hence $F$ belongs to the tail $\sigma$-algebra of $(\mathcal{F}_R:R\geq 0)$. Kolmogorov's zero-one law then implies $\Pm(F)\in \{0,1\}$. However, we have previously established that $\Pm\otimes \tilde{\Pm}(\glSurv\setminus \locSurv)>0$, so $\Pm(F)>0$ hence $\Pm(F)=1$. Therefore we have Part \ref{enum:drift to infinity}. The theorem is proven.\end{proof}

\subsection*{Acknowledgements}

This collaboration was initiated at the workshop `Emerging Connections between Reaction-Diffusion, Branching Processes, and Biology' (BIRS 25w5453) at the Banff International Research Station, 18--23 May 2025. We are grateful to the organisers and BIRS for their hospitality. 
This work has received funding from Excellence Project of Sapienza - Dipartimento di Matematica
(CUP B83C23001390001), and from the European Union -- Next Generation EU, PRIN project 2022W58BJ5 ``PDEs and optimal control methods in mean field games, population dynamics and multi-agent models''.

\bibliography{UniquenessFKPP.bib}
\bibliographystyle{plain}

\commentout{

}

\end{document}